\def\yd{y^\delta}
\def\argmin{\text{argmin}}
\newcommand\R{\mathbb{R}}
\newcommand\N{\mathbb{N}}
\def\argmin{\text{argmin}}
\newtheorem{theo}{Theorem}
\newtheorem{lem}{Lemma}
\newtheorem{cor}{Corollary}
\newtheorem{pr}{Proposition}
\newtheorem{df}{Definition}
\newtheorem{rem}{Remark}
\newtheorem{ex}{Example}
\newtheorem{ass}{Assumption}
\newtheorem{prob}{Problem}
\newcommand{\domainf}{{\mathcal{D}(F)}}
\newcommand{\xdag}{{x^\dagger}}
\newcommand{\xd}{{x^\delta_{m,\alpha}}}
\newcommand{\tx}{{\tilde{x}}}
\newcommand\he{H^{1+\varepsilon}(\mathbb{R}_+\times\mathbb{R})}
\begin{document}

\setcounter{footnote}{1}

\title{\bf On the Choice of the Tikhonov Regularization Parameter and the Discretization Level: A Discrepancy-Based Strategy}

\author{V. Albani\thanks{IMPA, Estr. D. Castorina
         110, 22460-320 Rio de Janeiro, Brazil, \href{mailto:vvla@impa.br}{\tt
         vvla@impa.br}}, \,
        A. De Cezaro\thanks{ IMEF/FURG,
         Av. Italia km 8, 96201-900 Rio  Grande, Brasil,\, \href{mailto:decezaro@impa.br}
        {\tt decezaro@impa.br}}\,
                  and \
        J. P.~Zubelli\thanks{IMPA, Estr. D. Castorina 110,
         22460-320 Rio de Janeiro, Brasil,\, \href{mailto:zubelli@impa.br}
        {\tt zubelli@impa.br}}
        }

\date{\today}

\maketitle

\begin{abstract}
We address the classical issue of appropriate choice of the regularization and discretization level for the Tikhonov regularization of an inverse problem with imperfectly measured data.
We focus on the fact that the proper choice of the discretization level in the domain together with the regularization parameter is a key feature in adequate regularization. 
We propose a discrepancy-based choice for these quantities by applying a relaxed version of Morozov's discrepancy principle.
Indeed, we prove the existence of the discretization level and the regularization parameter satisfying such discrepancy. We also prove associated regularizing properties concerning the Tikhonov minimizers. 

\noindent {\bf Key words:} Tikhonov Regularization, Discrete Setting, Regularization
Convergence Rates, Discrepancy Principles.
\end{abstract}

\section{Introduction}\label{sec:introduction}

In many applications, inverse problems are solved under a finite-dimensional and discrete setup with noisy, and sparse data, although the theoretical framework is infinite dimensional. 
See \cite{ern,groetsch,schervar}. Thus, the relation between the finite- and the infinite-dimensional descriptions of the same problem should be well-understood. 
More precisely, it is important to state a criterion to find appropriately the domain discretization level in terms of the available data, in order to find a reliable solution of the inverse problem, which is in general ill-posed.

Thus, under the context of Tikhonov-type regularization, we propose a discrepancy-based rule for choosing appropriately a regularization parameter and a domain  discretization level. We also establish the corresponding regularizing properties of this rule under fairly general assumptions inspired by \cite[Chapter~3]{schervar}.

Several authors have considered discretization as a regularization tool. See \cite{groetsch,gn,kns,ns} and references therein. We go one step further by analyzing the interplay between the regularization parameter and the domain discretization level from a discrepancy principle viewpoint.

Assume that a model is given by the operator 
$F:\domainf\subset X \rightarrow Y$, 
defined in the reflexive Banach spaces $X$ and $Y$, with a convex domain $\mathcal{D}(F)$. Thus, we want to identify the element $x$ in $\domainf\subset X$ that generated, through $F$, the data $y\in\mathcal{R}(F)\subset Y$. In other words, we have the following problem:

\begin{prob}
Given $y\in \mathcal{R}(F)$, find $x\in\domainf$ satisfying:

\begin{equation}
 F(x) = y.
 \label{ip1}
\end{equation}
\label{prob1}
\end{prob}

Problem~\ref{prob1} is an idealization, since it is intrinsically assumed that the data $y$ is perfectly measurable, i.e., there is no uncertainty when measuring $y$. However, in practice we have access only to noisy data in $Y$, denoted by $y^\delta$. Furthermore, in general the inverse of the forward operator is not continuous or not well-defined (ill-posed).

When the statistics of the noise is available and the way it corrupts the data is known, the noisy and the noiseless data are related by:
$$
y^\delta = h(y,e),
$$
where $e$ is the noise, which is given by some random variable and the function $h(\cdot,\cdot\cdot)$ states how the uncertainties corrupts the data. 
See \cite{somersalo}.

\begin{rem}
Let us assume that $X$ and $Y$ are Hilbert spaces \footnote{The case of Banach spaces follows similarly by replacing the scalar product by the dual pairing.}.

\noindent Let the noise be additive, i.e., $h(y,e) = y+e$, and let $e$ be a zero-mean Gaussian random variable with covariance operator $\Sigma$, where $\Sigma:Y\longrightarrow Y$ is a positive-definite and bounded from below linear operator. Then, the noiseless and the noisy data should satisfy
\begin{equation} 
\langle y-\yd,\Sigma^{-1}(y-\yd) \rangle \leq \delta^2,
\label{estdata2}
\end{equation}
with $\delta>0$ and $ \|\Sigma\| \geq 1/\delta^2$.

\noindent By the positiveness and the boundedness of $\Sigma$, it follows that $\langle \cdot,\Sigma^{-1}\cdot\cdot \rangle$ is a scalar product equivalent to the standard one of $Y$. In other words, $Y$ with $\langle \cdot,\Sigma^{-1}\cdot\cdot \rangle$ is also a Hilbert space. Therefore, when we have a zero-mean Gaussian noise with covariance operator $\Sigma$, there is no loss of generality if we assume that $\yd$, $y$ and the noise level $\delta>0$ satisfy:

\begin{equation}
 \|y - \yd\| \leq \delta.
\label{estdata}
\end{equation}
\end{rem}

The presence of noise and the intrinsic ill-posedness of Problem~\ref{prob1} imply that some regularization technique should be addressed to find stable approximate solutions. Thus, we investigate a Tikhonov-type regularization approach. See \cite{ern,schervar}. More precisely, we analyze:

\begin{prob}
Find a minimizer in the domain of the operator $F$, which is assumed to be convex, for the Tikhonov functional
\begin{equation}
 \mathcal{F}^{\yd}_{\alpha,x_0}(x) = \|F(x) - \yd\|^p + \alpha f_{x_0}(x),
\label{tik1}
\end{equation}
with $\alpha > 0$ and $1\leq p < +\infty$.
\label{prob2}
\end{prob}
In the sequel, we shall make some fairly general assumptions on the functional $f_{x_0}$. We remark that, we introduced $x_0$ in \eqref{tik1} to allow the introduction of some {\it a priori} information.

We analyze Problem~\ref{prob1} in a discrete setup. Thus, besides choosing appropriately the  regularization parameter $\alpha$, we should choose also a proper level of discretization in the domain of the forward operator. Such choice should be made taking into account the available data $y^\delta$.
Thus, we base our choice of both parameters on the same relaxed version of Morozov's discrepancy principle. 
The present approach applies in a nontrivial way the methodology developed in \cite{moro,bonesky,anram,anram2} to the context of nonlinear operators under a discrete setting. We also establish that the continuous case can be recovered from the discrete one, when the discretization level goes to infinity. 

We propose an {\em a posteriori} choice for the discretization level in the domain of the forward operator and the regularization parameter $\alpha$ in the Tikhonov functional \eqref{tik1} based on the following relaxed version of the Morozov's discrepancy principle:
\begin{prob}
Given $1 < \tau < \lambda$ fixed, find $m\in \N$ and $\alpha>0$, such that
\begin{equation}
 \tau\delta \leq \|F(\xd) - \yd\| \leq \lambda\delta,
\label{disc1}
\end{equation}
where $\xd$, is a minimizer of (\ref{tik1}) in $\domainf \cap X_m$, with $X_m$ a finite-dimensional subspace of $X$.
\end{prob}

Under this framework, we prove that, if Problem~\ref{prob1} has a unique solution and the parameters $\alpha$ and $m$ satisfy \eqref{disc1}, then its discrete regularized reconstructions (weakly) converge, when the noise level $\delta$ goes to zero, to the solution of Problem~\ref{prob1}. When uniqueness does not hold, we also have convergence for some solution of Problem~\ref{prob1}. However, in this case, the regularized solutions converge to some $f_{x_0}$-minimizing solution of  Problem~\ref{prob1} (see Definition~\ref{defsol}), under some restrictions on the choice of $m$.

We observe that, in general the set of elements in the finite-dimensional subspace $X_m$ satisfying (\ref{disc1}) maybe empty. Thus, we shall prove that there exists some $m$ and $\alpha$ such that the discrepancy principle (\ref{disc1}) is satisfied. 
Furthermore, under suitable assumptions, convergence and convergence-rate results for the Tikhonov minimizers can be obtained for such parameters in terms of the noise level $\delta$.

Part of the proof of these results rely on the well-posedness of a discrete version of the Morozov's discrepancy principle presented in \cite{anram,anram2,bonesky}. 

We also present some guidelines to adapt the proofs of these results when the forward operator is replaced by a discrete approximation or the discrepancy principle \eqref{disc1} is replaced by the sequential discrepancy principle studied in \cite{ahm}.

\noindent This article is divided as follows:

\noindent In Section~\ref{sec:preliminar} we introduce the discrete setup and make some assumptions concerning Tikhonov-type regularization. We also present existence and stability results for the minimizers of (\ref{tik1}) with a fixed discretization level. In addition, we prove the convergence of the approximated solutions to some solution of Problem~\ref{prob1}.
\\
The well-posedness of the discrepancy principle (\ref{disc1}) for finding the appropriate discretization level in the domain and regularization parameter is stated in Section~\ref{sec:discrepancy2}. 
\\
The convergence and convergence-rate results associated to the discrepancy principle (\ref{disc1}) are established in Section~\ref{sec:convergence}.
\\
We also observe a key change in the proof of the convergence result when, instead of the forward operator $F$, we consider a discrete approximation of $F$ in Section~\ref{sec:addendum}.
\\
In Section~\ref{sec:addendumb}, we introduce an alternative discrepancy principle, which is more general than that of Equation~\eqref{disc1}. We also comment the principal changes in the proof of the convergence results, when using this discrepancy principle.\\
Section~\ref{sec:numerics} is devoted to numerical examples that illustrate the present approach.
\\
The regularizing properties of the auxiliary discrepancy principle introduced in Section~\ref{sec:discrepancy2} is proved in Appendix~\ref{app:morozov}.\\

\section{Preliminaries}\label{sec:preliminar}
We now define the discrete framework used in the subsequent sections. We also present some preliminary results on Tikhonov regularization under the discrete setup and make some important assumptions.
\begin{ass}
The regularizing functional $f_{x_0} : \mathcal{D}(f_{x_0})\rightarrow \mathbb{R}_+$ is weakly lower semi-continuous, convex, coercive, and proper. We also assume that $\domainf$ is in the interior of $\mathcal{D}(f_{x_0})$.
\label{ass00}
\end{ass}

\begin{ass}
The forward operator $F$ is continuous under the strong topologies of $X$ and $Y$. We also assume that the level sets
$$
\mathcal{M}_{\alpha}(\rho) = \{ x \in \domainf ~:~ \mathcal{F}^{\yd}_{\alpha,x_0}(x) \leq \rho \}
$$
are weakly pre-compact and weakly closed. Moreover, the restriction of $F$ to $\mathcal{M}_{\alpha}(\rho)$ is weakly continuous under the weak topologies of $X$ and $Y$.
\label{ass0}
\end{ass}

\begin{df}
An element $\xdag$ of $\domainf$ is called a least-square $f_{x_0}$-minimizing solution or simply an $f_{x_0}$-minimizing solution of Problem~\ref{prob1} if it is a least-square solution, i.e.,
$$
\xdag \in \mathcal{LS} := \left\{x \in \domainf ~:~ \|F(x) - y\| = 0\right\}
$$
and minimizes $f_{x_0}$ in $\mathcal{LS}$, i.e.,
$$
\xdag \in \mathcal{L}:=\displaystyle\argmin\{f_{x_0}(x) ~:~ x\in\mathcal{LS}\}.
$$
\label{defsol}
\end{df}
\noindent We always assume that $\mathcal{L} \not=\emptyset$.

Note that the sets $\mathcal{LS}$ and $\mathcal{L}$ depend on the noiseless data $y$.

\begin{ass}
 Let $\xdag$ be an $f_{x_0}$-minimizing solution for Problem~\ref{prob1} and $x_0 \in \domainf$ be fixed. We assume that:
\begin{equation}
 \displaystyle\liminf_{t\rightarrow 0^+}\displaystyle\frac{\|F((1-t)\xdag + t x_0) - y\|^p}{t} = 0
\end{equation}
\label{ass3}
\end{ass}
Note that Assumption~\ref{ass3} 
is satisfied by many classes of operators, such as the class of locally H\"older continuous functions with exponent greater than $1/2$, with $p = 2$. See \cite{ern,schervar} and references therein.

In the remaining part of this section we define the discrete setup of the present article.

\noindent Let the sequence $\{X_m\}_{m\in\N}$ of finite-dimensional subspaces of $X$ satisfy: 
\begin{equation}
 X_{m} \subset X_{m+1}, ~\text{for } m \in \N, ~\text{ and }~ \overline{\displaystyle\bigcup_{m \in \N}X_m} = X.
\label{def1}
\end{equation}

\begin{df}
Define the finite-dimensional sets:
\begin{equation}
 \mathcal{D}_m = \domainf \cap X_m, ~\text{ for }~ m \in \N.
\label{def2}
\end{equation}
\end{df}
\noindent The set $\mathcal{D}_m$ is convex since it is the intersection of a subspace of $X$ with a convex set. 

Note that, if we had chosen $\mathcal{D}_m$ as the orthogonal projection of $\domainf$ onto the finite-dimensional subspace $X_m$, we could possibly have that 
$
\mathcal{D}_m \cap X-\domainf \not= \emptyset,
$ 
since $F$ is not necessarily linear and $\domainf$ is not necessarily a subspace of $X$. 
Therefore, this definition ensures that $\mathcal{D}_m \subset \domainf$ for every $m \in \N$.

For now on, we assume that $\mathcal{D}_m \not= \emptyset$, for every $m$. Thus, we want to find $\xd \in \mathcal{D}_m$ minimizing (\ref{tik1}), with $m$ and $\alpha$ appropriately chosen.

The analysis that follows depends on how fast the restriction of the operator $F$ to $\mathcal{D}_m$ converges to $F$ as $m\rightarrow \infty$. Thus, we have the following definition:

\begin{df}
Let $P_m : X \rightarrow \mathcal{D}_m$ be the projection of $X$ onto $\mathcal{D}_m$, $\xdag$ be a least-square $f_{x_0}$-minimizing solution of Problem~\ref{prob1}. 
Define:
\begin{eqnarray}
 \gamma_m := \|F(\xdag) - F(P_m \xdag)\| &\text{ and }&
\phi_m := \|\xdag - P_m \xdag\|.
\label{def3}
\end{eqnarray}
\end{df}

\begin{lem} 
For every $x \in \domainf$, $\|F(x) - F(P_m x)\| \rightarrow 0$
when $m\rightarrow \infty$.
\label{lem1}
\end{lem}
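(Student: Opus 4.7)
The plan is to reduce the claim to a statement about the convergence $P_m x \to x$ in $X$ and then invoke the strong continuity of $F$ from Assumption~\ref{ass0}. Once I show that $P_m x \to x$ strongly, the continuity of $F$ between the strong topologies of $X$ and $Y$ immediately yields $F(P_m x) \to F(x)$, hence $\|F(x) - F(P_m x)\| \to 0$.

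For the strong convergence $P_m x \to x$, my first step is to use the density hypothesis \eqref{def1}, namely $\overline{\bigcup_m X_m} = X$ together with the nesting $X_m \subset X_{m+1}$. This produces a sequence $u_m \in X_m$ with $u_m \to x$, so that the distance $\operatorname{dist}(x, X_m) \to 0$. The difficulty is that $P_m x$ lies in the restricted set $\mathcal{D}_m = \domainf \cap X_m$, and a priori the approximants $u_m$ only belong to $X_m$, not to $\domainf$. So I need to upgrade the density of $\bigcup_m X_m$ in $X$ to density of $\bigcup_m \mathcal{D}_m$ around points of $\domainf$.

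To achieve this, I would exploit the convexity of $\domainf$. Fixing a reference point (for instance $x_0$, which lies in the interior of $\mathcal{D}(f_{x_0})$ by Assumption~\ref{ass00}), the convex combinations $x_t := (1-t)x + t x_0$ remain in $\domainf$ for $t \in [0,1]$ and approach $x$ as $t \to 0^+$. For each $t$, the set $X_m$ approximations $u_m^t \in X_m$ of $x_t$ can be shifted slightly to lie inside $\domainf$ (using that $x_t$ is in the relative interior for $t>0$). A diagonal argument with $t = t_m \to 0^+$ chosen slowly enough relative to the rate of approximation produces $v_m \in \mathcal{D}_m$ with $v_m \to x$. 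Applying the minimizing property of the projection, $\|P_m x - x\| \leq \|v_m - x\| \to 0$, finishing the argument.

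The main obstacle is the interplay between the linear approximation from the subspaces $X_m$ and the convex constraint set $\domainf$: the naive approximant in $X_m$ need not be feasible. Handling this cleanly requires convexity of $\domainf$ and the fact that $\domainf$ sits inside the interior of $\mathcal{D}(f_{x_0})$, which prevents pathological boundary behavior. Everything else — the passage from strong convergence in $X$ to strong convergence in $Y$ via continuity of $F$ — is a direct application of Assumption~\ref{ass0}.
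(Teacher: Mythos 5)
Your top-level structure is the same as the paper's: show $\|x-P_m x\|\rightarrow 0$ and conclude with the strong continuity of $F$ from Assumption~\ref{ass0}; the paper simply asserts the first step from \eqref{def1} in one line and stops. You correctly flag the point the paper glosses over, namely that $P_m$ projects onto $\mathcal{D}_m=\domainf\cap X_m$ rather than onto $X_m$, so density of $\bigcup_m X_m$ in $X$ does not by itself produce feasible approximants.

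However, your patch for this contains a genuine gap. The step ``the $X_m$-approximations $u_m^t$ of $x_t$ can be shifted slightly to lie inside $\domainf$, using that $x_t$ is in the relative interior for $t>0$'' is not supported by the paper's hypotheses. Convexity of $\domainf$ and $x_0\in\domainf$ only give $x_t\in\domainf$; they do not make $x_t$ an interior point of $\domainf$ in the norm topology of $X$, and Assumption~\ref{ass00} says $\domainf$ lies in the interior of $\mathcal{D}(f_{x_0})$, not that $\domainf$ itself has interior points. In infinite dimensions typical constraint sets (e.g.\ box constraints in $L^2$) have empty interior, and the relative interior may be empty as well, so there need not exist any radius $r_t>0$ around $x_t$ within $\domainf$ into which the $X_m$-approximant can be pushed; your diagonal choice $t=t_m$ needs exactly such a quantitative radius. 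In fact, without some interior- or compatibility-type hypothesis linking $\domainf$ to the subspaces $X_m$, the assertion $\operatorname{dist}(x,\mathcal{D}_m)\rightarrow 0$ can fail even though $\overline{\bigcup_m X_m}=X$: take $X=\ell^2$, $X_m=\operatorname{span}\{e_1,\dots,e_m\}$ and $\domainf$ the segment from $0$ to $w=\sum_k e_k/k$; then $\mathcal{D}_m=\{0\}$ for every $m$, so $P_m x$ does not converge to $x$ for $x\neq 0$. Thus the lemma cannot be derived from \eqref{def1} and convexity alone; the paper's one-line justification tacitly assumes that the sets $\mathcal{D}_m$ have the approximation property, and your argument would need an added hypothesis (for instance, $\domainf$ with nonempty interior, or a discretization chosen compatibly with $\domainf$) before the ``shift into $\domainf$'' step becomes valid.
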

\begin{proof} From (\ref{def1}) it follows that $\|x-P_m x\| \rightarrow 0$ as $m\rightarrow \infty$ for every $x \in \domainf$. 
Since the operator $F$ is continuous, the assertion follows.
\end{proof}

\subsection*{Existence and Stability of Tikhonov minimizers}

We consider the following optimization problem:
\begin{prob}
Find an element of
\begin{equation}
 \text{argmin}\{\|F(x) - \yd\|^p + \alpha f_{x_0}(x)\}, ~~~\text{ subject to }~~~ x \in \mathcal{D}_m.
\end{equation}
\label{tik2}
\end{prob}
We present below some well-known results concerning the existence and the stability of the solutions of Problem~\ref{tik2}. See \cite[Proposition 2.3]{pro}.

\begin{theo}[Existence]
 Let $m\in \N$ and $\delta>0$ be fixed. 
Then, for any $\yd \in Y$, it follows that Problem~\ref{tik2} has a solution.
\label{existence}
\end{theo}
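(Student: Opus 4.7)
The plan is to invoke the classical direct method of the calculus of variations, exploiting the finite-dimensionality of $X_m$ together with the compactness/weak continuity assumptions built into Assumptions~\ref{ass00} and~\ref{ass0}.

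First, I would establish that the infimum is finite. Since $\mathcal{D}_m \neq \emptyset$ by standing hypothesis and $\mathcal{D}(F) \subset \operatorname{int}(\mathcal{D}(f_{x_0}))$ by Assumption~\ref{ass00}, any $\tilde x \in \mathcal{D}_m$ yields $f_{x_0}(\tilde x) < \infty$, so $\mathcal{F}^{\yd}_{\alpha, x_0}(\tilde x) < \infty$ and
\[
    I := \inf\{ \mathcal{F}^{\yd}_{\alpha, x_0}(x) \;:\; x \in \mathcal{D}_m \} \in [0,\infty).
\]
Pick a minimizing sequence $\{x_k\} \subset \mathcal{D}_m$ with $\mathcal{F}^{\yd}_{\alpha, x_0}(x_k) \to I$. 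Without loss of generality, $\mathcal{F}^{\yd}_{\alpha, x_0}(x_k) \leq \rho$ for some $\rho > I$, so $\{x_k\} \subset \mathcal{M}_\alpha(\rho)$.

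Next, I would extract a convergent subsequence. By the weak pre-compactness of $\mathcal{M}_\alpha(\rho)$ in Assumption~\ref{ass0}, some subsequence (not relabeled) satisfies $x_k \rightharpoonup \bar x$ in $X$. Weak closedness of $\mathcal{M}_\alpha(\rho)$ gives $\bar x \in \mathcal{M}_\alpha(\rho) \subset \mathcal{D}(F)$, while weak closedness of the finite-dimensional subspace $X_m$ gives $\bar x \in X_m$. Hence $\bar x \in \mathcal{D}_m$. (Alternatively, since $X_m$ is finite-dimensional, coercivity of $f_{x_0}$ yields strong convergence of a subsequence on $X_m$, but the weak framework is enough and matches the later sections.)

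Finally, I would pass to the limit in the functional. The weak continuity of $F$ on $\mathcal{M}_\alpha(\rho)$ gives $F(x_k) \rightharpoonup F(\bar x)$ in $Y$, and by weak lower semi-continuity of $\|\cdot - \yd\|^p$ together with the weak lower semi-continuity of $f_{x_0}$ from Assumption~\ref{ass00},
\[
  \mathcal{F}^{\yd}_{\alpha, x_0}(\bar x) \;\leq\; \liminf_{k \to \infty} \|F(x_k) - \yd\|^p + \alpha \liminf_{k \to \infty} f_{x_0}(x_k) \;\leq\; \liminf_{k \to \infty} \mathcal{F}^{\yd}_{\alpha, x_0}(x_k) = I.
\]
Combined with $\bar x \in \mathcal{D}_m$, this proves $\bar x$ realizes the infimum. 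The only delicate point is making sure the limit stays inside $\mathcal{D}_m = \mathcal{D}(F) \cap X_m$; this is precisely what the weak-closedness half of Assumption~\ref{ass0} and the finite-dimensionality of $X_m$ (hence its weak closedness) jointly provide, so no genuine obstacle arises.
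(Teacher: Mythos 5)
Your proof is correct and follows the standard direct-method argument (finite infimum, minimizing sequence trapped in a level set, weak pre-compactness and weak closedness from Assumptions~\ref{ass00} and~\ref{ass0}, weak continuity of $F$ on the level set, and weak lower semi-continuity to pass to the limit), which is precisely the argument the paper relies on by citing Proposition~2.3 of \cite{pro} instead of proving the theorem itself. Your observation that weak closedness of the finite-dimensional subspace $X_m$ keeps the limit inside $\mathcal{D}_m$ is the only point specific to the discrete setting, and you handle it correctly.
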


\begin{df}
For given data $\yd$, we call a solution of Problem~\ref{tik2} stable if for a strongly convergent sequence $\{y_k\}_{k\in\N} \subset Y$, with limit $\yd$, the corresponding sequence $\{x_k\}_{k\in\N}\subset X$ of solutions of Problem~\ref{tik2}, where $\yd$ is replaced by $y_k$ in the functional of Problem~\ref{tik2}, has a weakly convergent subsequence $\{x_{k_l}\}_{l\in\N}$, with limit $\tx$, a solution of Problem~\ref{tik2} with data $\yd$.
\label{def5}
\end{df}

\begin{theo}[Stability]
 For each $m \in \N$, the solutions of Problem~\ref{tik2} are stable in the sense of Definition~\ref{def5}. Moreover, the convergent subsequence $\{x_{k_l}\}_{l\in\N}$ with limit $\tx$ from Definition~\ref{def5} satisfies the limit $f_{x_0}(x_{k_l})\rightarrow f_{x_0}(\tx)$.
\label{stability}
\end{theo}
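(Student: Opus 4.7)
The plan is to follow the standard Tikhonov stability proof, adapted to the discretized setting $\mathcal{D}_m = \domainf \cap X_m$. Fix $m \in \N$, data $\yd \in Y$, and a strongly convergent sequence $y_k \to \yd$. Let $x_k \in \mathcal{D}_m$ be a corresponding minimizer of Problem~\ref{tik2} with data $y_k$. The first task is to show boundedness: pick any admissible test element $z \in \mathcal{D}_m$ (which exists since $\mathcal{D}_m \not= \emptyset$) and use minimality to write
\[
\|F(x_k) - y_k\|^p + \alpha f_{x_0}(x_k) \;\le\; \|F(z) - y_k\|^p + \alpha f_{x_0}(z).
\]
Because $y_k \to \yd$, the right-hand side is uniformly bounded, so $\{f_{x_0}(x_k)\}$ and $\{\|F(x_k)-y_k\|\}$ are bounded, and hence, after adding and subtracting $\yd$, the Tikhonov values $\mathcal{F}^{\yd}_{\alpha,x_0}(x_k)$ are bounded as well. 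This places $\{x_k\}$ in a level set $\mathcal{M}_{\alpha}(\rho)$ for some $\rho > 0$.

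By the weak pre-compactness and weak closedness of $\mathcal{M}_{\alpha}(\rho)$ given in Assumption~\ref{ass0}, some subsequence $x_{k_l} \rightharpoonup \tx$ with $\tx \in \mathcal{M}_{\alpha}(\rho) \subset \domainf$. Since $X_m$ is finite-dimensional and therefore weakly closed, $\tx \in X_m$, so $\tx \in \mathcal{D}_m$. To see that $\tx$ solves Problem~\ref{tik2} with data $\yd$, I would take any $z \in \mathcal{D}_m$, write the minimality inequality for $x_{k_l}$ against $z$ with data $y_{k_l}$, and pass to the $\liminf$: on the left, the weak continuity of $F$ on $\mathcal{M}_{\alpha}(\rho)$ (Assumption~\ref{ass0}) together with the norm's weak lower semicontinuity and the strong convergence $y_{k_l} \to \yd$ yield $\liminf \|F(x_{k_l}) - y_{k_l}\|^p \ge \|F(\tx) - \yd\|^p$, while the weak lower semicontinuity of $f_{x_0}$ (Assumption~\ref{ass00}) gives $\liminf f_{x_0}(x_{k_l}) \ge f_{x_0}(\tx)$. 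On the right, continuity of $F$ and of the norm handle the passage to the limit, producing
\[
\|F(\tx)-\yd\|^p + \alpha f_{x_0}(\tx) \;\le\; \|F(z)-\yd\|^p + \alpha f_{x_0}(z),
\]
which is exactly the minimality of $\tx$ for data $\yd$.

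For the convergence $f_{x_0}(x_{k_l}) \to f_{x_0}(\tx)$, the idea is to plug $z = \tx$ into the same minimality inequality. This gives
\[
\|F(x_{k_l}) - y_{k_l}\|^p + \alpha f_{x_0}(x_{k_l}) \;\le\; \|F(\tx) - y_{k_l}\|^p + \alpha f_{x_0}(\tx),
\]
whose $\limsup$ bounds $\limsup \alpha f_{x_0}(x_{k_l})$ above by $\alpha f_{x_0}(\tx) + \|F(\tx)-\yd\|^p - \liminf \|F(x_{k_l})-y_{k_l}\|^p$. The last difference is $\le 0$ by the lower semicontinuity argument of the previous paragraph, so $\limsup f_{x_0}(x_{k_l}) \le f_{x_0}(\tx)$; combined with the weak lower semicontinuity bound $\liminf f_{x_0}(x_{k_l}) \ge f_{x_0}(\tx)$ this forces equality.

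The main obstacle I anticipate is bookkeeping around the variable data $y_k$ rather than a fixed $\yd$: one must ensure the level-set machinery of Assumption~\ref{ass0} (stated for $\yd$) still captures $\{x_k\}$, and one must be careful to invoke weak continuity of $F$ only on the bounded set where it is guaranteed. Once the sequence is trapped inside a common $\mathcal{M}_{\alpha}(\rho)$, the rest is a routine interplay of weak lower semicontinuity with the minimality inequality.
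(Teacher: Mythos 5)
Your argument is correct and is exactly the standard stability argument that the paper itself does not spell out but delegates to the cited reference (Proposition 2.3 of the discretization paper by P\"oschl, Resmerita and Scherzer): boundedness via minimality against a fixed test element, trapping the sequence in a level set $\mathcal{M}_{\alpha}(\rho)$ for the data $\yd$ so that Assumption~\ref{ass0} applies, weak lower semicontinuity to identify $\tx$ as a minimizer in $\mathcal{D}_m$, and the choice $z=\tx$ in the minimality inequality to upgrade $\liminf$ to a genuine limit for $f_{x_0}(x_{k_l})$. Your handling of the two points that need care in the discrete setting --- that $X_m$ is weakly closed so $\tx\in\mathcal{D}_m$, and that the level sets in Assumption~\ref{ass0} refer to $\yd$ rather than the perturbed data $y_k$ --- is sound, so no gap remains.
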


\subsection*{Convergence}
The following theorem shows that the finite-dimensional Tikhonov minimizers converge to some $f_{x_0}$-minimizing solution of Problem~(\ref{ip1}).
\begin{theo} 
 Assume that $\alpha = \alpha(\delta,\gamma_m)>0$ satisfies the limits:
\begin{equation}
 \displaystyle\lim_{\delta,\gamma_m\rightarrow 0}\alpha(\delta,\gamma_m) = 0 ~~~\text{ and }
 \displaystyle\lim_{\delta,\gamma_m\rightarrow 0}\frac{(\delta + \gamma_m)^p}{\alpha(\delta,\gamma_m)}.
\label{estlim}
\end{equation}
Let $\{x_k\}_{k\in\N}$ be a sequence of solutions of Problem~\ref{tik2} with $x_k = x^{\delta_k}_{m_k,\alpha_k}$ and $\delta_k,\gamma_{m_k}\rightarrow 0$ when $k\rightarrow \infty$. Then, it has a weakly convergent subsequence $\{x_{k_l}\}_{l\in\N}$ with weak limit $\xdag$, an $f_{x_0}$-minimizing solution of Problem~\ref{ip1} with $f_{x_0}(x_{k_l})\rightarrow f_{x_0}(\xdag)$.
\label{conv1}
\end{theo}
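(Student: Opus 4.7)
The plan is to adapt the standard Tikhonov convergence argument to the discrete setting, using as a comparison element the projection $P_{m_k}\tilde{x}$ of a fixed $f_{x_0}$-minimizing solution $\tilde{x}\in\mathcal{L}$ (so that the comparison element lies in the admissible set $\mathcal{D}_{m_k}$). Since $x_k = x^{\delta_k}_{m_k,\alpha_k}$ minimizes $\mathcal{F}^{y^{\delta_k}}_{\alpha_k,x_0}$ over $\mathcal{D}_{m_k}$, I would start from
\begin{equation*}
\|F(x_k)-y^{\delta_k}\|^p+\alpha_k f_{x_0}(x_k)\leq \|F(P_{m_k}\tilde{x})-y^{\delta_k}\|^p+\alpha_k f_{x_0}(P_{m_k}\tilde{x}).
\end{equation*}
Using $F(\tilde{x})=y$, $\|y-y^{\delta_k}\|\leq\delta_k$ and the definition of $\gamma_{m_k}$, the triangle inequality gives $\|F(P_{m_k}\tilde{x})-y^{\delta_k}\|\leq \gamma_{m_k}+\delta_k$, hence the crucial a priori estimate
\begin{equation*}
\|F(x_k)-y^{\delta_k}\|^p+\alpha_k f_{x_0}(x_k)\leq (\gamma_{m_k}+\delta_k)^p+\alpha_k f_{x_0}(P_{m_k}\tilde{x}).
\end{equation*}

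Next I would control $f_{x_0}(P_{m_k}\tilde{x})$: by Assumption~\ref{ass00}, $\tilde{x}\in\domainf$ is interior to $\mathcal{D}(f_{x_0})$, and a proper convex function is continuous on the interior of its domain, so $f_{x_0}(P_{m_k}\tilde{x})\to f_{x_0}(\tilde{x})$ as $\|P_{m_k}\tilde{x}-\tilde{x}\|\to 0$. Dividing the a priori estimate by $\alpha_k$ and invoking~\eqref{estlim} (reading the second limit as being zero) produces $\limsup_k f_{x_0}(x_k)\leq f_{x_0}(\tilde{x})$, so for $k$ large the sequence $\{x_k\}$ sits in a common level set $\mathcal{M}_{\alpha}(\rho)$. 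The weak pre-compactness in Assumption~\ref{ass0} then yields a subsequence $x_{k_l}\rightharpoonup x^\dagger$ for some $x^\dagger\in X$, and weak closedness of the level set places $x^\dagger\in\domainf$.

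To identify $x^\dagger$, I would pass to the limit in the basic estimate: since $\alpha_k f_{x_0}(x_k)\geq 0$ and the right-hand side goes to $0$, $\|F(x_k)-y^{\delta_k}\|\to 0$, whence $\|F(x_{k_l})-y\|\to 0$. Weak continuity of $F$ on $\mathcal{M}_\alpha(\rho)$ gives $F(x_{k_l})\rightharpoonup F(x^\dagger)$, and weak lower semi-continuity of the norm forces $F(x^\dagger)=y$, so $x^\dagger\in\mathcal{LS}$. Weak lower semi-continuity of $f_{x_0}$ combined with the upper bound yields
\begin{equation*}
f_{x_0}(x^\dagger)\leq\liminf_l f_{x_0}(x_{k_l})\leq\limsup_l f_{x_0}(x_{k_l})\leq f_{x_0}(\tilde{x}).
\end{equation*}
Because $\tilde{x}\in\mathcal{L}$ and $x^\dagger\in\mathcal{LS}$, the minimality of $\tilde{x}$ turns these inequalities into equalities; thus $x^\dagger$ is itself $f_{x_0}$-minimizing and $f_{x_0}(x_{k_l})\to f_{x_0}(x^\dagger)$, which is the last assertion of the theorem.

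The main obstacle is to make sure the comparison element is admissible and behaves well in the limit: the discrete constraint $x\in\mathcal{D}_{m_k}$ prevents me from plugging $\tilde{x}$ itself into the minimization, and replacing it with $P_{m_k}\tilde{x}$ introduces both the forward-operator error $\gamma_{m_k}$ (which is controlled by the hypothesis $\gamma_{m_k}\to 0$) and the penalty error $f_{x_0}(P_{m_k}\tilde{x})-f_{x_0}(\tilde{x})$. The latter is the only step that genuinely requires extra structure beyond weak lower semi-continuity, namely the fact that $\tilde{x}$ is interior to $\mathcal{D}(f_{x_0})$, so that convex continuity delivers $f_{x_0}(P_{m_k}\tilde{x})\to f_{x_0}(\tilde{x})$; once this is in place, the rest follows exactly as in the classical continuous Tikhonov argument.
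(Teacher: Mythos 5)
Your proposal is correct and follows essentially the same route as the paper: compare the minimizer $x_k$ against the admissible element $P_{m_k}\xdag$, obtain the a priori bound $(\delta_k+\gamma_{m_k})^p+\alpha_k f_{x_0}(P_{m_k}\xdag)$, use weak pre-compactness of the level sets to extract a weak limit, and then combine weak continuity of $F$ with weak lower semi-continuity of the norm and of $f_{x_0}$ to conclude the limit is an $f_{x_0}$-minimizing solution. Your additional justifications (continuity of the convex functional $f_{x_0}$ at interior points to get $f_{x_0}(P_{m_k}\xdag)\to f_{x_0}(\xdag)$, and the explicit squeeze giving $f_{x_0}(x_{k_l})\to f_{x_0}(\xdag)$) only fill in details the paper leaves implicit.
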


\begin{proof} Let us choose a sequence $\{x_k\}_{k\in\N}$ of solutions of Problem~\ref{tik2} with data $y^{\delta_k}$, where $x_k \in X_{m_k}$ and $\alpha = \alpha(\delta_k,\gamma_{m_k})$ satisfy (\ref{estlim}). The existence of this sequence follows by Theorem~\ref{existence}. Then, we have the estimates:
$$
\begin{array}{rcl}
 \|F(x_k) -y^{\delta_k}\|^p + \alpha_k f_{x_0}(x_k) &\leq& \|F(P_{m_k}x^\dagger) -y^{\delta_k}\|^p + \alpha_k f_{x_0}(P_{m_k}\xdag)\\
 &\leq & (\delta_k + \gamma_{m_k})^p + \alpha_k f_{x_0}(P_{m_k}\xdag).
\end{array}
$$
Recall that the level sets $\mathcal{M}_{\alpha}(\rho)$ are pre-compact and $X_m \subset X_{m+1}$ for every $m$. Then, the above estimate implies that $\{x_k\}_{k\in\N}$ is bounded and thus it has a weakly convergent subsequence, also denoted by $\{x_k\}_{k\in\N}$, with limit $\tx \in \domainf$. It also follows that
$$
f_{x_0}(x_k) \leq \displaystyle\frac{(\delta_k + \gamma_{m_k})^p}{\alpha_k} +  f_{x_0}(P_{m_k}\xdag)
$$
Then, $\displaystyle\limsup_{k\rightarrow\infty} f_{x_0}(x_k) \leq f_{x_0}(\xdag)$, since $f_{x_0}(P_{m_k}\xdag) \rightarrow f_{x_0}(\xdag)$.
On the other hand, we have the following estimate:
$$
\|F(x_k) -y^{\delta_k}\|^p \leq (\delta_k + \gamma_{m_k})^p + \alpha_k f_{x_0}(P_{m_k}\xdag).
$$
Recall that the functional $f_{x_0}$ and the norm of $Y$ are weakly lower semi-continuous. Then,  the weak continuity of $F$ implies that:
$$
\begin{array}{rcl}
 \|F(\tx)-y\| &\leq& \displaystyle\liminf_{k\rightarrow\infty} \|F(x_k)-y\| \leq \displaystyle\limsup_{k\rightarrow\infty} \|F(x_k)-y\|\\
 &\leq& \lim_{k\rightarrow\infty}(\delta_k + \gamma_{m_k})^p + \alpha_k f_{x_0}(P_{m_k}\xdag) = 0
\end{array}
$$
$$
\begin{array}{rcl}
 f_{x_0}(\tx) &\leq& \displaystyle\liminf_{k\rightarrow\infty} f_{x_0}(x_k) \leq f_{x_0}(\xdag).
\end{array}
$$
Therefore, $\tx$ is an $f_{x_0}$-minimizing solution of Problem~(\ref{ip1}).

\end{proof}

\section{The Discrepancy Principle}\label{sec:discrepancy2}
In this section we consider the simultaneous choice of the discretization level in the domain and the regularization parameter based on the same discrepancy principle. 

\begin{df}
 Let $\delta> 0$ and $\yd$ be fixed. For $\lambda >\tau > 1$, we choose the greatest $m \in \N$ and $\alpha > 0$, with $m = m(\delta,\yd)$ and $\alpha = \alpha(\delta,\yd)$, such that
\begin{equation}
 \tau \delta \leq \|F(\xd) - \yd\| \leq \lambda\delta,
\label{discm2}
\end{equation}
holds for $\xd$ a solution of (\ref{tik2}) with these same $m$ and $\alpha$.
\label{def7}
\end{df}

\begin{pr}
 There exist $m \in \N$ and $\alpha > 0$ satisfying \eqref{discm2}.
\label{pro_disc}
\end{pr}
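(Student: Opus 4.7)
The plan is to reduce this two-parameter existence statement to the known (fixed-level) discrete Morozov principle by first fixing a sufficiently large discretization level $m$, and then invoking the auxiliary result from Appendix A to supply the regularization parameter $\alpha$.

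First I would observe that $P_m \xdag \in \mathcal{D}_m$ already gives a good candidate to bound the minimal residual on the finite-dimensional set. Namely,
$$
\|F(P_m\xdag) - \yd\| \le \|F(P_m\xdag) - F(\xdag)\| + \|y - \yd\| \le \gamma_m + \delta.
$$
Because $\tau > 1$, Lemma~\ref{lem1} (which gives $\gamma_m \to 0$) allows me to fix an $m_0 \in \N$ such that $\gamma_{m_0} + \delta < \tau\delta$. Consequently,
$$
\inf_{x \in \mathcal{D}_{m_0}} \|F(x) - \yd\| \,\le\, \gamma_{m_0} + \delta \,<\, \tau\delta,
$$
which, as $\alpha \to 0^+$, is the asymptotic value of the residual of the Tikhonov minimizer $x^\delta_{m_0,\alpha}$. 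Hence the ``small-$\alpha$'' end of the residual function lies strictly below the lower threshold $\tau\delta$ in \eqref{discm2}.

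Second, I would apply the discrepancy-based choice of $\alpha$ for fixed $m_0$ whose well-posedness is stated in Appendix~\ref{app:morozov}. For $\alpha$ large the Tikhonov minimizer $x^\delta_{m_0,\alpha}$ tends to an $f_{x_0}$-minimizer on $\mathcal{D}_{m_0}$, whose residual is either $\ge \lambda\delta$ (the case where regularization is nontrivial, to which the auxiliary principle directly applies to produce $\alpha$ with residual in $[\tau\delta,\lambda\delta]$) or $< \lambda\delta$ (the ``already over-regularized'' case, in which any sufficiently large $\alpha$ yields a pair $(m_0,\alpha)$ with residual in the target interval after a trivial monotonicity check). In both subcases, combining the stability of Theorem~\ref{stability} with a standard lower semi-continuity argument for $\alpha \mapsto \|F(x^\delta_{m_0,\alpha}) - \yd\|$ and the intermediate-value behaviour in $\alpha$ produces the required $\alpha$.

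The main obstacle I expect is the rigorous handling of the map $\alpha \mapsto \|F(x^\delta_{m_0,\alpha}) - \yd\|$, since Tikhonov minimizers at fixed $m$ need not be unique and one must carefully control the residual selection across $\alpha$. This is precisely the content packaged as the auxiliary discrepancy principle of Appendix~\ref{app:morozov}; once that is available, Proposition~\ref{pro_disc} follows by the two-step reduction above -- choose $m_0$ using Lemma~\ref{lem1} so the residual is pushed below $\tau\delta$, then invoke the appendix result at that $m_0$ to obtain $\alpha > 0$ satisfying \eqref{discm2}.
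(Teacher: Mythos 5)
Your overall strategy --- fix $m$ large using $\gamma_m\to 0$ and then let the fixed-level discrete Morozov principle supply $\alpha$ --- is exactly the paper's route, but the quantitative condition you impose on $m_0$ controls the wrong inequality and leaves a real gap. The fixed-level principle (Definition~\ref{def6}, whose well-posedness is Theorem~\ref{theo:morozov1} in Section~\ref{sec:discrepancy2}; Appendix~\ref{app:morozov} only proves its regularizing properties) delivers $\alpha$ with $\tau_1(\delta+\gamma_{m_0}) \le \|F(x^\delta_{m_0,\alpha})-\yd\| \le \tau_2(\delta+\gamma_{m_0})$. Taking $\tau_1=\tau$, the lower bound $\tau\delta$ in \eqref{discm2} is then automatic; what actually requires $m_0$ to be large is the upper bound, i.e. $\tau_2(\delta+\gamma_{m_0}) \le \lambda\delta$, which is the paper's condition \eqref{def4}, $\gamma_{m_0}\le(\lambda/\tau_2-1)\delta$. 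Your condition $\gamma_{m_0}+\delta<\tau\delta$ does not imply this: whenever $\tau\tau_2>\lambda$ (e.g. $\tau=1.5$, $\lambda=1.6$) one can have $\gamma_{m_0}<(\tau-1)\delta$ and still $\tau_2(\delta+\gamma_{m_0})>\lambda\delta$, so the $\alpha$ produced by the auxiliary principle may give a residual in $(\lambda\delta,\tau_2(\delta+\gamma_{m_0})]$, violating \eqref{discm2}. If instead you intend a direct intermediate-value argument with the thresholds $\tau\delta$ and $\lambda\delta$ themselves, you must exclude a jump of the (set-valued, possibly discontinuous) residual map across the whole interval $[\tau\delta,\lambda\delta]$; in the paper this is the role of Assumption~\ref{ass4} together with Lemmas~\ref{lemoro1}--\ref{lemoro3}, and Theorem~\ref{stability} plus lower semicontinuity is not a substitute for it.

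Your ``already over-regularized'' subcase is also incorrect as stated. Since $x_0\in\mathcal{D}_{m_0}$ and $f_{x_0}(x_0)=0$, every Tikhonov minimizer satisfies $\|F(x^\delta_{m_0,\alpha})-\yd\|\le\|F(x_0)-\yd\|$ for every $\alpha$; hence if $\|F(x_0)-\yd\|\le\tau\delta$ no choice of $\alpha$, however large, reaches the lower threshold at that level, and enlarging $m$ does not help because $x_0$ remains feasible for all larger levels. So ``any sufficiently large $\alpha$ works after a trivial monotonicity check'' fails there. The paper avoids this degenerate situation through the hypothesis of Proposition~\ref{pr7}, $\|F(P_m x_0)-\yd\|>\tau_2(\delta+\gamma_m)$, on which Theorem~\ref{theo:morozov1} rests; your argument needs the same (or an analogous) assumption rather than a case analysis that claims to dispense with it. With the choice of $m$ corrected to \eqref{def4} and the explicit appeal to Theorem~\ref{theo:morozov1} with $\tau_1=\tau$ and $\tau<\tau_2<\lambda$, your two-step reduction becomes precisely the paper's proof.
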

In Section~\ref{sec:addendumb} we present an alternative discrepancy principle, where only one of the inequalities of \eqref{discm2} should be satisfied. Thus, even the functional $\alpha \mapsto \|F(x^\delta_{m,\alpha} - y^\delta\|$ has some discontinuity, the inequality is satisfied by some $\alpha$, since, we shall see that $\lim_{\alpha \searrow 0}\|F(x^\delta_{m,\alpha} - y^\delta\| = 0$.

The existence of the regularization parameter and the discretization level satisfying the Discrepancy Principle~(\ref{discm2}) follows by the well-posedness of the modified Morozov's discrepancy principle \eqref{def6}. More precisely, we have to choose $m\in\N$ such that $\gamma_m$ satisfies a modified version of \eqref{discm2}. For this same $m$, we choose $\alpha > 0$ through \eqref{def6}, given that it is well-posed. Then, these $\alpha$ and $m$ satisfy the same discrepancy principle, as required. The well-posedness proof of this problem, for each discretization level $m\in\N$, is the aim of the following paragraph.
  
 In what follows we assume that $x_0$ in the penalization $f_{x_0}$ is an element of the finite-dimensional sub-domain $\mathcal{D}_{m_0}$ for some $m_0 \leq m$,  for every $m$ considered in the analysis.

\subsection*{Discrete Morozov's Principle}\label{sec:morozov}
We now present a criterion to choose the regularization parameter $\alpha$ by a modified version of the Morozov's principle for a given discretization level $m\in\N$.

In what follows we assume that:
\begin{equation}
f_{x_0} (x) = 0 \text{ if, and only if, } x = x_0.
\label{condm} 
\end{equation}

\begin{df}
 Let $\delta$, $\yd$ and the domain discretization level $m$ be fixed. For $\alpha \in \R^+$, we define the functionals:
\begin{eqnarray}
 L(\xd) &=& \|F(\xd) - \yd\|, \label{func1}\\
 H(\xd) &=& f_{x_0}(\xd), \label{func2}\\
 I(\alpha) &=& \|F(\xd)-\yd\|^p + \alpha f_{x_0}(\xd). \label{func3}
\end{eqnarray}
We also define the set of all solutions of Problem~\ref{tik2} for each $\alpha \in (0,\infty)$ and $m \in \N$:
 $$
 M_{\alpha,m} := \{\xd \in \mathcal{D}_m ~:~ \mathcal{F}^{\yd}_{\alpha,x_0}(\xd) \leq \mathcal{F}^{\yd}_{\alpha,x_0}(x), \forall x \in X_m\}.
 $$
 Note that, in what follows we assume that the Tikhonov functional is defined for any $x \in X_m$, i.e., it assume finite values if $x\in \mathcal{D}_m$ and it assumes the value $+\infty$ if $x\not\in \mathcal{D}_m$. Note also that the definition of the sets $M_{\alpha,m}$ implies that every local minimizer of the Tikhonov functional is a global minimizer in $X_m$.
\label{def8}
\end{df}

In the following results, some properties of the functionals $L$, $H$ and $I$ are presented. See \cite[Section~{2.6}]{tikar}.

\begin{lem}{\cite[Lemma 2.6.1]{tikar}}
As functions of $\alpha \in (0,\infty)$, it follows that, the functional $H(\cdot)$ is non-increasing and the functionals $L$ and $I$ are non-decreasing. More precisely, for $0 < \alpha < \beta$ we have
$$
\displaystyle\sup_{x \in M_{m,\alpha}}L(x) \leq \displaystyle\inf_{x \in M_{m,\beta}}L(x),~~
\displaystyle\inf_{x \in M_{m,\alpha}}H(x) \geq \displaystyle\sup_{x \in M_{m,\beta}}H(x) ~~\text{ and }~~
I(\alpha) \leq I(\beta).
$$
\label{lemoro1}
\end{lem}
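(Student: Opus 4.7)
My plan is to exploit the minimality condition in its classical ``exchange'' form: pick arbitrary $x_\alpha \in M_{m,\alpha}$ and $x_\beta \in M_{m,\beta}$ with $0 < \alpha < \beta$, and compare the Tikhonov functional at these two points using each of the parameters $\alpha$ and $\beta$. This is the standard Tikhonov monotonicity argument, adapted to the fact that $M_{m,\alpha}$ need not be a singleton.

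First I would write the two minimality inequalities
\begin{equation*}
L(x_\alpha)^p + \alpha H(x_\alpha) \;\leq\; L(x_\beta)^p + \alpha H(x_\beta),
\qquad
L(x_\beta)^p + \beta H(x_\beta) \;\leq\; L(x_\alpha)^p + \beta H(x_\alpha),
\end{equation*}
which both use that $x_\alpha, x_\beta \in \mathcal{D}_m \subset X_m$, so each is admissible as a competitor in the other's problem. Adding them cancels the $L^p$ terms and yields $(\beta-\alpha)\bigl(H(x_\beta)-H(x_\alpha)\bigr) \leq 0$. Since $\beta-\alpha>0$, this gives $H(x_\beta)\leq H(x_\alpha)$ for \emph{every} pair of minimizers, hence
\begin{equation*}
\sup_{x\in M_{m,\beta}} H(x) \;\leq\; \inf_{x\in M_{m,\alpha}} H(x),
\end{equation*}
which is the claimed monotonicity of $H$.

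Next, I would plug $H(x_\beta)\leq H(x_\alpha)$ back into the first minimality inequality to obtain $L(x_\alpha)^p \leq L(x_\beta)^p + \alpha(H(x_\beta)-H(x_\alpha)) \leq L(x_\beta)^p$; taking $p$-th roots and quantifying over arbitrary minimizers gives the required inequality
\begin{equation*}
\sup_{x\in M_{m,\alpha}} L(x) \;\leq\; \inf_{x\in M_{m,\beta}} L(x).
\end{equation*}
Finally, for the monotonicity of $I$, I would just chain the minimality at $\alpha$ with the nonnegativity of $H$ (guaranteed by Assumption~\ref{ass00}, since $f_{x_0}$ maps into $\mathbb{R}_+$):
\begin{equation*}
I(\alpha) = L(x_\alpha)^p + \alpha H(x_\alpha) \leq L(x_\beta)^p + \alpha H(x_\beta) \leq L(x_\beta)^p + \beta H(x_\beta) = I(\beta).
\end{equation*}

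There is no serious obstacle here; the only subtlety is bookkeeping: $M_{m,\alpha}$ is in general not a singleton, so one must phrase the conclusion in the $\sup$/$\inf$ form and verify that the argument works uniformly in the choice of representatives $x_\alpha, x_\beta$. The exchange inequalities deliver this uniformity automatically because they hold for arbitrary minimizers on each side. The nonnegativity of $f_{x_0}$ is the only extra ingredient needed for the $I(\alpha)\leq I(\beta)$ step, and it is built into Assumption~\ref{ass00}.
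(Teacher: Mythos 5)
Your proof is correct, and it is essentially the standard exchange argument behind the cited result: the paper itself offers no proof, simply invoking \cite[Lemma 2.6.1]{tikar} together with the remark that the argument survives restriction to the fixed finite-dimensional subspace $X_m$, which is exactly what your competitor inequalities verify. The only ingredients you use (finiteness of the minimal value on $\mathcal{D}_m\neq\emptyset$, nonnegativity of $f_{x_0}$, and admissibility of each minimizer in the other problem) are all available under the paper's assumptions, so nothing is missing.
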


\begin{lem}
The functional $I:(0,\infty)\rightarrow [0,\infty]$ is continuous. The sets
$$
M_m := \displaystyle\left\{\alpha > 0 \left| \displaystyle\inf_{x \in M_{m,\alpha}}L(x) < \displaystyle\sup_{x \in M_{m,\alpha}}L(x)\right.\right\}
$$
and
$$
N_m:= \displaystyle\left\{\alpha > 0 \left| \displaystyle\inf_{x \in M_{m,\alpha}}H(x) < \displaystyle\sup_{x \in M_{m,\alpha}}H(x)\right.\right\}
$$
are countable and coincide. Moreover, for each $m\in\N$ the maps $L$ and $H$ are continuous in $(0,\infty) \backslash M_m$.
\label{lemoro2}
\end{lem}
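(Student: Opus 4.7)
The plan is to prove the four assertions in the order listed: continuity of $I$, countability of $M_m$ (and $N_m$), coincidence $M_m=N_m$, and pointwise continuity of $L$ and $H$ on the complement of $M_m$.

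First, I would establish continuity of $I$ by exploiting concavity. For every fixed $x\in\mathcal{D}_m$ the map $\alpha\mapsto \|F(x)-\yd\|^p+\alpha f_{x_0}(x)$ is affine in $\alpha$, so $I(\alpha)=\inf_{x\in\mathcal{D}_m}(\|F(x)-\yd\|^p+\alpha f_{x_0}(x))$ is an infimum of affine functions and therefore concave on $(0,\infty)$. Any finite concave function on an open interval is automatically continuous. (Finiteness follows from Theorem~\ref{existence}, which guarantees $M_{m,\alpha}\neq\emptyset$ for each $\alpha>0$, so $I(\alpha)<\infty$.)

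Second, I would show $M_m=N_m$. Pick $\alpha>0$ and two elements $x_1,x_2\in M_{m,\alpha}$. By the definition of $M_{m,\alpha}$, both realize the same Tikhonov value, hence
\begin{equation*}
L(x_1)^p+\alpha H(x_1)=L(x_2)^p+\alpha H(x_2)=I(\alpha).
\end{equation*}
Therefore $L(x_1)\neq L(x_2)$ if and only if $H(x_1)\neq H(x_2)$, proving the two sets coincide. For countability, denote $\ell(\alpha):=\inf_{x\in M_{m,\alpha}} L(x)$ and $\mathcal{L}(\alpha):=\sup_{x\in M_{m,\alpha}} L(x)$. By Lemma~\ref{lemoro1}, $\mathcal{L}(\alpha)\le \ell(\beta)$ whenever $\alpha<\beta$, so the open intervals $(\ell(\alpha),\mathcal{L}(\alpha))$, $\alpha\in M_m$, are pairwise disjoint nonempty intervals in $\R$, hence at most countably many.

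Third, for the continuity of $L$ and $H$ on $(0,\infty)\setminus M_m$, fix $\alpha\notin M_m$ so that $L$ and $H$ are single-valued on $M_{m,\alpha}$ with common values $L(\alpha)$, $H(\alpha)$. Take any $\alpha_n\to\alpha$ and pick $x_n\in M_{m,\alpha_n}$. Using $I(\alpha_n)\to I(\alpha)$ (by Step~1) together with the coercivity of $f_{x_0}$ in Assumption~\ref{ass00}, the sequence $\{x_n\}$ lies in a bounded, weakly pre-compact level set (Assumption~\ref{ass0}); pass to a weakly convergent subsequence $x_{n_k}\rightharpoonup x^*$. Weak lower semicontinuity of $\|\cdot\|$ and of $f_{x_0}$, together with weak continuity of $F$ on the level set, yield
\begin{equation*}
\|F(x^*)-\yd\|^p+\alpha f_{x_0}(x^*)\le \liminf_{k\to\infty}\bigl(\|F(x_{n_k})-\yd\|^p+\alpha_{n_k} f_{x_0}(x_{n_k})\bigr)=\lim_{k\to\infty} I(\alpha_{n_k})=I(\alpha),
\end{equation*}
so $x^*\in M_{m,\alpha}$, giving $L(x^*)=L(\alpha)$ and $H(x^*)=H(\alpha)$.

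The remaining and subtle step is to upgrade the $\liminf$-inequalities into genuine convergence of $L(x_{n_k})$ and $H(x_{n_k})$. From $I(\alpha_{n_k})=L(x_{n_k})^p+\alpha_{n_k} H(x_{n_k})\to L(x^*)^p+\alpha H(x^*)$, together with the two separate weak lower semicontinuity inequalities $\liminf L(x_{n_k})^p\ge L(x^*)^p$ and $\liminf H(x_{n_k})\ge H(x^*)$, a strict inequality in either one would force a strict inequality in the sum, contradicting the convergence of $I$. Hence both inequalities are equalities and $L(x_{n_k})\to L(\alpha)$, $H(x_{n_k})\to H(\alpha)$; since every subsequence admits a further one with this property, the full sequences converge. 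The main obstacle is precisely this decoupling step, which hinges on nonnegativity of $L^p$ and $H$ and on the continuity of $I$ proved at the outset.
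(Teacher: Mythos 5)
Your proof is correct, but note that the paper itself does not prove this lemma at all: it is quoted from the literature (see the pointer to Section~2.6 of \cite{tikar}, and the analogous statements in \cite{anram,bonesky}), with only a remark asserting that the arguments survive restriction to the finite-dimensional subspace $X_m$. What you have written is essentially a self-contained reconstruction of that standard argument, adapted to the discrete setting: continuity of $I$ from concavity (infimum of affine functions of $\alpha$) plus finiteness via Theorem~\ref{existence}; $M_m=N_m$ from the identity $L(x_1)^p+\alpha H(x_1)=L(x_2)^p+\alpha H(x_2)=I(\alpha)$ on $M_{m,\alpha}$; countability from the monotonicity in Lemma~\ref{lemoro1}, which makes the gaps $(\inf L,\sup L)$ pairwise disjoint open intervals; and continuity of $L,H$ off $M_m$ by extracting weak limits of minimizers and decoupling the two terms of $I$ using nonnegativity and the already-proved continuity of $I$. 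Two small points you should make explicit to be fully rigorous: (i) before concluding $x^*\in M_{m,\alpha}$ you need $x^*\in\mathcal{D}_m$, which follows since $X_m$ is finite-dimensional (hence weakly closed) and the level sets $\mathcal{M}_\alpha(\rho)$ are weakly closed by Assumption~\ref{ass0}; and (ii) in the decoupling step, matching the liminf inequalities only controls lower limits, so to get genuine convergence of $L(x_{n_k})$ and $H(x_{n_k})$ you should also rule out $\limsup L(x_{n_k})^p>L(x^*)^p$ by passing to a sub-subsequence realizing the limsup and observing that the corresponding $H$-values would then violate $\liminf H(x_{n_k})\geq H(x^*)$; this is implicit in your argument and easily filled in. With these details added, your proof stands on its own and in fact supplies the justification the paper delegates to the cited references.
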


\begin{lem}
For each $\overline{\alpha}>0$, there exist $x_1,x_2 \in M_{m,\overline{\alpha}}$ such that
$$
L(x_1) = \displaystyle\inf_{x \in M_{m,\overline{\alpha}}}L(x) ~~~\text{and} ~~~ 
L(x_2) = \displaystyle\sup_{x \in M_{m,\overline{\alpha}}}L(x).
$$
\label{lemoro3}
\end{lem}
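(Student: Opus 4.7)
The plan is to treat the two attainment claims separately, in each case extracting a weak limit of an extremizing sequence in $M_{m,\overline{\alpha}}$ and exploiting the crucial observation that every element of $M_{m,\overline{\alpha}}$ assigns the same value $I(\overline{\alpha})$ to the Tikhonov functional. Consequently all such minimizers sit inside the fixed level set $\mathcal{M}_{\overline{\alpha}}(I(\overline{\alpha}))$, on which Assumption~\ref{ass0} delivers weak pre-compactness, weak closedness, and weak continuity of~$F$ for free.

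For the infimum I would take $\{x_k\}\subset M_{m,\overline{\alpha}}$ with $L(x_k)\to\inf_{x\in M_{m,\overline{\alpha}}}L(x)$ and, using weak pre-compactness of the level set, extract a weakly convergent subsequence with some limit $x_1\in\mathcal{D}(F)$. Weak continuity of $F$ on the level set combined with weak lower semi-continuity of $\|\cdot-\yd\|$ and of $f_{x_0}$ (Assumption~\ref{ass00}) yields $\mathcal{F}^{\yd}_{\overline{\alpha},x_0}(x_1)\le\liminf_{k}\mathcal{F}^{\yd}_{\overline{\alpha},x_0}(x_k)=I(\overline{\alpha})$, so $x_1\in M_{m,\overline{\alpha}}$. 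The same lsc bound applied to $L$ alone gives $L(x_1)\le\inf_{x\in M_{m,\overline{\alpha}}}L(x)$, and the reverse inequality is immediate from membership, whence equality.

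For the supremum the structure is identical but one extra observation is needed, since weak lower semi-continuity in general does not deliver attainment of a supremum. The key trick is the identity $L(x)^p+\overline{\alpha}H(x)=I(\overline{\alpha})$ that holds for every $x\in M_{m,\overline{\alpha}}$, which converts $\sup L$ into $\inf H$. Picking $\{x_k\}\subset M_{m,\overline{\alpha}}$ with $L(x_k)\to\sup_{x\in M_{m,\overline{\alpha}}}L(x)$ (equivalently $H(x_k)\to\inf_{x\in M_{m,\overline{\alpha}}}H(x)$), I extract a weak limit $x_2$ exactly as above. Weak lsc of $H$ gives $H(x_2)\le\inf_{x\in M_{m,\overline{\alpha}}}H(x)$, while weak lsc of the norm combined with weak continuity of $F$ gives $L(x_2)\le\sup_{x\in M_{m,\overline{\alpha}}}L(x)$. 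Summing these with the correct weights yields $\mathcal{F}^{\yd}_{\overline{\alpha},x_0}(x_2)\le I(\overline{\alpha})$, so $x_2\in M_{m,\overline{\alpha}}$; the constancy identity then forces both of the lsc inequalities to be equalities, in particular $L(x_2)=\sup_{x\in M_{m,\overline{\alpha}}}L(x)$.

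The main obstacle is precisely this supremum case: without the constancy $L^p+\overline{\alpha}H=I(\overline{\alpha})$ on $M_{m,\overline{\alpha}}$, a direct lsc argument would only recover attainment of the infimum and the claim for the supremum could fail. Beyond this switch from $\sup L$ to $\inf H$, the proof is a routine weak-compactness-plus-lower-semi-continuity argument of the same flavor used in the proofs of Theorem~\ref{existence} and Theorem~\ref{conv1}.
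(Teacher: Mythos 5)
Your argument is correct, but note that the paper itself gives no proof of Lemma~\ref{lemoro3}: it delegates to \cite[Section~2.6]{tikar} (see also the remark that those proofs persist when $L$, $H$, $I$ are restricted to $X_m$), so what you wrote is essentially a reconstruction of the standard argument from that literature --- weak pre-compactness and weak closedness of the level set $\mathcal{M}_{\overline{\alpha}}(I(\overline{\alpha}))$ from Assumption~\ref{ass0}, weak lower semi-continuity of the norm and of $f_{x_0}$, and the constancy $L^p+\overline{\alpha}H=I(\overline{\alpha})$ on $M_{m,\overline{\alpha}}$ to convert the supremum of $L$ into the infimum of $H$. That conversion is exactly the right device, and your closing step (the two lsc inequalities summing to the exact value $I(\overline{\alpha})$, hence both being equalities) is sound. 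Two small points. First, to conclude $x_1,x_2\in M_{m,\overline{\alpha}}$ you need the weak limits to lie in $\mathcal{D}_m=\domainf\cap X_m$, not merely to satisfy the value inequality $\mathcal{F}^{\yd}_{\overline{\alpha},x_0}(\cdot)\leq I(\overline{\alpha})$; this is immediate ($X_m$ is finite-dimensional, hence a closed and therefore weakly closed subspace, and the level set is a weakly closed subset of $\domainf$), but it should be stated. Second, in this discrete setting there is an even shorter route, which is what the paper's remark implicitly appeals to: $M_{m,\overline{\alpha}}$ is bounded (coercivity of $f_{x_0}$) and closed in the finite-dimensional $X_m$, hence compact, and $L$ is continuous there because $F$ is strongly continuous, so both extrema are attained by Weierstrass with no need for the $\sup L \leftrightarrow \inf H$ trick; your version has the compensating advantage of working verbatim in the infinite-dimensional case.
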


\begin{rem}
 Even we are under a discrete setting, it follows that the proofs of the Lemmas~\ref{lemoro1}, \ref{lemoro2} and \ref{lemoro3} hold, if the functionals $L$, $H$ and $I$ are restricted to the finite-dimensional subspace $X_m$, with $m \in \N$ fixed.
\end{rem}

In the present section we consider the following relaxed version of Morozov's discrepancy principle:
\begin{df}[Discrete Morozov's Principle]
 Let $\delta$, $\yd$ and the domain discretization level $m$ be fixed. Define $\tau_1:= \tau$ and let $\tau_2$ be such that $1 < \tau_1 \leq \tau_2 < \lambda$. Then, find $\alpha = \alpha(\delta,\yd,m) > 0$ such that
\begin{equation}
 \tau_1(\delta + \gamma_m) \leq \|F(\xd) - \yd\| \leq \tau_2(\delta + \gamma_m),
\end{equation}
holds for $\xd$, a solution of Problem~\ref{tik2}.
\label{def6}
\end{df}

In Section~\ref{sec:addendumb} we present an alternative discrepancy principle.

\begin{pr}
Let $1<\tau_1 \leq \tau_2$ be fixed. Suppose that $\|F(P_m x_0) - \yd\| > \tau_2(\delta+\gamma_m)$. Then, we can find $\underline{\alpha}$ and $\overline{\alpha}>0$, such that
$$
L(x_1) < \tau_1(\delta + \gamma_m) \leq \tau_2 (\delta + \gamma_m) < L(x_2),
$$
where we denote $x_1 :=  x^\delta_{m,\underline{\alpha}}$ and $x_2 := x^\delta_{m,\overline{\alpha}}$.
\label{pr7}
\end{pr}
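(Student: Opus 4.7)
The plan is to produce $\underline{\alpha}$ and $\overline{\alpha}$ separately, by exploiting the classical monotonicity of the residual $L(\cdot)$ in $\alpha$ (Lemma~\ref{lemoro1}) together with two opposite comparison choices inside the Tikhonov functional. Intuitively, for very small $\alpha$ the minimizer drives the residual below $\delta+\gamma_m$, while for very large $\alpha$ the regularization term dominates and the minimizer is pushed toward $x_0$, where the residual is by hypothesis strictly above $\tau_2(\delta+\gamma_m)$.

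For the left inequality, I would compare $x^\delta_{m,\alpha}$ with the admissible competitor $P_m\xdag \in \mathcal{D}_m$, where $\xdag$ is any $f_{x_0}$-minimizing solution. The minimizing property of $x^\delta_{m,\alpha}$, together with \eqref{estdata} and the definition of $\gamma_m$, gives
\[
L(x^\delta_{m,\alpha})^p \le \mathcal{F}^{\yd}_{\alpha,x_0}(P_m\xdag) \le (\delta + \gamma_m)^p + \alpha\, f_{x_0}(P_m\xdag).
\]
Assuming $\delta+\gamma_m>0$ (implicit in the hypothesis, since otherwise we would need $\|F(P_m x_0)-\yd\|>0$ while also $L(x_1)<0$), the factor $\tau_1^p - 1>0$ allows me to pick $\underline{\alpha}$ so small that the right-hand side is strictly below $\tau_1^p(\delta+\gamma_m)^p$, which yields $L(x_1)<\tau_1(\delta+\gamma_m)$.

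For the right inequality, I would use that $x_0\in\mathcal{D}_{m_0}\subseteq\mathcal{D}_m$ is itself an admissible competitor and that $f_{x_0}(x_0)=0$ by \eqref{condm}, giving
\[
\alpha\, f_{x_0}(x^\delta_{m,\alpha}) \le \mathcal{F}^{\yd}_{\alpha,x_0}(x_0) = \|F(x_0) - \yd\|^p.
\]
Hence $f_{x_0}(x^\delta_{m,\alpha}) \to 0$ as $\alpha\to\infty$. Coercivity and lower semi-continuity of $f_{x_0}$ (Assumption~\ref{ass00}), combined with the fact that $x_0$ is the unique zero of $f_{x_0}$ under \eqref{condm} and that $X_m$ is finite-dimensional, force $x^\delta_{m,\alpha}\to x_0$ along any sequence $\alpha\to\infty$. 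Continuity of $F$ (Assumption~\ref{ass0}) then yields
\[
\lim_{\alpha\to\infty} L(x^\delta_{m,\alpha}) = \|F(x_0)-\yd\| = \|F(P_m x_0)-\yd\| > \tau_2(\delta+\gamma_m),
\]
so any sufficiently large $\overline{\alpha}$ produces $L(x_2)>\tau_2(\delta+\gamma_m)$, and combining both regimes closes the proof.

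The main obstacle is the behavior at large $\alpha$: a priori $L$ may fail to be continuous on the countable exceptional set $M_m$ from Lemma~\ref{lemoro2}, so one cannot simply invoke continuity to evaluate the limit. The rescue is that the argument above does not need continuity of $L$ at all; it uses only that every Tikhonov minimizer at large $\alpha$ must converge strongly to $x_0$ (via the strict normalization in \eqref{condm}, coercivity, and finite-dimensionality of $X_m$), after which strong continuity of $F$ delivers the limit of the residual directly.
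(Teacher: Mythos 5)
Your proof is correct and follows essentially the same route as the paper: compare the minimizer with $P_m\xdag$ for small $\alpha$ to push the residual strictly below $\tau_1(\delta+\gamma_m)$, and with $x_0$ (using $f_{x_0}(x_0)=0$) for large $\alpha$ to force $f_{x_0}(x^\delta_{m,\alpha})\to 0$ and hence the residual above $\tau_2(\delta+\gamma_m)$. The only difference is minor: where the paper passes to the weak limit $x^\delta_{m,\alpha}\rightharpoonup x_0$ and uses weak lower semicontinuity of the norm to obtain a liminf bound, you exploit the finite-dimensionality of $X_m$ to get strong convergence to $x_0$ and then the strong continuity of $F$ — both arguments deliver the required $\overline{\alpha}$.
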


\begin{proof}  Let $m$ be fixed and let the sequence $\{\alpha_k\}_{k\in\N}$ converge monotonically to zero. By Theorem~\ref{existence}, it is possible to find the sequence of  Tikhonov minimizers of Problem~\ref{tik2}, $\{x_k\}_{k\in\N}$, with $x_k = x^\delta_{m,\alpha_k}$. By the definition of the functionals $L$ and $I$, it follows that
$$
L(x_k) \leq  I(\alpha_k) \leq \|F(P_{m}\xdag) - \yd\|^p + \alpha_k f_{x_0}(P_{m}\xdag) \leq (\gamma_m + \delta)^p + \alpha_k f_{x_0}(P_{m}\xdag).
$$
Note that, $f_{x_0}(P_{m}\xdag)$ is fixed and $\tau_1 > 1$. It follows that, 
$
L(x_k) < \tau_1(\gamma_m + \delta)^p. 
$ 
Then, for a sufficiently large $\tilde{k}$ the above inequality holds. This allows us to set $\underline{\alpha} := \alpha_{\tilde{k}}$.

On the other hand, let us assume that the sequence  $\{\alpha_k\}_{k\in\N}$ satisfies $\lim_{k\rightarrow\infty}\alpha_k = +\infty$. Again, by Theorem~\ref{existence}, it is possible to choose a sequence of minimizers of Problem~\ref{tik2} $\{x_k\}_{k\in\N}$, with $x_k = x^\delta_{m,\alpha_k}$ for each $k\in\N$. Since $f_{x_0}(x) = 0$ if, and only if, $x=x_0$, it follows that,
$$
H(x_k) \leq \displaystyle\frac{1}{\alpha_k}\left(\|F(x_0) - \yd\|^p + \alpha_k f_{x_0}(x_0)\right) = \displaystyle\frac{1}{\alpha_k}\|F(x_0) - \yd\|^p\rightarrow 0.
$$
Then, $f_{x_0}(x_k)\rightarrow 0$ and $x_k \rightharpoonup x_0$. See \cite[Lemma~2.2]{anram}. By the weak continuity of $F$ and the weakly lower semi-continuity of the norm of $Y$, it follows that
$$
\|F(x_0) - \yd\| \leq \displaystyle\liminf_{k\rightarrow\infty} \|F(x_k) - \yd\|.
$$
This implies that there exists a sufficiently large $\hat{k}$ such that $\|F(x_{\hat{k}}) - \yd\| > \tau_2(\delta + \gamma_m)$.
Then, set $\overline{\alpha} := \alpha_{\hat{k}}$.

\end{proof}
Following \cite{anram} we have the following:
\begin{ass}
For each $m \in \N$ fixed, there is no $\alpha > 0$ such that the estimate
\begin{equation}
 \|F(\underline{x})-\yd\| < \tau_1(\delta + \gamma_m) \leq \tau_2(\delta + \gamma_m) < \|F(\overline{x})-\yd\|.
\label{cond3}
\end{equation}
is satisfied for some $\underline{x},\overline{x} \in M_{\alpha,m}$.
\label{ass4}
\end{ass}

The following result shows the well-posedness of the discrepancy principle of Definition~\ref{def6}.
\begin{theo}
Let Assumption~\ref{ass00}, \ref{ass0} and \ref{ass3} hold. Then, by Proposition~\ref{pr7} there exists an $\alpha := \alpha(\delta,\yd,\gamma_m)>0$ and an $\xd \in M_{m,\alpha}$, such that
\begin{equation}
\tau_1(\delta + \gamma_m) \leq \|F(\xd) - \yd\| \leq \tau_2(\delta + \gamma_m).
\label{dp1}
\end{equation}
\label{theo:morozov1}
\end{theo}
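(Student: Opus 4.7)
The plan is to build the desired $\alpha$ by a sup/inf construction between the two values produced by Proposition~\ref{pr7}, and then use the monotonicity, continuity, and stability facts recorded in Lemmas~\ref{lemoro1}, \ref{lemoro2}, \ref{lemoro3} together with Assumption~\ref{ass4} to show that one of the two relevant minimizers at the critical parameter must lie in the discrepancy window $[\tau_1(\delta+\gamma_m),\tau_2(\delta+\gamma_m)]$.

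First I would invoke Proposition~\ref{pr7} (treating the trivial case $\|F(P_m x_0)-\yd\|\leq \tau_2(\delta+\gamma_m)$ separately, in which $x_0$ or a suitable minimizer already realizes the discrepancy) to obtain $\underline{\alpha}<\overline{\alpha}$ with $L(x^\delta_{m,\underline{\alpha}}) < \tau_1(\delta+\gamma_m)$ and $L(x^\delta_{m,\overline{\alpha}}) > \tau_2(\delta+\gamma_m)$. Then I would define
\[
 \alpha^{*} := \sup\bigl\{\alpha\in[\underline{\alpha},\overline{\alpha}] ~:~ \sup_{x\in M_{m,\alpha}} L(x) < \tau_1(\delta+\gamma_m) \bigr\},
\]
which is well-defined and lies in $[\underline{\alpha},\overline{\alpha}]$ by Lemma~\ref{lemoro1}. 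By monotonicity of $L$ along $M_{m,\alpha}$, for every $\alpha<\alpha^{*}$ one has $\sup_{x\in M_{m,\alpha}}L(x)<\tau_1(\delta+\gamma_m)$, while for every $\alpha>\alpha^{*}$ one has $\sup_{x\in M_{m,\alpha}}L(x)\geq\tau_1(\delta+\gamma_m)$.

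Next I would extract two sequences of Tikhonov minimizers approaching $\alpha^{*}$ from either side. Pick $\alpha_k\nearrow\alpha^{*}$ and, via Lemma~\ref{lemoro3}, choose $x_k\in M_{m,\alpha_k}$ attaining $\sup_{x\in M_{m,\alpha_k}}L(x)$, so $L(x_k)<\tau_1(\delta+\gamma_m)$. Similarly pick $\beta_k\searrow\alpha^{*}$ and $y_k\in M_{m,\beta_k}$ attaining the corresponding supremum, giving $L(y_k)\geq\tau_1(\delta+\gamma_m)$. Using continuity of $I$ (Lemma~\ref{lemoro2}) to bound the Tikhonov functionals uniformly, and then Assumption~\ref{ass0} (weak precompactness of level sets, weak continuity of $F$, weak closedness of $\mathcal{D}_m$) together with lower semicontinuity of $f_{x_0}$, I can pass to weakly convergent subsequences with limits $x^{*},\hat{x}\in\mathcal{D}_m$. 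The continuity of $I$ at $\alpha^{*}$ forces $\mathcal{F}^{\yd}_{\alpha^{*},x_0}(x^{*})=I(\alpha^{*})=\mathcal{F}^{\yd}_{\alpha^{*},x_0}(\hat{x})$, so $x^{*},\hat{x}\in M_{m,\alpha^{*}}$. From $L(x_k)<\tau_1(\delta+\gamma_m)$ and weak lower semicontinuity of $\|\cdot\|$ combined with weak continuity of $F$, one obtains $L(x^{*})\leq\tau_1(\delta+\gamma_m)$; similarly $L(\hat{x})\geq\tau_1(\delta+\gamma_m)$.

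Finally I would close the argument via Assumption~\ref{ass4}: if we had strict inequalities $L(x^{*})<\tau_1(\delta+\gamma_m)$ and $L(\hat{x})>\tau_2(\delta+\gamma_m)$, then $\underline{x}:=x^{*}$ and $\overline{x}:=\hat{x}$ would witness \eqref{cond3}, contradicting the assumption. Hence either $L(x^{*})=\tau_1(\delta+\gamma_m)$, so $x^{*}$ itself satisfies \eqref{dp1}, or $\tau_1(\delta+\gamma_m)\leq L(\hat{x})\leq\tau_2(\delta+\gamma_m)$, so $\hat{x}$ does. The main obstacle I anticipate is the technical verification that the weak limits $x^{*}$ and $\hat{x}$ actually belong to $M_{m,\alpha^{*}}$ and that $L(x_k)\to L(x^{*})$, $L(y_k)\to L(\hat{x})$ hold with the correct inequality direction; this is where the continuity of $I$ (guaranteeing $f_{x_0}(x_k)\to f_{x_0}(x^{*})$ along with the norm, à la the stability statement of Theorem~\ref{stability}) is indispensable.
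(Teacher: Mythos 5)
Your construction is sound and, although organized differently, rests on the same pillars as the paper's argument. The paper proceeds by contradiction: assuming no $\alpha$ satisfies \eqref{dp1}, it partitions $\R_+$ into the set $A$ of parameters whose minimizers all have residual $<\tau_1(\delta+\gamma_m)$ and the set $B$ with residual $>\tau_2(\delta+\gamma_m)$, puts $\overline{\alpha}=\sup A$, and contradicts membership of $\overline{\alpha}$ in either set using the one-sided limits $\lim_k L(x_k)=\sup_{x\in M_{m,\overline{\alpha}}}L(x)$ (approach from above), respectively $=\inf_{x\in M_{m,\overline{\alpha}}}L(x)$ (from below), with Assumption~\ref{ass4} guaranteeing $A\cap B=\emptyset$. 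Your $\alpha^{*}$ plays exactly the role of $\sup A$, but you argue directly: you produce two elements of $M_{m,\alpha^{*}}$ whose residuals bracket $\tau_1(\delta+\gamma_m)$, and Assumption~\ref{ass4} then forces one of them into the window; your final case analysis is complete. Like the paper, you use Assumption~\ref{ass4} even though it is not listed in the theorem, and your parenthetical ``trivial case'' $\|F(P_m x_0)-\yd\|\leq\tau_2(\delta+\gamma_m)$ is not actually resolved (if all attainable residuals stay below $\tau_1(\delta+\gamma_m)$, the window cannot be hit); but the paper also tacitly assumes the hypothesis of Proposition~\ref{pr7}, so this does not put you behind its own level of rigor.

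The one step that fails as justified is ``similarly $L(\hat{x})\geq\tau_1(\delta+\gamma_m)$'': weak lower semicontinuity of the norm only gives $L(\hat{x})\leq\liminf_k L(y_k)$, which is the wrong direction for a lower bound. You flag this yourself, and it closes with exactly the tool you name. Since $I(\beta_k)\to I(\alpha^{*})$ by Lemma~\ref{lemoro2}, $f_{x_0}(y_k)$ is bounded and $\beta_k\to\alpha^{*}$, weak lower semicontinuity of both terms gives $\|F(\hat{x})-\yd\|^p+\alpha^{*}f_{x_0}(\hat{x})\leq\lim_k I(\beta_k)=I(\alpha^{*})$, so $\hat{x}\in M_{m,\alpha^{*}}$ with equality; then the splitting argument $\limsup_k\|F(y_k)-\yd\|^p\leq\lim_k I(\beta_k)-\liminf_k\alpha^{*}f_{x_0}(y_k)\leq I(\alpha^{*})-\alpha^{*}f_{x_0}(\hat{x})=\|F(\hat{x})-\yd\|^p$ yields $L(y_k)\to L(\hat{x})$, hence $L(\hat{x})\geq\tau_1(\delta+\gamma_m)$. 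The same argument on the other side gives $x^{*}\in M_{m,\alpha^{*}}$ and $L(x_k)\to L(x^{*})\leq\tau_1(\delta+\gamma_m)$. This is precisely the one-sided continuity of $L$ at the critical parameter that the paper invokes without re-deriving (through the quoted equalities with $\sup$ and $\inf$ over $M_{m,\overline{\alpha}}$). With that inserted, your proof is complete.
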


\begin{proof}
Let us assume that there exists no $\alpha > 0$ satisfying (\ref{dp1}). Define the sets:
$$
\begin{array}{c}
 A = \{ \alpha > 0 ~:~ \|F(\xd) - \yd\| < \tau_1(\delta + \gamma_m)~ \text{ for some } \xd \in M_{m,\alpha}\}\\
 B = \{ \alpha > 0 ~:~ \|F(\xd) - \yd\| > \tau_2(\delta + \gamma_m)~ \text{ for some } \xd \in M_{m,\alpha}\}
\end{array}
$$
We claim that the set $A$ is nonempty. Indeed, if $\alpha$ is sufficiently small, $\alpha f_{x_0}(P_m\xdag) < (\tau_1^p-1)(\delta + \gamma_m)^p$. Hence,
$$
\|F(\xd) - \yd\|^p+\alpha f_{x_0}(\xd) \leq \|F(P_m\xdag) - \yd\|^p + \alpha f_{x_0}(P_m\xdag) \leq \tau_1^p(\delta + \gamma_m).
$$
Note that, if $\alpha \in A$, then the condition $\|F(\xd) - \yd\| < \tau_1(\delta + \gamma_m)$ has to be satisfied for every $\xd \in M_{m,\alpha}$. Otherwise, (\ref{dp1}) or (\ref{cond3}) could be satisfied. It follows that, the similar assertion is true for $\alpha \in B$.

From Equation~(\ref{cond3}) it follows that $A\cap B = \emptyset$. Since we are assuming that there is no $\alpha > 0$ satisfying (\ref{dp1}), it follows that $A\cup B = \R_+$.

Define $\overline{\alpha} := \sup A$. By Proposition~\ref{pr7} and since $L$ is non-decreasing, it follows that $\overline{\alpha}<+\infty$. Then, it must belongs to $A$ or $B$.

If $\overline{\alpha} \in A$, then, it is possible to find a sequence $\{\alpha_k\}_{k\in\N} \subset B$ converging to $\overline{\alpha}$ with $\alpha_k > \overline{\alpha}$, since, $A\cup B = \R_+$ and $L$ is non-decreasing. Thus, let us select a sequence of minimizers $\{x_k\}_{k\in\N}$ with $x_k = x^\delta_{m,\alpha_k} \in M_{m,\alpha_k}$. 
We have the estimates:
$$
\tau_2(\delta+\gamma_m) < \displaystyle\lim_{k\rightarrow\infty}\|F(x_k) - \yd\| = \sup_{x\in M_{m,\overline{\alpha}}}\|F(x) - \yd\| < \tau_1 (\delta+\gamma_m).
$$
This is a contradiction, since $\tau_1 < \tau_2$.

On the other hand, assume that $\overline{\alpha}\in B$. Then, it is possible to find a sequence $\{\alpha_k\}_{k\in\N} \subset A$ converging to $\overline{\alpha}$, with $\alpha_k < \overline{\alpha}$. This follows by the same argument mentioned above. By selecting a sequence of minimizers $\{x_k\}_{k\in\N}$ with $x_k = x^\delta_{m,\alpha_k} \in M_{m,\alpha_k}$, we have the following estimates:
$$
\tau_2(\delta+\gamma_m) < \displaystyle\inf_{x\in M_{m,\overline{\alpha}}}\|F(x) -\yd\| = \displaystyle\lim_{k\rightarrow\infty}\|F(x_k) -\yd\| \leq \tau_1(\delta+\gamma_m).
$$

\end{proof}

We have thus established the well-posedness of the discrete Morozov's principle. See Appendix~\ref{app:morozov} for the corresponding regularizing properties.

Under the present setup, if we choose $m\in \N$ sufficiently large and such that
\begin{equation}
\gamma_m \leq \left(\displaystyle\frac{\lambda}{\tau_2} - 1 \right)\delta
\label{def4}
\end{equation}
is satisfied with $\lambda >\tau_2 > 1$.
Then, for this same $m\in\N$, it follows that, when $\alpha$ is chosen through Definition~\ref{def6}, the discrepancy
\begin{equation}
 \tau_1 \delta \leq \|F(\xd) - \yd\| \leq \lambda\delta,
\label{discm}
\end{equation}
is satisfied with $\xd$ a solution of (\ref{tik2}). This follows since, $\tau_1\delta \leq \tau_1(\delta + \gamma_m)$ and $\tau_2(\delta+\gamma_m) \leq \lambda \delta$.

This leads us to the proof of Proposition~\ref{pro_disc}:

\begin{proof} Let us consider the sets $M_{\alpha,m}$ of solutions of Problem~\ref{tik2}, corresponding to $\alpha$ and $m$. Recall that, by Theorem~\ref{existence}, these sets are nonempty. We also define the sets:
$$
A_{\delta,m} := \{ x\in \mathcal{D}_m ~:~ \tau \delta \leq \|F(x) - \yd\| \leq \lambda\delta \}.
$$
Note that, it may occur that $M_{\alpha,m}\cap A_{\delta,m} = \emptyset$. Thus, assuming that $\gamma_m = \mathcal{O}(\delta)$, by Theorem~\ref{theo:morozov1} there exist some $\alpha>0$ and $m\in \N$ such that $M_{\alpha,m}\cap A_{\delta,m} \not= \emptyset$. More precisely, 
let $m$ satisfy 
$\gamma_m \leq (\lambda/\tau_2 - 1)\delta$, with $\tau < \tau_2 < \lambda$. Also, let $\alpha$ be chosen through Definition~\ref{def6}, with $\tau_1 = \tau$. Then, for such $m$ and $\alpha$, it follows that
$$
\tau \delta \leq \tau(\delta + \gamma_m) \leq \|F(\xd) - \yd\| \leq \tau_2(\delta + \gamma_m) \leq \lambda \delta.
$$
Thus, $M_{\alpha,m}\cap A_{\delta,m} \not= \emptyset$.
\end{proof}

\begin{rem}
Therefore, the problem of finding $m$ and $\alpha$ through the discrepancy principle of Definition~\ref{def7} is well-posed. 
\end{rem}

\section{Regularizing Properties}\label{sec:convergence}
In the previous section we have established the well-posedness of the discrepancy principle \eqref{discm2} as a rule to select the parameters $m$ and $\alpha$, with fixed data $\yd$ and noise level $\delta$. We now explore some corresponding regularizing properties. 

\begin{df}
Let $\varepsilon \in (0,\tau-1)$ be fixed. Then, for every $m \in \N$, define the sets:
$$
H_m := \{x \in \mathcal{D}_m ~:~ \|F(x) - \yd \| < (\tau-\varepsilon)\delta\},
$$
with the same $\tau$ of Definition~\ref{def7}.
\end{df}

The sets defined above shall be used in the proof of convergence results, whenever we need to assume that $H_m$ is nonempty. Observe that, for $m$ sufficiently large, $H_m$ is indeed nonempty, since:
$$
H_m = \left(F^{-1}\left(B(\yd,(\tau-\varepsilon)\delta)\right)\cap\domainf \right)\cap X_m,
$$
where $B(\yd,(\tau-\varepsilon)\delta)$ is the open ball centered in $\yd$ and with radius $(\tau-\varepsilon)\delta$. 
It also follows that 
$$
\xdag \in \displaystyle\overline{\bigcup_{m\in\N} H_m}.
$$
Then, it is possible to find a sequence $\{\overline{x_k}\}_{k\in\N}$, with $\overline{x_k}\in H_{m_k}$, converging strongly to $\xdag$.

The following proposition states a connection between the discrete setting and the continuous one. 

\begin{pr} 
Let us consider the limit $m\rightarrow \infty$, with $\delta>0$ fixed. We select sequences $\{\alpha_k\}_{k\in\N}$ and $\{x^\delta_{m_k,\alpha_k}\}_{k\in\N}$, such that $\alpha_k\rightarrow\widetilde{\alpha}$ and $x^\delta_{m_k,\alpha_k}\rightharpoonup \widetilde{x}$, where, for each $k\in\N$, $x^\delta_{m_k,\alpha_k}$ is a solution of Problem~\ref{tik2} in $\mathcal{D}_{m_k}$ and $\alpha_k$ is the corresponding regularization parameter. Assume that, for each $k$, $x^\delta_{m_k,\alpha_k}$ satisfies the discrepancy principle \eqref{discm2}. Then, $\widetilde{x}$ is a Tikhonov minimizer in $\domainf$ with regularization parameter $\widetilde{\alpha}$, satisfying the discrepancy principle
\begin{equation}
 \tau \delta \leq \|F(\widetilde{x}) -\yd\| \leq \lambda \delta.
\label{morozov_cont}
\end{equation}
\label{pr:conv}
\end{pr}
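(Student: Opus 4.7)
The aim is to verify at the limit the two characteristic properties used to construct each $x_k := x^\delta_{m_k,\alpha_k}$: that $\widetilde{x}$ minimizes $\mathcal{F}^{\yd}_{\widetilde{\alpha},x_0}$ over $\domainf$ and that it obeys the Morozov bounds \eqref{morozov_cont}. The first point is to check $\widetilde{x}\in\domainf$. Testing the minimality of $x_k$ in $\mathcal{D}_{m_k}$ against $x_0\in\mathcal{D}_{m_k}$ (valid for $m_k\geq m_0$) and using \eqref{condm} yields $\alpha_k f_{x_0}(x_k)\leq \|F(x_0)-\yd\|^p$. Provided $\widetilde{\alpha}>0$, this places $\{x_k\}$ in a bounded level set $\mathcal{M}_{\widetilde{\alpha}}(\rho)$ for $k$ large, and the weak closedness from Assumption~\ref{ass0} gives $\widetilde{x}\in\mathcal{M}_{\widetilde{\alpha}}(\rho)\subset\domainf$; the degenerate case $\widetilde{\alpha}=0$ is incompatible with the lower discrepancy bound $\tau\delta\leq\|F(x_k)-\yd\|$ in the regularized regime and can be treated separately.

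For the minimization property, I would pick any competitor $z\in\domainf$ and use the projection $P_{m_k}z\in\mathcal{D}_{m_k}$. Property \eqref{def1} yields $P_{m_k}z\to z$ strongly, Lemma~\ref{lem1} gives $F(P_{m_k}z)\to F(z)$, and continuity of the convex functional $f_{x_0}$ on the interior of $\mathcal{D}(f_{x_0})$ (Assumption~\ref{ass00}) gives $f_{x_0}(P_{m_k}z)\to f_{x_0}(z)$. The minimality of $x_k$ reads
\[
\|F(x_k)-\yd\|^p+\alpha_k f_{x_0}(x_k) \;\leq\; \|F(P_{m_k}z)-\yd\|^p+\alpha_k f_{x_0}(P_{m_k}z).
\]
Taking $\liminf_k$ on the left (via weak continuity of $F$ on the level set, weak lower semicontinuity of $\|\cdot\|$ in $Y$, weak lower semicontinuity of $f_{x_0}$, and $\alpha_k\to\widetilde{\alpha}$) and $\lim_k$ on the right produces $\mathcal{F}^{\yd}_{\widetilde{\alpha},x_0}(\widetilde{x}) \leq \mathcal{F}^{\yd}_{\widetilde{\alpha},x_0}(z)$; as $z\in\domainf$ is arbitrary, $\widetilde{x}$ is a Tikhonov minimizer at parameter $\widetilde{\alpha}$.

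The upper discrepancy bound $\|F(\widetilde{x})-\yd\|\leq\lambda\delta$ follows immediately from weak lower semicontinuity of the norm applied to $\|F(x_k)-\yd\|\leq\lambda\delta$. The lower bound $\tau\delta\leq\|F(\widetilde{x})-\yd\|$ is the main obstacle, since weak convergence of $F(x_k)$ to $F(\widetilde{x})$ only yields lower semicontinuity in the wrong direction. I would sidestep this by plugging $z=\widetilde{x}$ into the minimality inequality just derived, which, combined with weak lower semicontinuity, forces
\[
\|F(x_k)-\yd\|^p+\alpha_k f_{x_0}(x_k) \;\longrightarrow\; \|F(\widetilde{x})-\yd\|^p+\widetilde{\alpha}\,f_{x_0}(\widetilde{x}).
\]
Setting $a_k:=\|F(x_k)-\yd\|^p$, $b_k:=\alpha_k f_{x_0}(x_k)$, $A:=\|F(\widetilde{x})-\yd\|^p$ and $B:=\widetilde{\alpha}f_{x_0}(\widetilde{x})$, one has $a_k,b_k\geq 0$, $A\leq\liminf a_k$, $B\leq\liminf b_k$, and $a_k+b_k\to A+B$. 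The elementary estimate $\limsup a_k\leq(A+B)-\liminf b_k\leq A$ then upgrades each term to a genuine limit, giving $\|F(x_k)-\yd\|\to\|F(\widetilde{x})-\yd\|$. Passing to the limit in $\tau\delta\leq\|F(x_k)-\yd\|$ completes the proof.
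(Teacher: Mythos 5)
Your proposal is correct, and its skeleton (project an arbitrary competitor $z\in\domainf$ onto $\mathcal{D}_{m_k}$, use continuity of $F$ and of $f_{x_0}$ on the interior of $\mathcal{D}(f_{x_0})$ to pass to the limit on the right-hand side, and weak lower semicontinuity plus weak continuity of $F$ on the left) is the same as the paper's. The genuine difference is the lower discrepancy bound: the paper simply writes the chain $\tau\delta\leq\liminf_k\|F(x^\delta_{m_k,\alpha_k})-\yd\|\leq\limsup_k\|F(x^\delta_{m_k,\alpha_k})-\yd\|\leq\lambda\delta$ and concludes \eqref{morozov_cont}, which by weak lower semicontinuity alone only delivers the upper bound; you correctly identify this as the delicate point and close it by testing the discrete minimality against $P_{m_k}\widetilde{x}$, obtaining convergence of the Tikhonov values and then, by the additive splitting $\limsup a_k\leq(A+B)-\liminf b_k$, strong convergence of the residuals $\|F(x^\delta_{m_k,\alpha_k})-\yd\|\to\|F(\widetilde{x})-\yd\|$, after which the lower bound passes to the limit. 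This is the standard residual-convergence trick (the same device used in stability proofs for Tikhonov minimizers), and it makes the argument complete where the paper is terse. Two small points you should tighten: the exclusion of $\widetilde{\alpha}=0$ is only sketched --- it follows by testing minimality against $P_{m_k}\xdag$, which gives $\|F(x^\delta_{m_k,\alpha_k})-\yd\|^p\leq(\delta+\gamma_{m_k})^p+\alpha_k f_{x_0}(P_{m_k}\xdag)\to\delta^p<\tau^p\delta^p$ when $\alpha_k\to0$ and $m_k\to\infty$, contradicting \eqref{discm2} --- and your membership argument $\widetilde{x}\in\domainf$ via level sets is fine but can be replaced, as in the paper, by the weak closedness of $\domainf$ directly.
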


\begin{proof} Choose a sufficiently large $m_0\in\N$, such that, for every $m\geq m_0$, the set $H_m$ is nonempty. Then, whenever $m \geq m_0$, we choose $\alpha = \alpha(\delta,\yd,m)$ satisfying \eqref{morozov_cont}, with $x^\delta_{m,\alpha}$ a corresponding Tikhonov solution in $\mathcal{D}_{m}$.

By Lemma~\ref{lemoro2}, the functionals defined in the Equations \eqref{func1}, \eqref{func2} and \eqref{func3} are monotone. It follows that $\displaystyle\liminf_{m\rightarrow\infty}\alpha(\delta,\yd,m)$ and $\displaystyle \limsup_{m\rightarrow\infty} \alpha(\delta,\yd,m)$ have finite values.

On the other hand, since the level sets of the Tikhonov functional are weakly pre-compact, it follows that the sequence $\{x^\delta_{m,\alpha_m}\}_{m\in\N}$, of corresponding Tikhonov minimizers, is weakly pre-compact.

Let us define $\widetilde{\alpha} = \displaystyle\liminf_{m\rightarrow\infty}\alpha_m$. We select the convergent subsequences $\{\alpha_{m_k}\}_{k\in\N}$ and $\{x^\delta_{m_k,\alpha_k}\}_{k\in\N}$, such that $\alpha_{m_k}\rightarrow\widetilde{\alpha}$ and $x^\delta_{m_k,\alpha_k}\rightharpoonup \widetilde{x}$, where $\widetilde{x} \in \domainf$. The convergence follows since $\domainf$ is weakly closed.

Recall that $F$ is weakly continuous. Then, we have the estimates:
$$
\tau \delta \leq \displaystyle\liminf_{k\rightarrow\infty}\|F(x^\delta_{m_k,\alpha_k})-\yd\|\leq \displaystyle\limsup_{k\rightarrow\infty}\|F(x^\delta_{m_k,\alpha_k})-\yd\|\leq \lambda \delta.
$$
This leads to $ \tau \delta \leq \|F(\widetilde{x}) -\yd\| \leq \lambda \delta$.

We now claim that $\widetilde{x}$ is a Tikhonov minimizer in $\domainf$, with regularization parameter $\widetilde{\alpha}$. Indeed, for an arbitrary and fixed $x\in\domainf$, we choose a sequence $\{x_k\}_{k\in\N}$, with $x_k\in\mathcal{D}_{m_k}$ for each $k \in \N$ and $x_k\rightarrow x$ strongly.  
Since $\domainf$ is in the interior of $\mathcal{D}(f_{x_0})$ and the operator $F$ is continuous, it follows that
\begin{equation}
\displaystyle\liminf_{k\rightarrow\infty}\mathcal{F}^\delta_{\alpha_{m_k},x_0}(x_k) =
\displaystyle\lim_{k\rightarrow\infty}\mathcal{F}^\delta_{\alpha_{m_k},x_0}(x_k) = \mathcal{F}^\delta_{\widetilde{\alpha},x_0}(x).
\label{proofeq1}
\end{equation}

\noindent By the (weak) lower semi-continuity of $f_{x_0}(\cdot)$ and $\|F(\cdot) - \yd\|$, it follows that:
\begin{equation}
 \|F(\widetilde{x}) - \yd\|^p + \widetilde{\alpha}f_{x_0}(\widetilde{x}) \leq
\displaystyle\liminf_{k\rightarrow\infty}\left\{\|F(x^\delta_{m_k,\alpha_{m_k}}) - \yd\|^p + \alpha_{m_k}f_{x_0}(x^\delta_{m_k,\alpha_{m_k}})\right\}.
\label{proofeq2}
\end{equation}

\noindent Since, for each $m$, $x^\delta_{m_k,\alpha_{m_k}}$ is a Tikhonov minimizer in $\mathcal{D}_{m_k}$, with regularization parameter $\alpha_{m_k}$. Thus, $\mathcal{F}^\delta_{\alpha_{m_k},x_0}(x^\delta_{m_k,\alpha_{m_k}})\leq \mathcal{F}^\delta_{\alpha_{m_k},x_0}(x_k)$, for every $k$. Therefore, by applying $\liminf$ on both sides and considering Equations~\eqref{proofeq1} and \eqref{proofeq2}, it follows that:
$$
 \|F(\widetilde{x}) - \yd\|^p + \widetilde{\alpha}f_{x_0}(\widetilde{x}) \leq  \|F(x) - \yd\|^p + \widetilde{\alpha}f_{x_0}(x).
$$
Since $x$ was arbitrarily chosen in $\domainf$, the assertion follows.
\end{proof}

\subsection*{Convergence}
We now present results concerning the convergence of the approximate solutions.

\begin{theo}
 Let $m$ and $\alpha$ satisfy the discrepancy principle \eqref{discm2}. Let us consider the sequence of real numbers $\{\delta_k\}_{k\in\N}$, satisfying $\delta_k>0$ and $\delta_k \rightarrow 0$. Then, every sequence of regularized solutions $\{x_k\}_{k\in\N}$, with $x_k = x^{\delta_k}_{m_k,\alpha_k}$, has a subsequence converging weakly to some least-square solution of Problem~\ref{prob1}. Moreover, If there exists a unique solution $\xdag$ for Problem~\ref{prob1}, then the whole sequence converges weakly to $\xdag$.
\label{prop:conv1}
\end{theo}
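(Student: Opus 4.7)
The plan is to establish boundedness of the sequence $\{x_k\}$ using Tikhonov minimality against a well-chosen test element in $\mathcal{D}_{m_k}$, extract a weakly convergent subsequence, identify its weak limit as a least-square solution via the upper discrepancy bound and the weak continuity of $F$, and then apply the standard subsequence principle to handle the uniqueness case.

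First, I would select $u_k := P_{m_k}\xdag \in \mathcal{D}_{m_k}$, noting that $u_k \to \xdag$ strongly by the density hypothesis \eqref{def1}. Since the discrepancy rule in Definition~\ref{def7} picks the \emph{greatest} admissible $m$, one may assume $m_k$ is large enough so that $\gamma_{m_k} \leq (\tau-\varepsilon-1)\delta_k$, i.e.\ $u_k \in H_{m_k}$ and $\|F(u_k)-y^{\delta_k}\| \leq (\tau-\varepsilon)\delta_k$. Plugging $u_k$ into the Tikhonov-minimality inequality and using the lower discrepancy $\|F(x_k)-y^{\delta_k}\| \geq \tau\delta_k$, I obtain
\begin{equation*}
\tau^p\delta_k^p + \alpha_k f_{x_0}(x_k) \leq (\tau-\varepsilon)^p\delta_k^p + \alpha_k f_{x_0}(u_k),
\end{equation*}
so that $f_{x_0}(x_k) \leq f_{x_0}(u_k)$. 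Since $\xdag \in \domainf \subset \mathrm{int}\,\mathcal{D}(f_{x_0})$, the convex functional $f_{x_0}$ is continuous at $\xdag$, giving $f_{x_0}(u_k) \to f_{x_0}(\xdag)$ and hence uniform boundedness of $\{f_{x_0}(x_k)\}$. Coercivity of $f_{x_0}$ (Assumption~\ref{ass00}) then yields norm boundedness of $\{x_k\}$.

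By reflexivity of $X$, pass to a subsequence $x_{k_l} \rightharpoonup \tx$, where $\tx \in \domainf$ since $\domainf$ is convex and strongly closed, hence weakly closed. The upper discrepancy combined with $\|y-y^{\delta_k}\| \leq \delta_k$ gives
\begin{equation*}
\|F(x_{k_l})-y\| \leq \|F(x_{k_l})-y^{\delta_{k_l}}\| + \|y^{\delta_{k_l}}-y\| \leq (\lambda+1)\delta_{k_l} \to 0.
\end{equation*}
Testing the Tikhonov functional also against $x_0$ (and using $f_{x_0}(x_0)=0$) bounds $\alpha_k f_{x_0}(x_k)$ above, which together with the $f_{x_0}$-bound places the whole sequence inside a common weakly pre-compact level set $\mathcal{M}_{\alpha}(\rho)$ of Assumption~\ref{ass0}. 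On this set $F$ is weakly continuous, so $F(x_{k_l}) \rightharpoonup F(\tx)$, and weak lower semi-continuity of $\|\cdot\|$ then forces $\|F(\tx)-y\| \leq \liminf_l \|F(x_{k_l})-y\| = 0$. Hence $\tx \in \mathcal{LS}$ is a least-square solution.

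For the uniqueness part, if Problem~\ref{prob1} has the unique solution $\xdag$, any subsequence of $\{x_k\}$ admits by the argument above a further weakly convergent sub-subsequence whose limit must equal $\xdag$; the usual subsequence-principle then forces $x_k \rightharpoonup \xdag$ globally. I expect the main obstacle to be the technical verification that $u_k \in H_{m_k}$ for large $k$, which is what translates the apriori discrete structure of the discrepancy rule into a useful bound $f_{x_0}(x_k)\leq f_{x_0}(u_k)$; this is ensured by the \emph{greatest-$m$} clause in Definition~\ref{def7} but needs to be stated explicitly. A secondary technical point is packaging the various $\alpha_k$-dependent estimates into a single weakly pre-compact level set so that the weak continuity of $F$ from Assumption~\ref{ass0} can be applied to the limit.
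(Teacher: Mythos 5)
Your identification of the weak limit and your treatment of uniqueness coincide with the paper's argument: the upper bound $\|F(x_{k_l})-y^{\delta_{k_l}}\|\leq\lambda\delta_{k_l}$ together with $\|y-y^{\delta_{k_l}}\|\leq\delta_{k_l}$, the weak continuity of $F$ and the weak lower semi-continuity of the norm give $\|F(\tx)-y\|=0$, and the subsequence principle settles the case of a unique solution. The genuine gap is in your boundedness step. You bound $f_{x_0}(x_k)$ by testing minimality against $u_k=P_{m_k}\xdag$, which requires $\|F(u_k)-y^{\delta_k}\|\leq(\tau-\varepsilon)\delta_k$, i.e.\ $\gamma_{m_k}\leq(\tau-\varepsilon-1)\delta_k$. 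You justify this by the ``greatest $m$'' clause of Definition~\ref{def7}, but no such bound follows from it: the theorem only assumes that the pair $(m_k,\alpha_k)$ satisfies \eqref{discm2}, and nothing forces the selected $m_k$ to be so fine that $\gamma_{m_k}$ is dominated by $(\tau-\varepsilon-1)\delta_k$; indeed the set of admissible $m$ is not controlled in that way (by Proposition~\ref{pro_disc} and Theorem~\ref{theo:morozov1} every sufficiently large $m$ with $\gamma_m\leq(\lambda/\tau_2-1)\delta$ admits a suitable $\alpha$, so a ``greatest'' admissible $m$ need not even exist, and in any case its relation to $\delta_k$ is never established). What you are implicitly importing is precisely the $H_m\neq\emptyset$-type hypothesis (in the stronger form $P_{m_k}\xdag\in H_{m_k}$, which yields $f_{x_0}(x_k)\leq f_{x_0}(u_k)$) that the paper deliberately reserves for Theorem~\ref{theo:conv}; the remark following the present theorem explicitly states that no restriction on the choice of $m$ is needed when one only seeks a least-square solution.

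The paper's own proof avoids this entirely: it extracts the weakly convergent subsequence directly from the weak pre-compactness and weak closedness of the level sets $\mathcal{M}_{\alpha}(\rho)$ of Assumption~\ref{ass0}, together with the convexity (hence weak closedness) of $\domainf$, without any estimate of the form $f_{x_0}(x_k)\leq f_{x_0}(P_{m_k}\xdag)$; the test-element inequality you use appears only in the proof of the stronger convergence result. So either replace your boundedness argument by a direct appeal to Assumption~\ref{ass0}, as the paper does, or state the additional hypothesis $P_{m_k}\xdag\in H_{m_k}$ (equivalently a quantitative bound on $\gamma_{m_k}$ in terms of $\delta_k$) explicitly --- but then you are proving a statement with hypotheses strictly stronger than those of the theorem as written.
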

\begin{proof} Let us choose the sequence $\{\delta_k\}_{k\in\N}$, satisfying $\delta_k \rightarrow 0$ monotonically. For each $\delta_k$, it follows by Proposition~\ref{pro_disc} that there exist $m_k$ and $\alpha_k$ and some regularized solution $x_k = x^{\delta_k}_{m_k,\alpha_k}$ satisfying the discrepancy principle \eqref{discm2}. Then, we can find a sequence $\{x_k\}_{k\in\N}$, associated to $\{\delta_k\}_{k\in\N}$. 

According to Assumption~\ref{ass0}, the level sets of the Tikhonov functional~\eqref{tik2} are weakly pre-compact. Since $\domainf$ is convex, the sequence $\{x_k\}_{k\in\N}$ has a weakly convergent subsequence $\{x_{k_l}\}_{k\in\N}$ with limit $\tx \in \domainf$.

By the weak lower semi-continuity of the norm and the weak continuity of $F$, it follows that:
\begin{equation}
\|F(\tx)-y\| \leq \displaystyle\liminf_{l\rightarrow \infty} \|F(x_{k_l})-y^{\delta_{k_l}}\| + \delta_{k_l}.
\label{est:ws}
\end{equation}
Since, for each $l\in \N$, $x_{k_l}$ satisfies the discrepancy principle, in particular, $\|F(x_{k_l}) - y^{\delta_{k_l}}\| \leq \lambda \delta_{k_l}$, then:
\begin{equation}
\displaystyle\liminf_{l\rightarrow \infty} \|F(x_{k_l})-y^{\delta_{k_l}}\| + \delta_{k_l} \leq
\displaystyle\lim_{l\rightarrow\infty}(\lambda + 1)\delta_{k_l} = 0.
\label{est:ws2}
\end{equation}
This leads to 
$\|F(\tx)-y\| = 0.$ 
Therefore, $\tx$ is a least-square solution of Problem~\ref{prob1}. 
If the inverse problem has a unique solution $\xdag$, then $\xdag=\tx$ and $\gamma_{m_{k_l}} \rightarrow 0$. 
Furthermore, the whole sequence $\{x_k\}_{k\in \N}$ converges weakly to $\tx$, since it is the unique cluster point of $\{x_k\}_{k\in \N}$, which is bounded.
\end{proof}
\begin{rem}
 The existence of the parameters $m$ and $\alpha$ satisfying the discrepancy principle~\eqref{discm2} is guaranteed by Proposition~\ref{pro_disc}. Intuitively, $m$ is the largest discretization level satisfying such discrepancy principle and $\alpha$ is the associated Morozov's regularization parameter. When implementing the Tikhonov regularization numerically, the discrepancy principle can be used as a stopping criterion in the minimization procedure. 

Observe also that, if we are looking for a least-square solution of Problem~\ref{prob1}, or if the inverse problem has a unique solution, then no further assumption or restriction on the choice of $m$ is needed. Thus, the convergence result holds.
\end{rem}

\begin{theo}[Convergence]
Let $m$ and $\alpha$ satisfy the discrepancy principle~\eqref{discm2}. Les us consider  the sequence of real numbers $\{\delta_k\}_{k\in\N}$, satisfying $\delta_k>0$ and $\delta_k \rightarrow 0$. If $H_m$ is nonempty for every $m$, then, every sequence of regularized solutions $\{x_k\}_{k\in\N}$, with $x_k = x^{\delta_k}_{m_k,\alpha_k}$, has a subsequence converging weakly to a $f_{x_0}$-minimizing solution of Problem~\ref{prob1}. Moreover, the following limits hold:
\begin{equation}
\begin{array}{rcl}
 \displaystyle\lim_{\delta\rightarrow 0}\alpha(\delta,\yd) = 0,&
\text{and}
& \displaystyle\lim_{\delta\rightarrow 0}\frac{\delta^p}{\alpha(\delta,\yd,m(\delta,\yd))} = 0.
\end{array}
\label{limits}
\end{equation}
\label{theo:conv}
\end{theo}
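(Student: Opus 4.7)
The plan is to combine the weak convergence already established in Theorem~\ref{prop:conv1} with a tailored recovery-sequence argument that exploits Assumption~\ref{ass3} and the two-sided discrepancy \eqref{discm2}. First I would extract, via Assumption~\ref{ass0}, a weakly convergent subsequence $x_{k_l}\rightharpoonup\widetilde{x}\in\domainf$; the upper discrepancy $\|F(x_{k_l})-y^{\delta_{k_l}}\|\leq\lambda\delta_{k_l}$ and the weak continuity of $F$ on level sets (as in the proof of Theorem~\ref{prop:conv1}) force $\widetilde{x}\in\mathcal{LS}$. The condition $\gamma_{m_k}\leq(\lambda/\tau_2-1)\delta_k$ underlying Proposition~\ref{pro_disc} moreover forces $m_{k_l}\to\infty$.

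Fix an $f_{x_0}$-minimizing solution $\xdag\in\mathcal{L}$. I would then construct a recovery sequence $\overline{x}_l\in H_{m_{k_l}}$ with $\overline{x}_l\to\xdag$ strongly and $\limsup_l f_{x_0}(\overline{x}_l)\leq f_{x_0}(\xdag)$. Using Assumption~\ref{ass3} I pick $t_l\to 0^+$ with $\|F((1-t_l)\xdag+t_lx_0)-y\|^p/t_l\to 0$, set $z_l:=(1-t_l)\xdag+t_lx_0\in\domainf$, and let $\overline{x}_l:=P_{m_{k_l}}z_l\in\mathcal{D}_{m_{k_l}}$. Convexity of $f_{x_0}$ combined with $f_{x_0}(x_0)=0$ gives $f_{x_0}(z_l)\leq(1-t_l)f_{x_0}(\xdag)$; the projection error, controlled by \eqref{def1} and the continuity of $F$ and of $f_{x_0}$ on the interior of $\mathcal{D}(f_{x_0})$, can be absorbed by a diagonal extraction that also ensures $\|F(\overline{x}_l)-y^{\delta_{k_l}}\|<(\tau-\varepsilon)\delta_{k_l}$ so that $\overline{x}_l\in H_{m_{k_l}}$.

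With the recovery sequence at hand, the minimality of $x_{k_l}$ in $\mathcal{D}_{m_{k_l}}$ yields
\[
\|F(x_{k_l})-y^{\delta_{k_l}}\|^p+\alpha_{k_l}f_{x_0}(x_{k_l})\leq\|F(\overline{x}_l)-y^{\delta_{k_l}}\|^p+\alpha_{k_l}f_{x_0}(\overline{x}_l),
\]
and inserting $\|F(x_{k_l})-y^{\delta_{k_l}}\|\geq\tau\delta_{k_l}$ together with $\|F(\overline{x}_l)-y^{\delta_{k_l}}\|<(\tau-\varepsilon)\delta_{k_l}$ gives
\[
\alpha_{k_l}\bigl(f_{x_0}(\overline{x}_l)-f_{x_0}(x_{k_l})\bigr)\geq C\delta_{k_l}^p,\qquad C:=\tau^p-(\tau-\varepsilon)^p>0.
\]
In particular $f_{x_0}(x_{k_l})\leq f_{x_0}(\overline{x}_l)$, and the weak lower semicontinuity of $f_{x_0}$ combined with $\widetilde{x}\in\mathcal{LS}$ (so $f_{x_0}(\widetilde{x})\geq f_{x_0}(\xdag)$) sandwich
\[
f_{x_0}(\xdag)\leq f_{x_0}(\widetilde{x})\leq\liminf_l f_{x_0}(x_{k_l})\leq\limsup_l f_{x_0}(\overline{x}_l)\leq f_{x_0}(\xdag).
\]
This makes $\widetilde{x}$ an $f_{x_0}$-minimizing solution, and $f_{x_0}(\overline{x}_l)-f_{x_0}(x_{k_l})\to 0$, which together with the displayed inequality forces $\delta_{k_l}^p/\alpha_{k_l}\to 0$.

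To obtain $\alpha(\delta,y^\delta)\to 0$ I argue by contradiction. If $\alpha_{k_l}\to\alpha^*>0$ along a subsequence, testing the Tikhonov functional at $P_{m_{k_l}}((1-s_n)\widetilde{x}+s_nx_0)$, letting $l\to\infty$ by arguments analogous to Proposition~\ref{pr:conv}, and using that the residual vanishes by the upper discrepancy, yields $\alpha^*s_nf_{x_0}(\widetilde{x})\leq\|F((1-s_n)\widetilde{x}+s_nx_0)-y\|^p$; Assumption~\ref{ass3} applied to the now-known $f_{x_0}$-minimizer $\widetilde{x}$ gives, after dividing by $s_n\to 0$, $\alpha^*f_{x_0}(\widetilde{x})\leq 0$, so $\widetilde{x}=x_0$ and $F(x_0)=y$, contradicting the standing hypothesis of Proposition~\ref{pr7} under which \eqref{discm2} is well-posed. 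If instead $\alpha_{k_l}\to\infty$, testing with $x_0\in\mathcal{D}_{m_{k_l}}$ gives $\alpha_{k_l}f_{x_0}(x_{k_l})\leq\|F(x_0)-y^{\delta_{k_l}}\|^p$, forcing $f_{x_0}(x_{k_l})\to 0$ and thus $x_{k_l}\rightharpoonup x_0$, incompatible with $\|F(x_{k_l})-y^{\delta_{k_l}}\|\geq\tau\delta_{k_l}$. I expect the recovery-sequence construction to be the main technical hurdle: the simultaneous diagonal control of membership in $H_{m_{k_l}}$ and of the $f_{x_0}$-limsup requires a careful interplay between $t_l$, $m_{k_l}$, and $\delta_{k_l}$ and rests crucially on Assumption~\ref{ass3}.
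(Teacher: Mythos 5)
Your overall skeleton matches the paper's: compare the minimizer $x_{k_l}$ against a competitor in $H_{m_{k_l}}$, use the two-sided discrepancy to get $f_{x_0}(x_{k_l})\leq f_{x_0}(\text{competitor})$ together with a gap of size $\mathcal{O}(\delta_{k_l}^p/\alpha_{k_l})$, sandwich with weak lower semicontinuity to conclude that $\tx$ is $f_{x_0}$-minimizing and that $\delta^p/\alpha\to 0$, and then rule out $\alpha\not\to 0$ by the convexity argument with $f_{x_0}(x_0)=0$, Assumption~\ref{ass3} and $\|F(x_0)-y\|\geq(\tau-1)\delta>0$. The difference, and the genuine gap, is your construction of the competitor. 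The paper does not build it: it uses the hypothesis $H_{m_k}\neq\emptyset$ together with the observation preceding Proposition~\ref{pr:conv} to \emph{select} $\overline{x_k}\in H_{m_k}$ with $\overline{x_k}\to\xdag$, and Assumption~\ref{ass3} enters only in the contradiction step for $\alpha\to 0$. You instead try to manufacture $\overline{x}_l:=P_{m_{k_l}}\bigl((1-t_l)\xdag+t_lx_0\bigr)$ and claim membership in $H_{m_{k_l}}$ by a ``diagonal extraction.'' This step fails as stated: you need
$\|F(\overline{x}_l)-y^{\delta_{k_l}}\|<(\tau-\varepsilon)\delta_{k_l}$, and after splitting off the noise term and the term $\|F(z_l)-y\|$ (which you can indeed make $o(\delta_{k_l})$ by shrinking $t_l$), you are left with the discretization error $\|F(P_{m_{k_l}}z_l)-F(z_l)\|$, which as $t_l\to0$ is essentially $\gamma_{m_{k_l}}$. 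The pair $(m_{k_l},\delta_{k_l})$ is prescribed by the given sequence of regularized solutions, so there is no free index to diagonalize over, and nothing in the hypotheses of Theorem~\ref{theo:conv} guarantees $\gamma_{m_{k_l}}\leq(\tau-\varepsilon-1)\delta_{k_l}$ (the bound $\gamma_m\leq(\lambda/\tau_2-1)\delta$ is part of the construction in Proposition~\ref{pro_disc}, not a hypothesis here, and even it would not give the constant you need, nor does it force $m_k\to\infty$ or $P_{m_k}z\to z$). Hence $\overline{x}_l\in H_{m_{k_l}}$ and $\limsup_l f_{x_0}(\overline{x}_l)\leq f_{x_0}(\xdag)$ are not established, and the central inequality chain has no admissible test element. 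You flagged this as the main hurdle, but it is not a technicality: it is exactly the content that the hypothesis ``$H_m$ nonempty'' (with the associated strong approximation of $\xdag$ by elements $\overline{x_k}\in H_{m_k}$) is there to supply, and the repair is to invoke it directly as the paper does rather than to derive it from Assumption~\ref{ass3}.

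Two smaller points. In your $\alpha_{k_l}\to\infty$ case, the contradiction is not with the lower bound $\|F(x_{k_l})-y^{\delta_{k_l}}\|\geq\tau\delta_{k_l}$ (which tends to $0$ as well); it is that the weak limit $x_0$ would then solve $F(x_0)=y$, contradicting $\|F(x_0)-y\|\geq(\tau-1)\delta>0$ — the same inequality used in the bounded-away-from-zero case, and in the paper. Also, in the final contradiction you apply Assumption~\ref{ass3} ``to the now-known $f_{x_0}$-minimizer $\widetilde{x}$''; this is what the paper does too, but note it silently upgrades Assumption~\ref{ass3} from one particular $\xdag$ to the specific minimizing solution obtained as the weak limit, which is worth stating explicitly.
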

\begin{proof} (i) Following the same arguments of the proof of Proposition~\ref{prop:conv1},  we choose the sequences $\{\delta_k\}_{k\in\N}$, with $\delta_k\downarrow 0$, and $\{x_k\}_{k\in\N}$, the corresponding sequence of Tikhonov minimizers, with $x_k = x^{\delta_k}_{m_k,\alpha_k}$. We can assume that the latter has a weak limit, $\tx \in \domainf$. By Equation~\eqref{est:ws}, $\tx$ is a least-square solution of Problem~\ref{prob1}. 
Let us assume also that each element of the sequence of regularization parameters $\{\alpha_k\}_{k\in\N}$, associated to $\{x_k\}_{k\in\N}$, satisfies the Morozov's principle.

Since $H_{m_k}\not=\emptyset$ for each $k\in \N$, it is possible to choose a sequence $\{\overline{x_k}\}_{k\in\N}$, such that $\overline{x_k}\in H_{m_k}$, for each $k\in \N$ and $\overline{x_k}\rightarrow \xdag$. Moreover, the following estimates hold:
$$
\begin{array}{rcl}
 \tau^p\delta_k^p + \alpha_k f_{x_0}(x_k) &\leq& \|F(x_k) - y^{\delta_k} \|^p + \alpha_k f_{x_0}(x_k)\\
 &\leq& \|F(\overline{x_k}) - y^{\delta_k} \|^p + \alpha_k f_{x_0}(\overline{x_k})\\
 &\leq& (\tau-\varepsilon)^p\delta_k^p + \alpha_k f_{x_0}(\overline{x_k}).
\end{array}
$$
The above estimates imply that,
\begin{equation}
 0 < \displaystyle\frac{\varepsilon^p\delta_k^p}{\alpha_k} \leq f_{x_0}(\overline{x_k}) - f_{x_0}(x_k),
\label{eq:comp}
\end{equation}
Thus, by the weak lower semi-continuity of $f_{x_0}$ and the above estimates, it follows that
\begin{equation}
f_{x_0}(\tx) \leq \displaystyle\liminf_{k\rightarrow \infty} f_{x_0}(x_k) \leq \displaystyle\liminf_{k\rightarrow \infty} f_{x_0}(\overline{x_k}) = f_{x_0}(\xdag).
\label{ineqsol} 
\end{equation}
Since $\tx$ is a solution of the Inverse Problem~\ref{prob1}, it follows from \eqref{ineqsol} that $\tx$ is also an $f_{x_0}$-minimizing solution. Hence, $f_{x_0}(\tx) = f_{x_0}(\xdag)$. 
Therefore, the inequalities in \eqref{eq:comp} imply that $f_{x_0}(\overline{x_k}) - f_{x_0}(x_k) \rightarrow 0$. Then, the second limit in \eqref{limits} holds.
\vspace{10pt}

\noindent(ii) Under the same hypotheses and notation of (i), assume that there exist a constant $c>0$ and a subsequence $\{\alpha_{k_l}\}_{l\in\N}$, with $\alpha_l = \alpha(\delta_{k_l},y^{\delta_{k_l}},m_{k_l})$, such that $\alpha_{k_l} > c$, for every $l\in\N$. 
Define $\underline{\alpha}:= \displaystyle\liminf_{l\rightarrow \infty} \alpha_{k_l}$.

\noindent Let us consider the corresponding subsequence of Tikhonov minimizers $\{x_{k_l}\}_{l\in\N}$. Since the original sequence is weakly convergent to $\tx$, it follows that the subsequence $\{x_{k_l}\}_{l\in\N}$ also weakly converges to $\tx$.

On the other hand, let us consider the subsequence $\{\overline{x_{k_l}}\}_{l\in\N}$, with $\overline{x_{k_l}}\rightarrow \xdag$ as in (i), we have the following estimates
$$
\begin{array}{rcl}
\underline{\alpha}f_{x_0}(\tx) = \displaystyle\liminf_{l\rightarrow\infty}[\alpha_{k_l}f_{x_0}(x_{k_l})] &\leq&
\displaystyle\liminf_{l\rightarrow\infty}\left(\|F(x_{k_l})-y^{\delta_{k_l}}\|^p + \alpha_{k_l}f_{x_0}(x_{k_l})\right)\\ &\leq& \displaystyle\liminf_{l\rightarrow\infty}\left(\|F(\overline{x_{k_l}})-y^{\delta_{k_l}}\|^p + \alpha_{k_l}f_{x_0}(\overline{x_{k_l}})\right)\\
&=&\underline{\alpha} f_{x_0}(\xdag).
\end{array}
$$
Since $\tx$ is the weak limit of $\{x_{k_l}\}_{l\in\N}$ and it is a $f_{x_0}$-minimizing solution of Problem~\ref{prob1}, it follows by the above estimates that
$$
\begin{array}{rcl}
 \|F(\tx) - y\|^p + \underline{\alpha} f_{x_0}(\tx) &\leq& 
\displaystyle\liminf_{l\rightarrow\infty}\left( \|F(x_{k_l})-y^{\delta_{k_l}}\|^p + \alpha_{k_l} f_{x_0}(x_{k_l}) \right)\\
&\leq& \displaystyle\liminf_{l\rightarrow\infty}\left(\|F(x) - y\|^p + \alpha_{k_l} f_{x_0}(x)\right)\\
&=& \|F(x) - y\|^p + \underline{\alpha} f_{x_0}(x), \text{ for every } x \in \domainf.
\end{array}
$$
Then, $\tx$ is a solution of Problem~\ref{tik2} with (noiseless) data $y$ and regularization parameter $\underline{\alpha}$.

\noindent Since $f_{x_0}$ is convex and $f_{x_0}(x_0) = 0$, the following estimate holds for every $t \in [0,1)$:
$$
f_{x_0}((1-t)\tx + t x_0) \leq (1-t)f_{x_0}(\tx) + t f_{x_0}(x_0) = (1-t)f_{x_0}(\tx).
$$
Thus, the following estimates hold:
$$
\begin{array}{rcl}
 \underline{\alpha}f_{x_0}(\tx) &\leq& \|F(\tx) - y\|^p + \underline{\alpha} f_{x_0}(\tx)\\
 &\leq & \|F((1-t)\tx + t x_0) - y\|^p + \underline{\alpha} f_{x_0}((1-t)\tx + t x_0)\\
 &\leq & \|F((1-t)\tx + t x_0) - y\|^p + \underline{\alpha} (1-t)f_{x_0}(\tx).
\end{array}
$$
This implies that,
$$
\underline{\alpha}tf_{x_0}(\tx)\leq \|F((1-t)\tx + t x_0) - y\|^p.
$$
Then, Assumption~\ref{ass3} implies that $f_{x_0}(\tx) = 0$, and thus $\tx = x_0$. This is a contradiction, since
$$
\|F(x_0)-y\| \geq \|F(x_0)-\yd\|-\|\yd-y\|\geq(\tau_1 - 1)\delta>0.
$$
Therefore, $\alpha_k = \alpha(\delta_k,y^{\delta_k},m_k) \rightarrow 0$.

\end{proof}

\begin{rem}
If we assume that $H_m$ is nonempty, then $\tx$, the weak limit of the sequence of minimizers defined in the proof of Theorem~\ref{theo:conv}, is an $f_{x_0}$-minimizing solution of Problem~\eqref{ip1}. 
Note that, it is always possible to increase the discretization level $m$, in order that $H_{m} \not=\emptyset$. 
On the other hand, if Problem~\eqref{ip1} has a unique solution, then this assumption is unnecessary. See Proposition~\ref{pr:conv}. 
\end{rem}

\subsection*{Convergence Rates}
The first theorem of the present section states the convergence rates of the regularized solutions of Problem~\ref{prob1}, associated to $m$ and $\alpha$ satisfying the discrepancy principle \eqref{discm2}, with respect to $\delta$. The following results generalize this theorem for more general forward operators, however further restrictions on the choice of $m$ are necessary. 

In the first part of this section we introduced some definitions, assumptions and auxiliary lemmas that are necessary to establish the convergence rates results.

\begin{df}[\cite{schervar},Definition~3.15]
Let $U$ denote a Banach space and 
$$f:\mathcal{D}(f) \subset U\rightarrow \R\cup\{\infty\}$$
be a convex functional with sub-differential $\partial f(u)$ at $u \in \mathcal{D}(f)$. The Bregman distance (or divergence) of $f$ at $u \in \mathcal{D}(f)$ and $\xi \in \partial f(u) \subset U^*$ is defined by
\begin{equation}
 D_{\xi}(\tilde{u},u) = f(\tilde{u}) - f(u) - \langle \xi, \tilde{u} - u\rangle,
\end{equation}
for every $\tilde{u} \in U$, where $\langle \cdot,\cdot\cdot\rangle$ is the dual product of $U^*$ and $U$. Moreover, the set
$$
\mathcal{D}_B(f) = \{x \in \mathcal{D}(f) ~: ~ \partial f(u)\not= \emptyset\}
$$
is called the Bregman domain of $f$.
\end{df}

\begin{lem}
 Let $m$ and $\alpha$ satisfy the discrepancy principle~\eqref{discm2}. Assume that for this $m$,  $H_{m}$ is nonempty and let $\xd$ be the respective minimizer of (\ref{tik1}). Then,
\begin{equation}
\begin{array}{rcl}
D_{\xi^\dagger}(\xd,\xdag) &\leq& D_{\xi^\dagger}(\overline{x_m},\xdag) + \langle \xi^\dagger, \xdag - \overline{x_m} \rangle + \langle \xi^\dagger, \xdag - \xd \rangle,
\end{array}
\end{equation}
for any $\xi^\dagger \in \partial f_{x_0}(\xdag)$.

 \label{lemma:convrates}
\end{lem}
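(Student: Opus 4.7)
The plan is to start from the definition of the Bregman distance and exploit the optimality of $\xd$ as a minimizer of the Tikhonov functional \eqref{tik1} in $\mathcal{D}_m$. By hypothesis $H_m$ is non-empty; fix any $\overline{x_m}\in H_m\subset\mathcal{D}_m$. Then $\overline{x_m}$ is an admissible competitor, so
$$
\|F(\xd)-\yd\|^p+\alpha f_{x_0}(\xd)\;\leq\;\|F(\overline{x_m})-\yd\|^p+\alpha f_{x_0}(\overline{x_m}).
$$

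Next I would combine this with the two residual bounds available: the lower part $\tau\delta\leq\|F(\xd)-\yd\|$ from the discrepancy principle \eqref{discm2}, and the strict upper bound $\|F(\overline{x_m})-\yd\|<(\tau-\varepsilon)\delta$ coming from the very definition of $H_m$. Substituting these into the previous inequality gives
$$
\tau^p\delta^p+\alpha f_{x_0}(\xd)\;\leq\;(\tau-\varepsilon)^p\delta^p+\alpha f_{x_0}(\overline{x_m}),
$$
and since $\tau^p\geq(\tau-\varepsilon)^p$ and $\alpha>0$, this yields $f_{x_0}(\xd)\leq f_{x_0}(\overline{x_m})$. (The stronger $\varepsilon$-gap version of this estimate is the one that will later drive the convergence-rate bound; only the plain comparison is needed here.)

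Finally, I would expand $D_{\xi^\dagger}(\xd,\xdag)=f_{x_0}(\xd)-f_{x_0}(\xdag)-\langle\xi^\dagger,\xd-\xdag\rangle$, replace $f_{x_0}(\xd)$ by the upper bound $f_{x_0}(\overline{x_m})$, and then rewrite the resulting difference $f_{x_0}(\overline{x_m})-f_{x_0}(\xdag)$ using the definition of $D_{\xi^\dagger}(\overline{x_m},\xdag)$. Regrouping the two pairings in $\xi^\dagger$, one involving $\xdag-\overline{x_m}$ and one involving $\xdag-\xd$, produces the stated inequality.

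I do not expect a serious obstacle in this argument; the whole step is an algebraic rearrangement combining the variational inequality for $\xd$ with the sub-gradient identity defining $D_{\xi^\dagger}$. The only point that requires care is ensuring that $\overline{x_m}$ is genuinely admissible in the Tikhonov comparison, which is precisely what the non-emptiness hypothesis on $H_m$ and the inclusion $H_m\subset\mathcal{D}_m$ (which uses $X_m\subset X_{m+1}$ and $\mathcal{D}_m=\domainf\cap X_m$) guarantee; the rest is bookkeeping of signs in the dual pairing.
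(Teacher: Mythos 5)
Your proposal follows essentially the same route as the paper's own proof: you derive $f_{x_0}(\xd)\le f_{x_0}(\overline{x_m})$ by comparing the Tikhonov values of $\xd$ and the competitor $\overline{x_m}\in H_m\subset\mathcal{D}_m$, using the lower bound $\tau\delta\le\|F(\xd)-\yd\|$ from \eqref{discm2} against the bound $\|F(\overline{x_m})-\yd\|<(\tau-\varepsilon)\delta$ defining $H_m$, and then conclude by the same Bregman-distance rearrangement the paper uses. The only caveat, which you share with the paper rather than introduce yourself, is that the honest regrouping yields the middle pairing as $\langle\xi^\dagger,\overline{x_m}-\xdag\rangle$ rather than $\langle\xi^\dagger,\xdag-\overline{x_m}\rangle$; the sign as printed in the lemma comes from the paper's loose ``$\pm$'' bookkeeping, so your argument matches the paper's proof, sign slip included.
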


\begin{proof}
\noindent Let $m=m(\delta,\yd)$ satisfy \eqref{discm2}. By the same arguments of the proof of Theorem~\ref{theo:conv} and assuming that $H_m\not=\emptyset$, it follows that $f_{x_0}(\xd) \leq f_{x_0}(\overline{x_m})$, where $\overline{x_m} \in H_m$ and $\overline{x_m}\rightarrow x^\dagger$ strongly.

\noindent For any $\xi^\dagger \in \partial f_{x_0}(\xdag)$, by Assumption~\ref{ass3} and the definition of Bregman distance, we have the following estimates:
$$
\begin{array}{rcl}
 D_{\xi^\dagger}(\xd,\xdag) &=& f_{x_0}(\xd) - f_{x_0}(\xdag) + \langle \xi^\dagger, \xd - \xdag \rangle \\
                          &\leq& f_{x_0}(\xd) \pm f_{x_0}(\overline{x_m}) - f_{x_0}(\xdag) \pm \langle \xi^\dagger, \overline{x_m} - \xdag \rangle - \langle \xi^\dagger, \xd - \xdag \rangle\\
			   &=& \underbrace{f_{x_0}(\xd) - f_{x_0}(\overline{x_m})}_{\leq 0} + D_{\xi^\dagger}(\overline{x_m},\xdag) + \langle \xi^\dagger, \xdag - \overline{x_m} \rangle + \langle \xi^\dagger, \xdag - \xd \rangle\\
			   &\leq& D_{\xi^\dagger}(\overline{x_m},\xdag) + \langle \xi^\dagger, \xdag - \overline{x_m} \rangle + \langle \xi^\dagger, \xdag - \xd \rangle.
\end{array}
$$
\end{proof}
\begin{lem}
Let $\alpha$ and $m$ be chosen through the discrepancy principle \eqref{discm2}.
Let also $\varepsilon^\prime \in (\varepsilon,\tau-1)$ be fixed and let $H_m$ be nonempty. Define
$$
\kappa := \inf \{f_{x_0}(x) ~:~ x \in \domainf \text{ and } \|F(x) - \yd\| \leq (\tau-\varepsilon^\prime)\delta \}.
$$
If $\displaystyle\inf_{x\in H_m} f_{x_0}(x) \leq  \kappa + \displaystyle\frac{\varepsilon^p\delta^p}{\alpha}$, then
\begin{equation}
 D_{\xi^\dagger}(\xd,\xdag) \leq \langle \xi^\dagger, \xdag - \xd \rangle.
\end{equation}
 \label{lemma:convrates2}
\end{lem}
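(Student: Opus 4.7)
The plan is to reduce the claimed Bregman inequality to the simpler statement $f_{x_0}(\xd) \leq f_{x_0}(\xdag)$. Indeed, by the very definition of the Bregman distance,
\[
D_{\xi^\dagger}(\xd,\xdag) = f_{x_0}(\xd) - f_{x_0}(\xdag) + \langle \xi^\dagger, \xdag - \xd\rangle,
\]
so the conclusion is equivalent to $f_{x_0}(\xd) \leq f_{x_0}(\xdag)$. Thus the entire proof reduces to exploiting the extra hypothesis on $\inf_{x\in H_m} f_{x_0}(x)$ to sharpen the bound already used in Lemma~\ref{lemma:convrates}.

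First I would observe that $\xdag$ itself lies in the admissible set defining $\kappa$. This is because $\|F(\xdag) - \yd\| = \|y - \yd\| \leq \delta \leq (\tau - \varepsilon^\prime)\delta$, using $\tau - \varepsilon^\prime > 1$. Hence $\kappa \leq f_{x_0}(\xdag)$, and it suffices to prove $f_{x_0}(\xd) \leq \kappa$.

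Next, I would exploit the Tikhonov optimality of $\xd$: for every $\overline{x} \in H_m \subset \mathcal{D}_m$ the minimizing property yields
\[
\|F(\xd) - \yd\|^p + \alpha f_{x_0}(\xd) \leq \|F(\overline{x}) - \yd\|^p + \alpha f_{x_0}(\overline{x}).
\]
Since $\xd$ satisfies the discrepancy principle \eqref{discm2}, we have $\|F(\xd) - \yd\| \geq \tau\delta$, and since $\overline{x} \in H_m$ we have $\|F(\overline{x}) - \yd\| < (\tau - \varepsilon)\delta$. Therefore
\[
\tau^p \delta^p + \alpha f_{x_0}(\xd) \leq (\tau - \varepsilon)^p \delta^p + \alpha f_{x_0}(\overline{x}).
\]
Taking the infimum over $\overline{x} \in H_m$ and invoking the hypothesis $\inf_{x\in H_m} f_{x_0}(x) \leq \kappa + \varepsilon^p \delta^p / \alpha$, this becomes
\[
\tau^p \delta^p + \alpha f_{x_0}(\xd) \leq (\tau - \varepsilon)^p \delta^p + \varepsilon^p \delta^p + \alpha \kappa.
\]

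The final step is the elementary inequality $(\tau - \varepsilon)^p + \varepsilon^p \leq \tau^p$, which holds for every $p \geq 1$ and $0 < \varepsilon < \tau$ by the superadditivity of $t\mapsto t^p$ on $[0,\infty)$. Substituting this gives $\alpha f_{x_0}(\xd) \leq \alpha \kappa$, and dividing by $\alpha > 0$ yields $f_{x_0}(\xd) \leq \kappa \leq f_{x_0}(\xdag)$, from which the Bregman estimate follows immediately. No real obstacle is expected here; the only point requiring a moment of attention is verifying that the hypothesis $\tau - \varepsilon^\prime > 1$ is what places $\xdag$ inside the admissible set for $\kappa$, and that $p \geq 1$ is indeed enough to get the superadditivity bound that makes the constants match.
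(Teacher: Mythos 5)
Your proof is correct and follows essentially the same route as the paper: use the Tikhonov minimality of $\xd$ against elements of $H_m$ together with the lower discrepancy bound, pass to the infimum and invoke the hypothesis to conclude $f_{x_0}(\xd)\leq\kappa\leq f_{x_0}(\xdag)$, and then read off the Bregman inequality. The only difference is that you spell out two steps the paper leaves implicit, namely the superadditivity bound $(\tau-\varepsilon)^p+\varepsilon^p\leq\tau^p$ and the fact that $\xdag$ lies in the admissible set defining $\kappa$.
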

\begin{proof}
By the discrepancy principle \eqref{discm2}, it follows that
\begin{multline}
\tau^p\delta^p + f_{x_0}(\xd) \leq \|F(\xd) - \yd\|^p +\alpha f_{x_0}(\xd) \\ \leq \|F(x) - \yd\|^p +\alpha f_{x_0}(x) \leq (\tau-\varepsilon)^p\delta^p + \alpha f_{x_0}(x),
\end{multline}
for every $x \in H_m$. Then,
$$
f_{x_0}(x) \geq f_{x_0}(\xd) + \displaystyle\frac{\varepsilon^p\delta^p}{\alpha}.
$$
It follows that $f_{x_0}(\xd) \leq \kappa \leq f_{x_0}(\xdag)$. Then, 
$$
D_{\xi^\dagger}(\xd,\xdag) = f_{x_0}(\xd) - f_{x_0}(\xdag) - \langle \xi^\dagger, \xd - \xdag \rangle \leq 
\langle \xi^\dagger, \xdag - \xd \rangle.
$$
\end{proof}

Note that, in Lemma~\ref{lemma:convrates2} we have assumed that $\varepsilon^\prime > \varepsilon$.  
Recall that $\varepsilon^p\delta^p/\alpha > 0$, $f_{x_0}$ is continuous in the interior of $\mathcal{D}(f_{x_0})$ and $\domainf$ is in the interior of $\mathcal{D}(f_{x_0})$. Then, the the estimate 
$$\displaystyle\inf_{x\in H_m} f_{x_0}(x) \leq  \kappa + \displaystyle\frac{\varepsilon^p\delta^p}{\alpha}$$
is satisfied for every sufficiently large $m\in \N$.

Inspired by \cite[Chapter~3]{schervar}, we have the following assumption:
\begin{ass}
 There exist $\beta_1 \in [0,1)$, $\beta_2 \geq 0$ and $\xi^\dagger \in \partial f_{x_0}(\xdag)$ such that
\begin{equation}
 \langle \xi^\dagger, \xdag - x\rangle \leq \beta_1 D_{\xi^\dagger}(x,\xdag) + \beta_2 \|F(x) - F(\xdag)\|
\end{equation}
for $x \in \mathcal{M}_{\alpha_{\max}}(\rho)$, where $\alpha_{\max},\rho > 0$ satisfy $\rho > \alpha_{\max}f_{x_0}(\xdag)$.
\label{ass2}
\end{ass}

\begin{theo}[Convergence Rates]
 Let $m$ and $\alpha$ be chosen through the discrepancy principle~\eqref{discm2} and let Assumption~\ref{ass2} be satisfied. In addition, suppose that $H_{m}$ is nonempty. If $\xd$ is a minimizer of (\ref{tik1}) and $\overline{x_m} \in H_m$, then the estimates hold:
\begin{equation}
\begin{array}{rcl}
\|F(\xd) - \yd\| \leq \lambda\delta &\text{ and }& D_{\xi^\dagger}(\xd,\xdag) \leq \displaystyle\frac{1+\beta_1}{1-\beta_1}D_{\xi^\dagger}(\overline{x_m},\xdag) 
+ \frac{\beta_2}{1-\beta_1}(\tau+\lambda+2)\delta,
\end{array}
\label{rate1}
\end{equation}
with $\xi^\dagger \in \partial f_{x_0}(\xdag)$.
Moreover, if the hypotheses of Lemma~\ref{lemma:convrates2} also hold, we have:
\begin{equation}
\begin{array}{rcl}
\|F(\xd) - \yd\| \leq \lambda\delta &\text{ and }& D_{\xi^\dagger}(\xd,\xdag) \leq \displaystyle\frac{\beta_2(1+\lambda)}{1-\beta_1}\delta.
\end{array}
\label{rate2}
\end{equation}
 \label{convrates:true}
\end{theo}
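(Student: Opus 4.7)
The first inequality in both \eqref{rate1} and \eqref{rate2} is an immediate consequence of the right-hand side of the discrepancy principle \eqref{discm2}, so the real content lies in bounding the Bregman distance $D_{\xi^\dagger}(\xd,\xdag)$. My plan is to combine Lemma~\ref{lemma:convrates} (respectively Lemma~\ref{lemma:convrates2}) with Assumption~\ref{ass2}, using the discrepancy principle to control the residual norms. A preliminary step is to verify that $\xd$ and $\overline{x_m}$ lie in the level set $\mathcal{M}_{\alpha_{\max}}(\rho)$ where Assumption~\ref{ass2} is valid; this follows by noting that the Tikhonov functional at $\xd$ is bounded by its value at $P_m\xdag$ (hence by $(\delta+\gamma_m)^p + \alpha f_{x_0}(P_m\xdag)$), and for $\overline{x_m}\in H_m$ one uses $\|F(\overline{x_m})-\yd\|<(\tau-\varepsilon)\delta$ together with coercivity of $f_{x_0}$, provided $\rho$ is chosen large enough relative to $\alpha_{\max}f_{x_0}(\xdag)$.

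For \eqref{rate1}, I would start from the estimate of Lemma~\ref{lemma:convrates},
\begin{equation*}
D_{\xi^\dagger}(\xd,\xdag) \leq D_{\xi^\dagger}(\overline{x_m},\xdag) + \langle \xi^\dagger, \xdag - \overline{x_m}\rangle + \langle \xi^\dagger, \xdag - \xd\rangle,
\end{equation*}
and apply Assumption~\ref{ass2} to each of the two linear functionals. Since $F(\xdag)=y$, the triangle inequality yields
\begin{equation*}
\|F(\overline{x_m}) - F(\xdag)\| \leq \|F(\overline{x_m}) - \yd\| + \delta \leq (\tau - \varepsilon + 1)\delta \leq (\tau+1)\delta,
\end{equation*}
and
\begin{equation*}
\|F(\xd) - F(\xdag)\| \leq \|F(\xd) - \yd\| + \delta \leq (\lambda + 1)\delta.
\end{equation*}
Substituting the two instances of Assumption~\ref{ass2} gives
\begin{equation*}
D_{\xi^\dagger}(\xd,\xdag) \leq (1+\beta_1) D_{\xi^\dagger}(\overline{x_m},\xdag) + \beta_1 D_{\xi^\dagger}(\xd,\xdag) + \beta_2(\tau+\lambda+2)\delta,
\end{equation*}
and isolating $D_{\xi^\dagger}(\xd,\xdag)$ (legitimate because $\beta_1<1$) produces exactly the bound in \eqref{rate1}.

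For \eqref{rate2}, under the additional hypothesis of Lemma~\ref{lemma:convrates2}, the cleaner estimate $D_{\xi^\dagger}(\xd,\xdag) \leq \langle \xi^\dagger, \xdag - \xd\rangle$ is available. Applying Assumption~\ref{ass2} just once and using the residual bound $\|F(\xd)-F(\xdag)\|\leq (\lambda+1)\delta$ derived above yields
\begin{equation*}
(1-\beta_1) D_{\xi^\dagger}(\xd,\xdag) \leq \beta_2(1+\lambda)\delta,
\end{equation*}
which is \eqref{rate2}.

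I expect the only nontrivial technical point to be confirming that both $\xd$ and $\overline{x_m}$ belong to $\mathcal{M}_{\alpha_{\max}}(\rho)$ so that Assumption~\ref{ass2} can be invoked; everything else is a direct chain of the Bregman identities, discrepancy estimates and the triangle inequality, absorbing the $\beta_1 D_{\xi^\dagger}(\xd,\xdag)$ term on the left.
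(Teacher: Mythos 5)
Your proposal is correct and follows essentially the same route as the paper: Lemma~\ref{lemma:convrates} (resp.\ Lemma~\ref{lemma:convrates2}) combined with Assumption~\ref{ass2}, the triangle inequality on the residuals via the discrepancy principle, and absorption of the $\beta_1 D_{\xi^\dagger}(\xd,\xdag)$ term. Your extra remark about checking that $\xd$ and $\overline{x_m}$ lie in $\mathcal{M}_{\alpha_{\max}}(\rho)$ is a point of care the paper leaves implicit, but it does not change the argument.
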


\begin{proof}
\noindent(i) The estimate
$$
\|F(\xd) - \yd\| = \mathcal{O}(\delta)
$$
follows directly by the discrepancy principle~\eqref{discm2}.

\noindent(ii) Let us prove the estimate~\eqref{rate1}. By Assumption~\ref{ass2}, if $\overline{x_m}$ is an element of $H_m$, then 
$$
\begin{array}{rcl}
\langle \xi^\dagger, \xdag - \overline{x_m} \rangle &\leq& \beta_1D_{\xi^\dagger}(\overline{x_m},\xdag) + \beta_2\|F(\overline{x_m})-F(\xdag)\|\\
 &\leq& \beta_1D_{\xi^\dagger}(\overline{x_m},\xdag) + \beta_2\|F(\overline{x_m})-\yd\| + \beta_2\delta\\
 &\leq& \beta_1D_{\xi^\dagger}(\overline{x_m},\xdag) + \beta_2(\tau+1)\delta.
\end{array}
$$
Analogously, it follows that
$$
\begin{array}{rcl}
\langle \xi^\dagger, \xdag - \xd \rangle &\leq& \beta_1D_{\xi^\dagger}(\xd,\xdag) + \beta_2(\lambda+1)\delta.
\end{array}
$$
Lemma~\ref{lemma:convrates} and the above estimates yield that
$$
\begin{array}{rcl}
D_{\xi^\dagger}(\xd,\xdag) &\leq& (\beta_1+1)D_{\xi^\dagger}(\overline{x_m},\xdag) + \beta_2(\tau+\lambda+2)\delta + \beta_1D_{\xi^\dagger}(\xd,\xdag).
\end{array}
$$
Thus,
$$
D_{\xi^\dagger}(\xd,\xdag) \leq \displaystyle\frac{1+\beta_1}{1-\beta_1}D_{\xi^\dagger}(\overline{x_m},\xdag) 
+ \frac{\beta_2}{1-\beta_1}(\tau+\lambda+2)\delta,
$$
and we have the second estimate in \eqref{rate1}.

\noindent(iii) We pass now to the proof of the second estimate in \eqref{rate2}, since the first one follows by the same arguments of the first estimate of \eqref{rate1}. By Lemma~\ref{lemma:convrates2} and Assumption~\ref{ass2}, there exist constants $\beta_1 \in [0,1)$ and $\beta_2 \geq 0$ such that
$$
D_{\xi^\dagger}(\xd,\xdag) \leq \beta_1 D_{\xi^\dagger}(\xd,\xdag) + \beta_2\|F(\xd)-F(\xdag)\|.
$$
Then,
$$
(1-\beta_1)D_{\xi^\dagger}(\xd,\xdag) \leq \beta_2\|F(\xd)-F(\xdag)\|.
$$
This implies that,
$$
D_{\xi^\dagger}(\xd,\xdag) \leq \displaystyle\frac{\beta_2}{1-\beta_1}\|F(\xd)-F(\xdag)\|.
$$
Since $\|F(\xd)-\yd\| \leq \lambda \delta$ and $\|F(\xdag) - \yd\| \leq \delta$ hold, it follows that $\|F(\xd) - F(\xdag)\| \leq (1+\lambda)\delta$. 
Then,
$$
D_{\xi^\dagger}(\xd,\xdag) \leq \displaystyle\frac{\beta_2(1+\lambda)}{1-\beta_1}\delta.
$$
\end{proof}

\begin{pr}
 Let $m$ and $\alpha$ satisfy the discrepancy principle~\eqref{discm2}. Assume that for the same $m$ and $\alpha$,  $H_{m}$ is nonempty and $\xd$ is a minimizer of \eqref{tik1}. Assume that $F$ is Frech\'et differentiable in $\xdag$ and let the source condition
\begin{equation}
 \exists ~\xi^\dagger \in \partial f_{x_0}(\xdag) \cap \mathcal{R}(F^\prime(\xdag)^*), \text{ {\em i.e.},} ~\exists~ \omega^\dagger \in Y^*\text{ s.t. } \xi^\dagger = F^\prime(\xdag)^*\omega^\dagger
\end{equation}
hold. Let also the estimates
\begin{equation}
 \|F^\prime(\xdag)(x - \xdag)\| \leq C\|F(x) - F(\xdag)\|
\end{equation}
hold with $C$ constant and $x$ in $B(\xdag,\eta)$, for some $\eta > 0$. Again, let $\overline{x_m}$ be an element of $H_m$. Then, we have the convergence rates
\begin{equation}
 \begin{array}{rcl}
\|F(\xd) - \yd\| \leq \lambda\delta &\text{ and }& D_{\xi^\dagger}(\xd,\xdag) = c\|\overline{x_m}-\xdag\| + C\|\omega^\dagger\|(2\tau + \lambda + 2)\delta. 
\end{array}
\end{equation}
\end{pr}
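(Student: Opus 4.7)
The proposition has two assertions which I would treat in turn. The first, $\|F(\xd) - \yd\|\leq\lambda\delta$, is just the upper half of the discrepancy principle~\eqref{discm2}, so it needs no further argument.

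For the Bregman estimate, I would start from Lemma~\ref{lemma:convrates}, which under the present hypotheses ($H_m$ nonempty and $\overline{x_m}\in H_m$) gives
$$D_{\xi^\dagger}(\xd,\xdag)\leq D_{\xi^\dagger}(\overline{x_m},\xdag)+\langle\xi^\dagger,\xdag-\overline{x_m}\rangle+\langle\xi^\dagger,\xdag-\xd\rangle,$$
and then bound the three right-hand terms separately. For $D_{\xi^\dagger}(\overline{x_m},\xdag)$ I would invoke the local Lipschitz continuity of the convex proper functional $f_{x_0}$ in the interior of its effective domain (Assumption~\ref{ass00} places $\xdag$ there) to produce a constant $c>0$ with $D_{\xi^\dagger}(\overline{x_m},\xdag)\leq c\|\overline{x_m}-\xdag\|$, absorbing also the $\|\xi^\dagger\|$-contribution from the linear term in the definition of the Bregman distance.

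For each of the two dual pairings, the source condition allows me to rewrite
$$\langle\xi^\dagger,\xdag-x\rangle=\langle F'(\xdag)^*\omega^\dagger,\xdag-x\rangle=\langle\omega^\dagger,F'(\xdag)(\xdag-x)\rangle,$$
so that Cauchy--Schwarz combined with the tangential nonlinearity estimate yields
$$|\langle\xi^\dagger,\xdag-x\rangle|\leq \|\omega^\dagger\|\,\|F'(\xdag)(x-\xdag)\|\leq C\|\omega^\dagger\|\,\|F(x)-F(\xdag)\|,$$
valid provided $x\in B(\xdag,\eta)$. It then remains to control $\|F(x)-F(\xdag)\|$ for $x=\xd$ and $x=\overline{x_m}$: using $F(\xdag)=y$ with $\|y-\yd\|\leq\delta$ and inserting $\yd$ via the triangle inequality, the discrepancy principle gives $\|F(\xd)-F(\xdag)\|\leq(\lambda+1)\delta$, while the definition of $H_m$ together with $\tau-\varepsilon<\tau$ gives $\|F(\overline{x_m})-F(\xdag)\|\leq(\tau+1)\delta$. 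Summing the two pairings produces a coefficient proportional to $(\tau+\lambda+2)\delta$; the precise constant $(2\tau+\lambda+2)$ in the statement follows by allocating an additional portion of the $\overline{x_m}$-pairing (rather than the Lipschitz bound) into the $\delta$-linear remainder, which is legitimate since $\|F(\overline{x_m})-F(\xdag)\|$ can also be estimated through an intermediate use of $\|F(\xd)-F(\overline{x_m})\|$.

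The main obstacle I expect is ensuring that both $\xd$ and $\overline{x_m}$ actually lie in the ball $B(\xdag,\eta)$ on which the tangential nonlinearity estimate is available. For $\overline{x_m}$ this is built into the construction, since $\overline{x_m}\to\xdag$ strongly as $m\to\infty$. For $\xd$ the argument is more delicate: one invokes the convergence theorem (Theorem~\ref{theo:conv}) and the weak pre-compactness of the Tikhonov level sets to conclude that any sequence of regularized solutions clusters weakly at an $f_{x_0}$-minimizing solution, and under the source condition (which implies uniqueness of this minimizer modulo $D_{\xi^\dagger}$) this cluster point is $\xdag$; strengthening weak to norm proximity inside $B(\xdag,\eta)$ then requires $\delta$ small enough, so the statement is implicitly asymptotic in $\delta$. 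Apart from this localization issue, the remainder is triangle-inequality bookkeeping.
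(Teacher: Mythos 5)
Your argument is correct and follows essentially the same route as the paper: the first bound is read off from \eqref{discm2}, and the Bregman bound is obtained from Lemma~\ref{lemma:convrates} by combining the local Lipschitz continuity of the convex functional $f_{x_0}$ at interior points with the source condition, the estimate $\|F'(\xdag)(x-\xdag)\|\leq C\|F(x)-F(\xdag)\|$, and the bounds $\|F(\xd)-F(\xdag)\|\leq(\lambda+1)\delta$, $\|F(\overline{x_m})-F(\xdag)\|\leq(\tau+1)\delta$. Your bookkeeping (absorbing the dual pairing inside $D_{\xi^\dagger}(\overline{x_m},\xdag)$ into the Lipschitz term) even yields the slightly sharper coefficient $(\tau+\lambda+2)$, which dominates the stated $(2\tau+\lambda+2)$ trivially, so no extra reallocation argument is needed; your remark on the implicit localization of $\xd$ and $\overline{x_m}$ in $B(\xdag,\eta)$ is a fair caveat that the paper leaves tacit.
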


\begin{proof} The first estimate follows by the discrepancy principle~\eqref{discm2}. Let $\overline{x_m}$ be an element of $H_m$. 
Recall that $f_{x_0}$ is convex, then it is locally Lipschitz continuous in the interior of $\mathcal{D}(f_{x_0})$. Since $\xdag$ and $\overline{x_m}$ are indeed interior points of $\mathcal{D}(f_{x_0})$, then we have:
$$
\begin{array}{rcl}
D_{\xi^\dagger}(\overline{x_m},\xdag) &=& f_{x_0}(\overline{x_m}) - f_{x_0}(\xdag) - \langle \xi^\dagger, \overline{x_m} - \xdag\rangle\\
&\leq& c\|\overline{x_m}-\xdag\| + \|\omega^\dagger\|C\|F(\overline{x_m}) - F(\xdag)\|\\
 &\leq& c\|\overline{x_m}-\xdag\| + C\|\omega^\dagger\|(\tau+1)\delta.
\end{array}
$$
A similar argument yields that
$$
|\langle \xi^\dagger, \xd - \xdag \rangle| \leq C\|\omega^\dagger\|(\lambda+1)\delta.
$$
Then, from the above estimates and Lemma~\ref{lemma:convrates}, we have the following:
$$
\begin{array}{rcl}
 D_{\xi^\dagger}(\xd,\xdag) &\leq& D_{\xi^\dagger}(\overline{x_m},\xdag) + \langle \xi^\dagger, \xdag - \overline{x_m} \rangle + \langle \xi^\dagger, \xdag - \xd \rangle\\
 &\leq& c\|\overline{x_m}-\xdag\| + C\|\omega^\dagger\|(2\tau + \lambda + 2)\delta.
\end{array}
$$

\end{proof}

\begin{df}[q-Coerciveness]
 Let $1 \leq q < \infty$ and $u \in \mathcal{D}(f)$ be fixed. The Bregman distance $D_{\xi}(\cdot,u)$ is called $q$-coercive with constant $\zeta >0$, if the inequality
$$
D_{\xi}(\tilde{u},u) \geq \zeta\|\tilde{u} - u\|^q_{U}
$$
is satisfied for every $\tilde{u} \in \mathcal{D}(f)$.
\label{qcoer}
\end{df}

\begin{ex}
 Let $X$ be a Hilbert space and let $f_{x_0}(x) = \|x - x_0\|^2_X$ be the quadratic Tikhonov functional. It follows that the norm of $X$ is $2$-coercive, since:
$$
D_{\xi}(x,\tx) = 2\|x-\tx\|^2_X.
$$
Then, the estimate \eqref{rate2} of Theorem~\ref{convrates} implies in $L^2$-convergence with order $\mathcal{O}(\sqrt{\delta})$. See \cite{schervar}.
\label{ex1}
\end{ex}

\begin{ex}
Assume that $X = L^p(D)$ with $D \subset \R^n$ open and bounded. Assume also that $\mathcal{D}(f) = L^1_{>0}(D)$, i.e., the set of strictly positive $L^1(D)$ functions. 
If
$$
f_{x_0}(x) = \displaystyle\int_D[\log(x(s)/x_0(s)) - (x_0(s) - x(s))]ds
$$
is the Kullback-Leibler divergence, then Theorem~\ref{convrates} implies in the $L^1$-convergence. See \cite{resa}.
\label{ex2}
\end{ex}

\begin{rem}
In addition to the hypotheses of Theorem~\ref{theo:conv}, let us assume that the norm of $X$ is $q$-coercive. Then, by \eqref{rate1} and \eqref{rate2} we have the estimates
$$
\|\xd - \xdag\| \leq \left[ \displaystyle\frac{1}{\zeta}\displaystyle\frac{1+\beta_1}{1-\beta_1}D_{\xi^\dagger}(\overline{x_m},\xdag) 
+ \frac{1}{\zeta}\frac{\beta_2}{1-\beta_1}(\tau+\lambda+2)\delta \right]^{\frac{1}{q}}
$$
and
$$
\|\xd - \xdag\| \leq \left[\displaystyle\frac{1}{\zeta}\frac{\beta_2(1+\lambda)}{1-\beta_1}\delta \right]^{\frac{1}{q}},
$$
respectively.
\end{rem}

\section{Discrete Forward Operator}\label{sec:addendum}
We now present some aspects to be considered when we replace the continuous forward operator by a finite-dimensional approximation.

Let us consider a sequence of finite-dimensional subspaces $\{Y_n\}_{n\in \N}$ of the space $Y$, such that
$$
Y_n \subset Y_{n+1} \subset ... \subset Y ~~~\text{ and }~~~ \overline{\cup_{n\in\N}Y_n} = Y.
$$
Then, we replace the continuous forward operator by some finite-dimensional approximation. In Section~\ref{sec:numerics}, we consider as an illustrative example, the discretization of the parameter to solution map that associates a diffusion parameter to the solution of a Parabolic Cauchy problem. The discretization is then defined by Crank-Nicolson scheme that solves numerically the associated parabolic partial differential equation.

In the present discrete setting, we consider the following alternative discrepancy principle:
\begin{df}
Let $\delta> 0$ and $\yd$ be fixed. For $\lambda >\tau > 1$, we choose $m,n \in \N$ and $\alpha > 0$, with $m = m(\delta,\yd)$, $n = n(\delta,\yd)$ and $\alpha = \alpha(\delta,\yd)$, such that
\begin{equation}
 \tau \delta \leq \|F_n(x^{\delta,\alpha}_{m,n}) - \yd\| \leq \lambda\delta,
\label{discm3}
\end{equation}
holds for $x^{\delta,\alpha}_{m,n}$, a solution of
\begin{equation}
  \min\{\|F_n(x) - \yd\|^p + \alpha f_{x_0}(x)\} ~~~\text{ subject to }~~~ x \in \mathcal{D}_m.
\label{tik4}
\end{equation}
\end{df}

In the present context, all the results of the previous sections hold. However, some additional calculations should be done when $F$ is replaced by $F_m$. The main argument in the convergence analysis is based on the  existence of a diagonal subsequence converging (weakly) to an $f_{x_0}$-minimizing solution of Problem~\ref{prob1}, when the limits $\delta \rightarrow 0$, $m,n\rightarrow \infty$ are taken.

More precisely, when $\delta > 0$ is fixed, the limit $n \rightarrow \infty$ is taken and the discrepancy principle \eqref{discm3} holds true for every $n$. Then, we can find a sequence of minimizers $\{x^{\delta,\alpha}_{m,n}\}_{n,\in\N}$, converging weakly to some minimizer of (\ref{tik2}), satisfying \eqref{discm2}. By Proposition~\ref{pr:conv}, if we also take the limit $m \rightarrow \infty$, the resulting sequence has a weakly convergent subsequence with limit satisfying the continuous version of the Morozov discrepancy principle presented in \cite{anram}. For this reason, we can always assume the existence of a diagonal subsequence converging (weakly) to an $f_{x_0}$-minimizing solution of Problem~\ref{prob1} when $\delta \rightarrow 0$.

The proof of these results in the specific example of local volatility calibration by Tikhonov regularization can be found in Section 4 of \cite{acz2013a}.

\section{An Alternative Discrepancy Principle}\label{sec:addendumb}

In general, Assumption~\ref{ass4} does not hold for nonlinear forward operators. See \cite[Remark~4.7]{schu}. More precisely, one of the inequalities of the discrepancy principle~\eqref{discm2} is not satisfied with prescribed constants $1 < \tau \leq \lambda$ or $1 < \tau_1 \leq \tau_2$. Thus, as an alternative, whenever  ensuring \eqref{discm2} is not possible, we base our choice of $\alpha$, for a fixed $m$, on the sequential discrepancy principle, presented in \cite{ahm}. It goes as follows:

\begin{df}[Sequential Morozov Criteria]
For prescribed $\tilde{\tau}>1$, $\alpha_0 > 0$ and $0<q<1$, we choose $k \in \N$ such that $\alpha_k := q^k\alpha_0$ satisfies the discrepancy
\begin{equation}
\|F(x^\delta_{m,\alpha_{k}}) - y^\delta \| \leq \tilde{\tau}\delta < \|F(x^\delta_{m,\alpha_{k-1}}) - y^\delta \|,
\label{seqmorozov}
\end{equation}
for some $x^\delta_{m,\alpha_{k}} \in M_{\alpha_k,m}$ and $x^\delta_{m,\alpha_{k-1}} \in M_{\alpha_{k-1},m}$.
\end{df}

The existence of $\alpha$ and $m$ satisfying the discrepancy \eqref{seqmorozov}, follows directly from Proposition~\ref{pr7} and by assuming that $m$ is sufficiently large. More precisely, we can replace, for instance, $\tilde{\tau}\delta$ in \eqref{seqmorozov} by $(1+\epsilon)(\gamma_m+\delta)$. Then, the estimate
$$
\|F(x^\delta_{m,\alpha_{n}}) - y^\delta \| \leq (\tilde{\tau}-\epsilon)(\gamma_m+\delta) < \|F(x^\delta_{m,\alpha_{n-1}}) - y^\delta \|
$$
always hold by Proposition~\ref{pr7}, for every fixed $m$ and $\epsilon \in (0,\tilde{\tau}-1)$. Thus, for a sufficiently large $m$, it follows that $\tilde{\tau}\delta \approx (\tilde{\tau}-\epsilon)(\gamma_m+\delta)$.

Theorems~\ref{prop:conv1} and \ref{theo:conv} remain valid if the discrepancy principle~\ref{discm2} is replaced by the sequential discrepancy principle \eqref{seqmorozov}. This follows by noting that, whenever the lower inequality in the discrepancy principle \eqref{discm2} holds, we can replace it by $\tilde{\tau} \leq \|F(x^\delta_{m,\alpha/q}) - y^\delta\|$, where $\alpha$ satisfies the lower inequality of the sequential discrepancy principle \eqref{seqmorozov} and $\alpha/q$ satisfies the upper one. See \cite[Section~3]{ahm}.

The discrepancy principle~\eqref{discm2} is always preferable, since its lower inequality implies that $\|F(x^\delta_{m,\alpha_{n}}) - y^\delta \| \geq \tau\delta$. This avoids the Tikhonov solutions to over fit and to reproduce noise. On the other hand, the same conclusion is not necessarily true if the sequential discrepancy principle~\eqref{seqmorozov} is used.

When using the sequential discrepancy principle~\eqref{seqmorozov}, it is not alway possible to achieve the rate of convergence $D_{\xi^\dagger}(x^{\delta}_{m,\alpha},ax^\dagger) = \mathcal{O}(\delta)$. The technical point is that, if $\alpha$ satisfies the lower inequality of \eqref{seqmorozov}, then the estimate $f_{x_0}(x^{\delta}_{m,\alpha}) - f_{x_0}(x^\dagger) \leq 0$ does not necessarily holds. Such estimate holds for $\alpha/q$, instead. An additional condition for achieving the convergence rate $D_{\xi^\dagger}(x^{\delta}_{m,\alpha},ax^\dagger) = \mathcal{O}(\delta)$ with the sequential discrepancy principle~\eqref{seqmorozov} is to assume that $\alpha = \mathcal{O}(\delta)$. For a more detailed discussion about convergence rates under the sequential discrepancy principle \eqref{seqmorozov}, see \cite[Section~4]{ahm} and \cite{hoffmathe}.

\section{Numerical Examples}\label{sec:numerics}

We shall now illustrate the theoretical results of the previous sections with some numerical examples based on the calibration of a diffusion coefficient in a parabolic problem. See \cite{acz2013a,crepey,acpaper,eggeng}. More precisely, let $a_1,a_2 \in \mathbb{R}$ be scalar constants such that $0 < a_1 \leq a_2 < +\infty$ and let $a_0 \in \he$ be fixed. Define the set
$$
Q := \{a \in a_0 + \he : a_1\leq a \leq a_2\}.
$$
Assuming that the data $u$ was generated by the following parabolic problem:
\begin{equation}
\left\{
\begin{array}{rcll}
\displaystyle\frac{\partial u}{\partial \tau} - 
a(\tau,y)\left(\frac{\partial^2 u}{\partial y^2} - \frac{\partial u}{\partial y}\right) - 
b\frac{\partial u}{\partial y} &=& 0 & \tau > 0, ~y \in \mathbb{R}\\
\\
u(\tau = 0,K) &=& \max\{0,1-\text{e}^y\}, & \text{for}~ y \in \mathbb{R},\\
\\
\displaystyle\lim_{y\rightarrow +\infty}u(\tau,y) & = & 0,&\text{for }~ \tau > 0,\\
\\
\displaystyle\lim_{y\rightarrow 0^-\infty}u(\tau,y) & = & 1,&\text{for }~ \tau > 0,
\end{array}
\right.
\label{dup1}
\end{equation}
\noindent our problem is to find the diffusion parameter $a\in Q$.

\noindent We define the forward operator by:
$$
\begin{array}{rcl}
 F: Q \subset\he&\longrightarrow & L^2(\mathbb{R}_+\times\mathbb{R})\\
   a &\longmapsto & u(a) - u(a_0),
\end{array}
$$
with $a_0 \in Q$ fixed and {\it a priori} chosen. The choice of $\he$ is justified in \cite{acpaper,eggeng}.

The forward operator under consideration fulfills the hypotheses of the previous sections theorems. Here, we shall implement numerically the Tikhonov regularization for this specific problem with synthetic data. For the technical details, see \cite{acz2013a,crepey,acpaper,eggeng}.

In the calibration we take as true (known) diffusion coefficient the following:
\begin{equation}
\sigma(\tau,y) = \left\{
\begin{array}{ll}
\displaystyle\frac{2}{5}-\frac{4}{25}\text{e}^{-\tau/2}\cos\left(\displaystyle\frac{4\pi y}{5}\right),& \text{ if } -2/5 \leq y \leq 2/5\\
\\
2/5,& \text{ otherwise,}
\end{array} \right.
\label{vol}
\end{equation}
and set $a = \sigma^2/2$.
We also assume that $b = 0.03$ in Equation~\eqref{dup1}.

We illustrate the discrepancy principle and the convergence-rate results of the previous sections by changing the noise and discretization levels.

The forward problem defined in Equation~\eqref{dup1} as well as its adjoint, arising in the evaluation of the gradient of the Tikhonov functional are numerically solved in the domain $D = [0,1]\times[-5,5]$ by the Crank-Nicolson scheme, 
\begin{multline}
u^{n+1}_{m} - \displaystyle\frac{1}{2}\eta a^{n+1}_{m}(u^{n+1}_{m+1} - 2u^{n+1}_{m} +u^{n+1}_{m-1}) + \frac{1}{4}\alpha (a^{n+1}_{m}-b)(u^{n+1}_{m+1} - u^{n+1}_{m-1})\\ =
u^{n}_{m} + \displaystyle\frac{1}{2}\eta a^{n}_{m}(u^{n}_{m+1} - 2u^{n}_{m} +u^{n}_{m-1}) - \frac{1}{4}\alpha (a^{n}_{m}-b)(u^{n}_{m+1} - u^{n}_{m-1}).
\label{cns}
\end{multline}
with the boundary conditions:
$$
\displaystyle\lim_{y\rightarrow-5}u(\tau,y) = 1 ~~\text{ and }~~ 
\displaystyle\lim_{y\rightarrow 5}u(\tau,y) = 0
$$
See \cite{acz2013a,vvla2,acpaper}.

We generate the data as follows:\\
On a given mesh, we numerically solve the Cauchy problem of Equation~\eqref{dup1} with the diffusion coefficient given in Equation~\eqref{vol}, which is evaluated on the same mesh. We add a zero-mean Gaussian noise with standard deviation $0.01$ to this numerical solution and interpolate the resulting data in a coarser mesh. We then use this data to calibrate the corresponding diffusion coefficient.

If $\Delta \tau$ and $\Delta y$ denote the time and space mesh sizes, respectively, the calibration of the diffusion coefficient is numerically solved with different values of $\Delta \tau$, $\Delta y$ and the regularization parameter $\alpha$, until the discrepancy principle
\begin{equation}
\tau \delta \leq \|u(a^{\delta}_{m,\alpha}) - u^\delta\| \leq \lambda \delta,
\label{discrepancy_num} 
\end{equation}
is satisfied, where $u^\delta$ is the noisy data and $\delta >0$ is the noise level. Note that, by the definition of $F$, $u(a) - u^\delta = F(a) - (u^\delta - u(a_0) )$. Thus, instead of using $F(a^{\delta}_{m,\alpha})$ in the discrepancy principle of Equation~\eqref{discrepancy_num}, we simply use $u(a^{\delta}_{m,\alpha})$, with no loss of generality.

We use this data to calibrate the diffusion coefficient by Tikhonov regularization with the smoothing penalization:
$$
f_{a_0}(a) = \beta_1\|a-a_0\|^2_{L_2(D)} + \beta_2\|\partial_y (a-a_0)\|^2 + \beta_3\|\partial_\tau (a-a_0)\|^2,
$$
with $\beta_1 = 0.5$, $\beta_2 = 0.25 \Delta y$, $\beta_3 = 0.25 \Delta \tau$ and $a_0 \equiv 0.08$.

The minimization of the Tikhonov functional is performed recursively by the gradient method. More precisely, if $a^{k}$ denotes the diffusion coefficient at the $k$th iteration, the next step is given by
$$
a^{k+1} = a^{k} - \lambda_k \nabla \mathcal{F}^{u^\delta}_{\alpha,a_0}(a^{k}),
$$
until the Morozov discrepancy principle is satisfied or the maximum number of iterations is reached or yet the relative change in the residual is less than $1.0\times 10^{-4}$. We base the choice of the step-length $\lambda_k$ on the Wolfe rule. The algorithm is initialized with the step length $\lambda_k^{0} = \|\mathcal{F}^{u^\delta}_{\alpha,a_0}(a^{k-1})\|^2/\|\mathcal{F}^{u^\delta}_{\alpha,a_0}(a^{k})\|^2$. See Algorithm 3.5 and Algorithm 3.6 in Chapter III of \cite{nocedal}. The parameters used in the Wolfe conditions are $c_1 = 10^{-8}$ and $c_2 = 0.95$. The iterations begun with $a^0 = a_0 \equiv 0.08$.

The data is generated with step sizes $\Delta \tau = 0.0025$ and $\Delta y = 0.01$ and the coarser grid is given by the step lengths $\Delta \tau = 0.02$ and $\Delta y = 0.1$. 
In the numerical solution of the inverse problem, Equation~\eqref{dup1} is numerically solved in the same mesh we interpolate the data, i.e., we use $\Delta \tau = 0.02$ and $\Delta y = 0.1$ in both cases. We vary the mesh used to evaluate the diffusion coefficient in order to highlight the discrepancy principle \eqref{discrepancy_num}. The step sizes used in the tests were the following:
$$
\Delta \tau = 0.1, 0.08, 0.07, 0.06, 0.05, 0.04, 0.03, 0.02, 0.01, 0.0075, 0.005, 0.0025
$$
and
$$
\Delta y = 0.25, 0.22, 0.20, 0.17, 0.15, 0.13, 0.11 ,0.1, 0.05, 0.04, 0.02, 0.01.
$$
Figures~\ref{test1} and \ref{test2} presents the residual and the error estimates associated to the regularized solutions for the above meshes, respectively.

We stress that, in the present set of examples, we chose $\tau = 1.025$ and $\lambda = 1.125$ in the discrepancy principle \eqref{discrepancy_num} and the illustration of this discrepancy principle can be found in Figure~\ref{test1}.

If $\tilde{u}$ and $P_{n_0}(\tilde{u} + e)$ denote the full noiseless data and the noisy data in the coarser grid with noise $e$, respectively, then the noise level can be estimated by:
$$
\delta = \|\tilde{u} - P_{n_0}(\tilde{u} + e)\|_{L^2(D)} = 
\left(\displaystyle\int_0^1\int_{-5}^{5}|\tilde{u}(\tau,y) - P_{n_0}(\tilde{u} + e)(\tau,y)|^2dy d\tau  \right)^{1/2}.
$$
This integral is solved by the 2D-Simpson's rule and the data was interpolated linearly.

\begin{figure}[ht]
\centering
\includegraphics[width=0.5\textwidth]{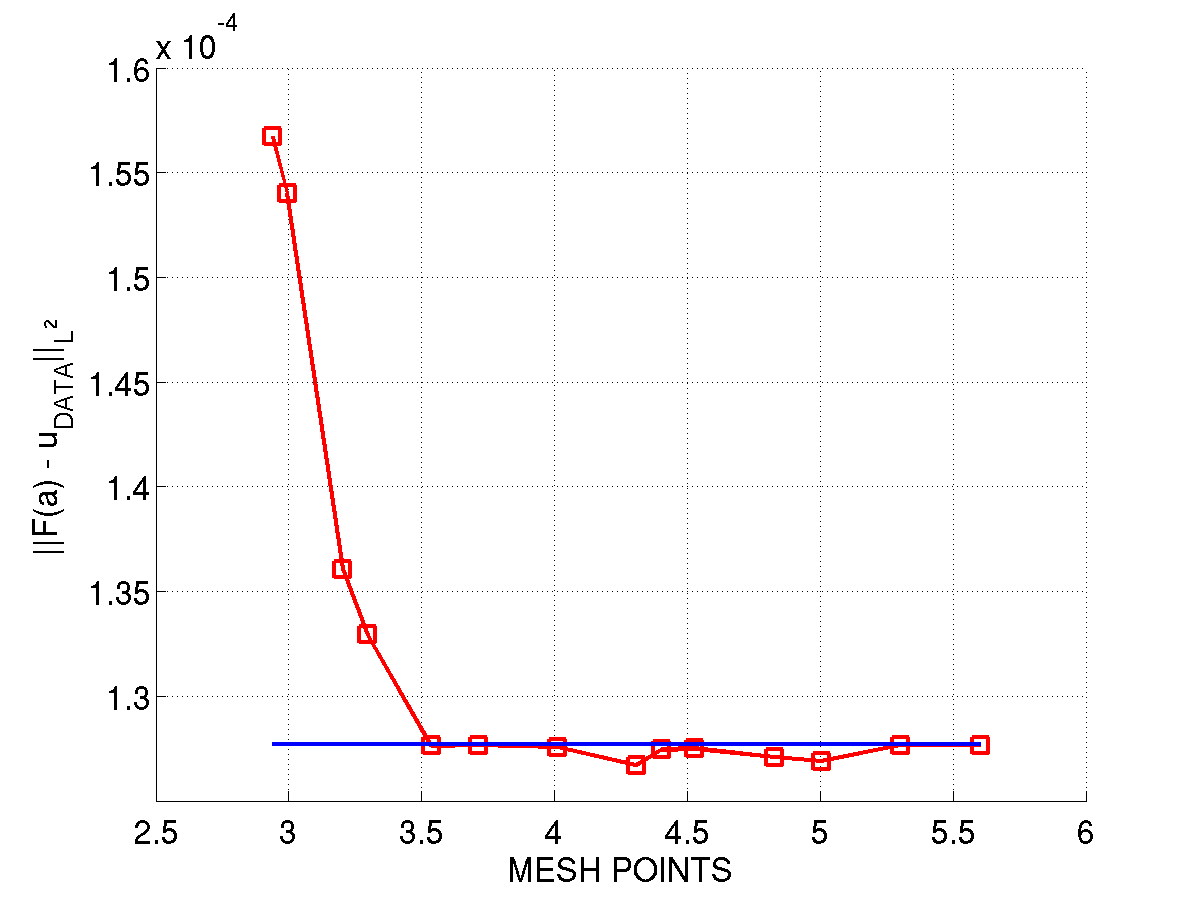}
\caption{Evolution of the residual as a function of the number of mesh points. We choose the regularization parameter presenting lower residual. In the presence of noise, some discretization levels in the domain satisfy the discrepancy principle. Compare it to the error estimation in Figure~\ref{test2}. The horizontal line corresponds to $\lambda\delta$.}
\label{test1}
\noindent 
\end{figure}

\begin{figure}[ht]
\centering
\includegraphics[width=0.5\textwidth]{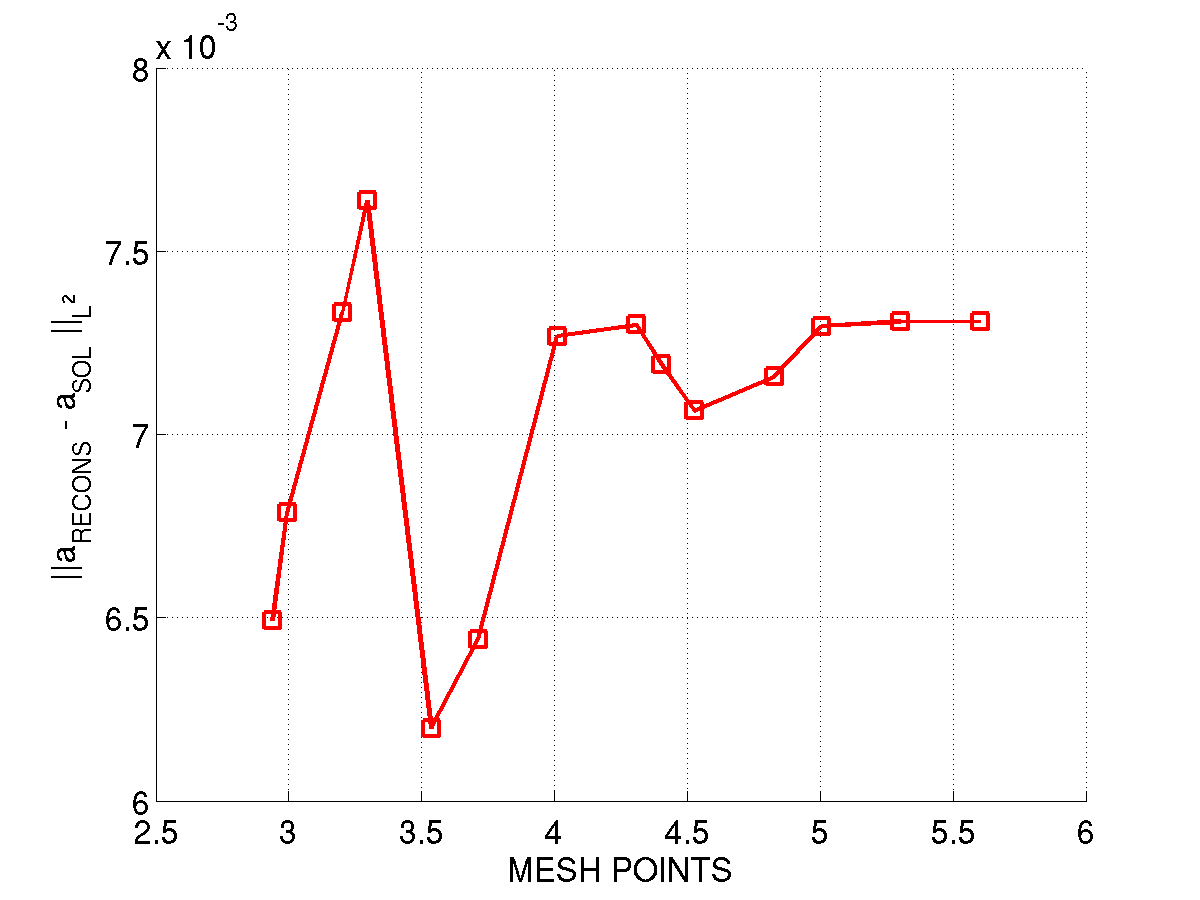}
\caption{Evolution of the $L^2$-error. In the presence of noise, its minimum is attained for a coarser mesh satisfying the discrepancy principle of Equation~\eqref{discrepancy_num}.}
\label{test2}
\noindent 
\end{figure}

\begin{figure}[ht]
\centering
\includegraphics[width=0.33\textwidth]{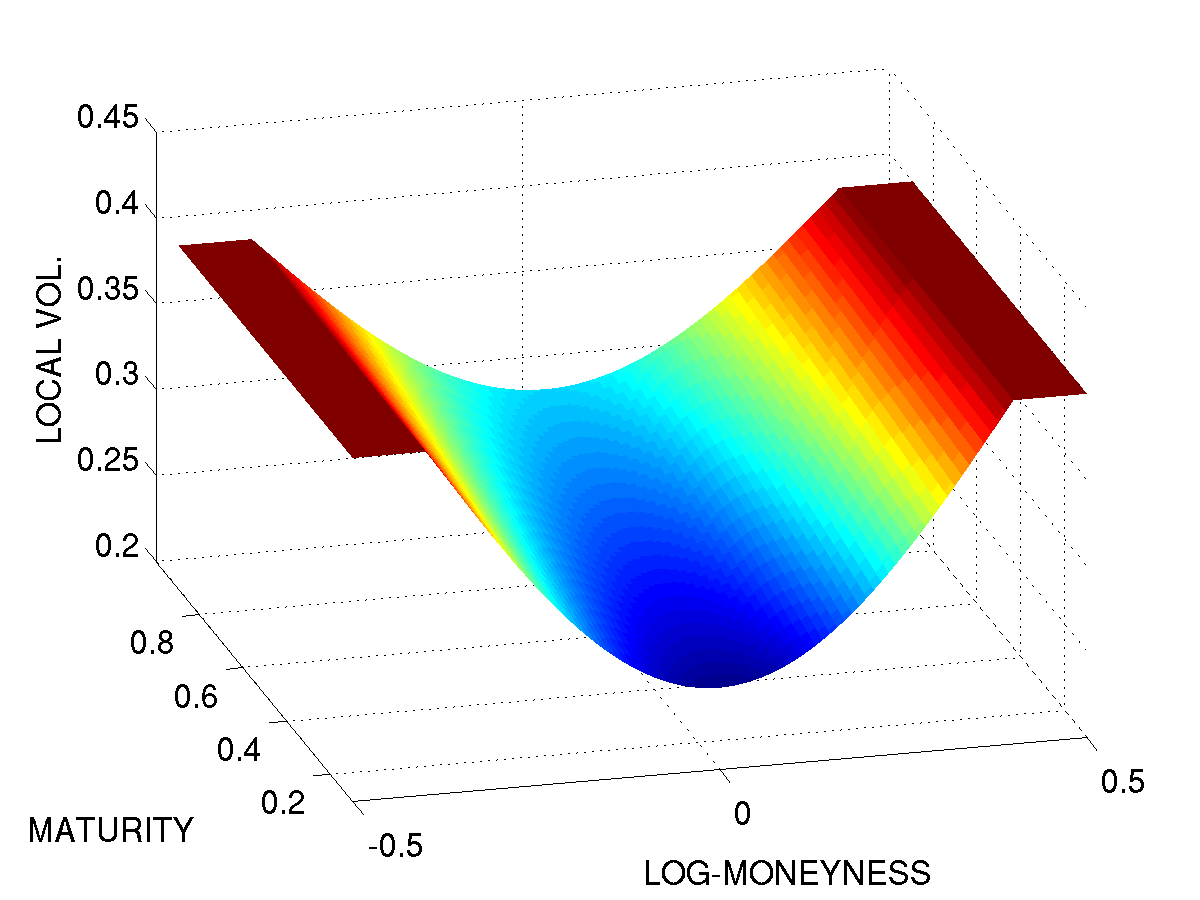}\hfill 
\includegraphics[width=0.33\textwidth]{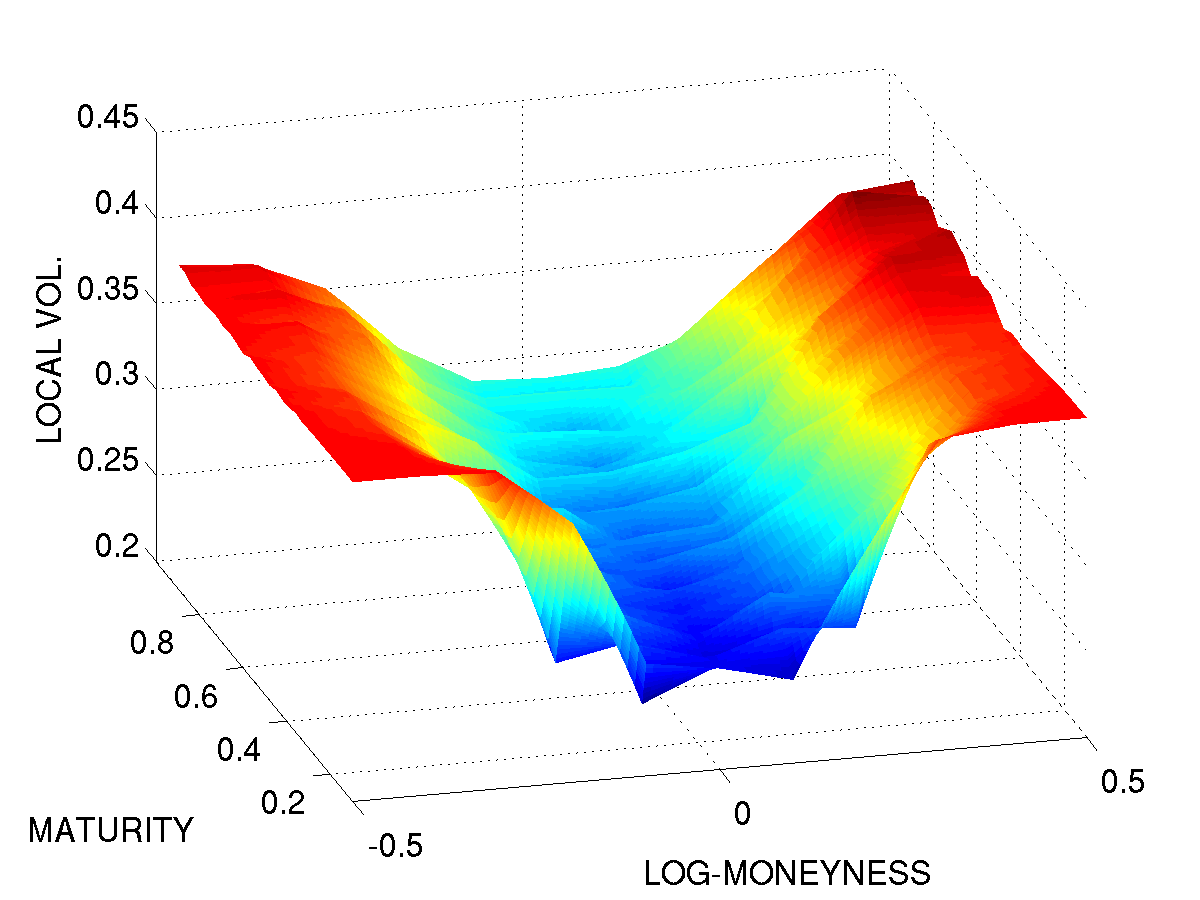}\hfill
\includegraphics[width=0.33\textwidth]{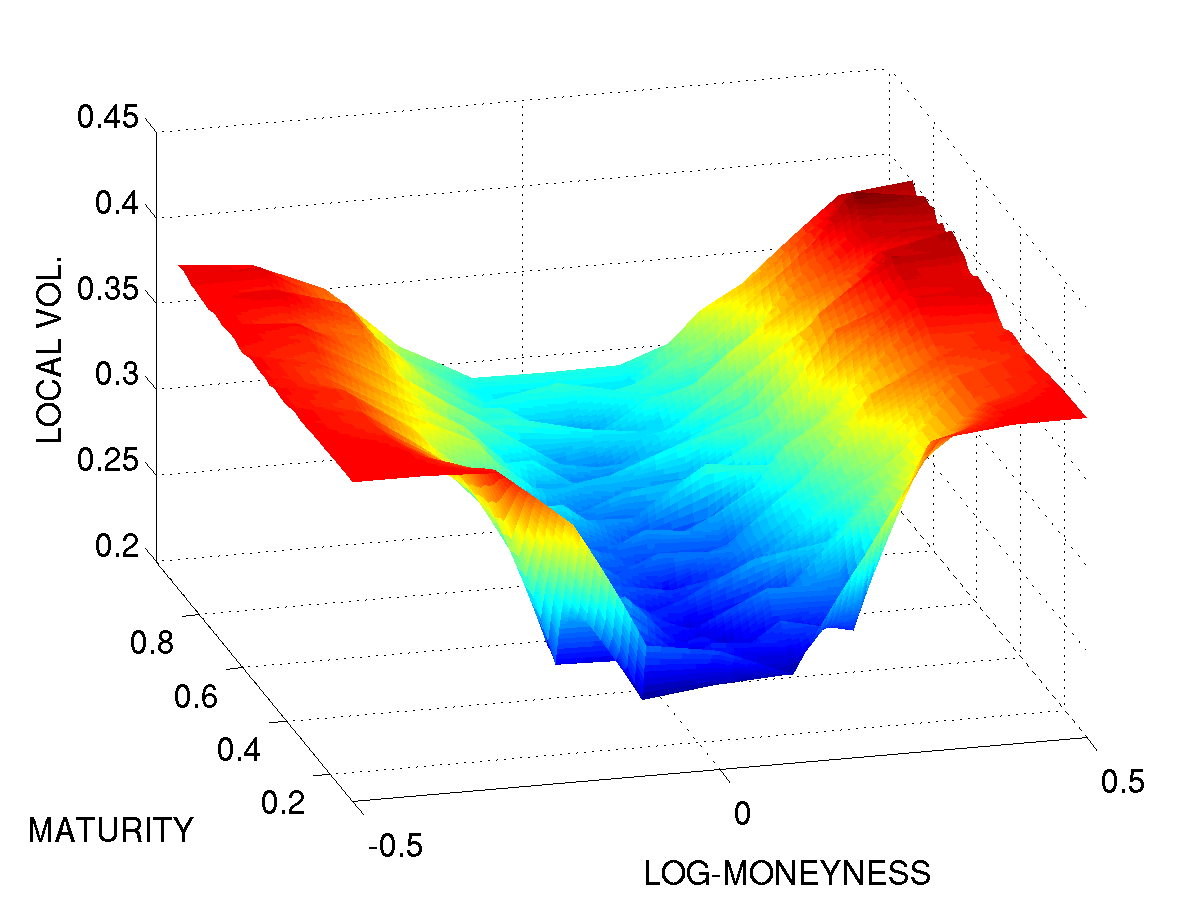}
\caption{Left: original surface. Center and right: reconstructions corresponding to the first and second points satisfying the discrepancy principle of Figure~\ref{test1}, respectively.}
\label{test3}
\noindent 
\end{figure}

\begin{figure}[ht]
\centering
\includegraphics[width=0.33\textwidth]{original}\hfill 
\includegraphics[width=0.33\textwidth]{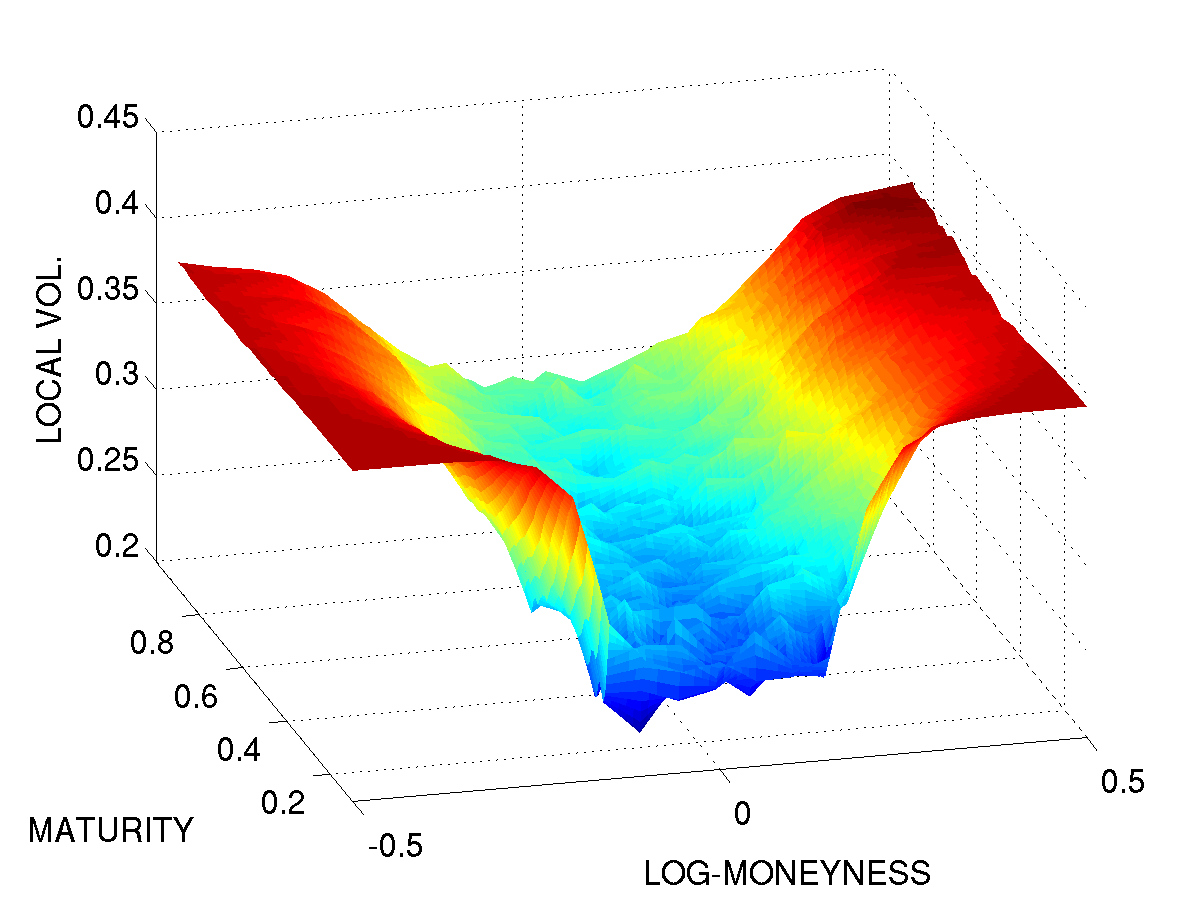}\hfill
\includegraphics[width=0.33\textwidth]{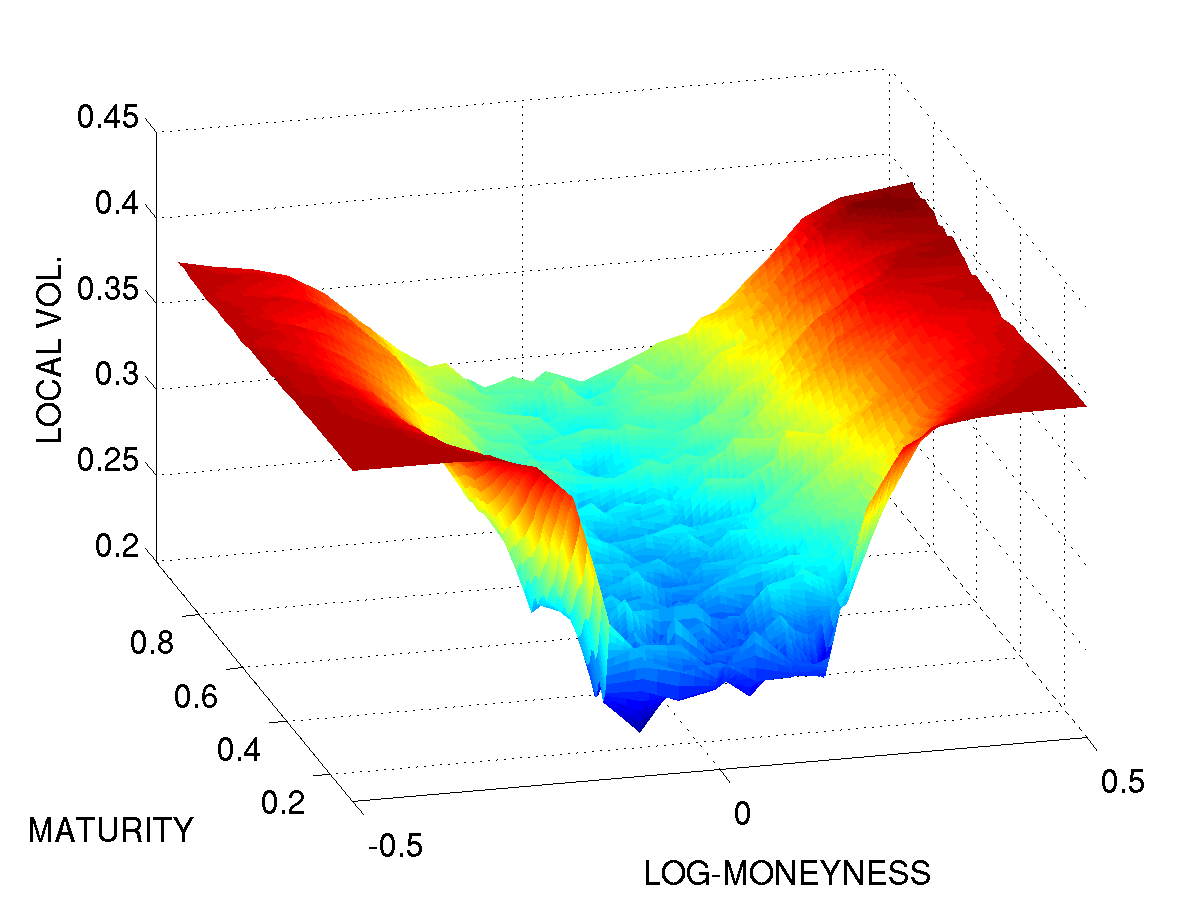}
\caption{Left: original surface. Center and right: reconstructions satisfying the discrepancy principle of Figure~\ref{test1}.}
\label{test4}
\noindent 
\end{figure}
The values of the regularization parameter used in the present test were:
$$
\beta = 0.25,~ 0.10,~ \sqrt{\delta},~ 0.01,~ 0.006,~ \delta,~ 0.001,~ 5.0 \times 10{-4},~ 1.0\times 10^{-4},~ 5 \times 10^{-5},~ \delta^2,~ 0.
$$
In Figures~\ref{test1}, \ref{test2}, \ref{test3} e \ref{test4}, we have chosen the reconstructions with regularization parameter presenting the lowest residual satisfying the discrepancy principle of Equation~\eqref{discrepancy_num}.

We also calculate the $L^2$-error, i.e., the $L^2(D)$ distance between the regularized solution and the original diffusion coefficient. The resulting $L^2$-error for the regularized solutions used in Figure~\ref{test1} can be found in Figure~\ref{test2}. Note that, reconstructions with coarser meshes satisfying the discrepancy principle of Equation~\eqref{discrepancy_num} presented satisfactory $L^2$-error estimates, illustrating the reliability of its use for finding the appropriate discretization level in the domain and the regularization parameter.

Figures~\ref{test3} and \ref{test4} present reconstructions satisfying the discrepancy principle of Equation~\eqref{discrepancy_num}. Note that, the reconstructions with coarser grid satisfying the discrepancy in Figure~\ref{test1} presented better $L^2$-error estimates. Moreover, the surfaces displayed in Figure~\ref{test3} are smoother than those of Figure~\ref{test4}.

\section{Conclusions}
Finding appropriate discretization levels is a well known challenge when solving Tikhonov-type regularization problems. In this work, we have shown that the Morozov discrepancy principle could also be used to find it appropriately. Since we are working in a discrete setting, some additional assumptions ought be made in order to establish theoretical results. 

Under the above mentioned discrepancy-based choices, we also presented a convergence analysis with convergence rates in terms of the noise level. In addition, we presented some guidelines on how to apply these results when the forward operator is replaced by a discrete approximation. We also apply the sequential discrepancy principle given by Equation~\eqref{seqmorozov}, for this discrete setting.

A numerical example illustrated the discrepancy principle when the noise level and the  discretization level of the forward operator are kept fixed and the discretization level in the domain is varied.

Summing up, Morozov principle is a robust rule for determining appropriately the regularization parameter and discretization levels in Tikhonov regularization.

\addcontentsline{toc}{section}{Acknowledgements}
\section*{Acknowledgements}
V.A. and J.P.Z. acknowledges and thanks CNPq, Petroleo Brasileiro S.A. and Ag\^encia Nacional do Petr\'oleo for the financial support during the time when this work was developed. 
A.D.C. acknowledges and thanks the financial support from CNPq through grant 200815/2012-1 and FAPERGS through grant 0839 12-3.
 J.P.Z. also acknowledges and thanks the financial support from CNPq through grants 302161/2003-1 and
474085/2003-1, and from FAPERJ through the programs {\em Cientistas do Nosso Estado}.


\appendix
\section{Regularizing Properties of the Discrete Morozov's Principle}\label{app:morozov}

\begin{theo}[Convergence]
The regularizing parameter $\alpha = \alpha(\delta,\yd,\gamma_m)$ obtained through the discrepancy principle of Definition~\ref{def6} satisfies the limits
\begin{equation}
\displaystyle\lim_{\delta,\gamma_m \rightarrow 0+}\alpha(\delta,\yd,\gamma_m) = 0
~~~\text{       and       }~~~
\displaystyle\lim_{\delta,\gamma_m \rightarrow 0+}\frac{(\delta + \gamma_m)^p}{\alpha(\delta,\yd,\gamma_m)} = 0.
\label{eq:estimates1}
\end{equation}
\label{tma}
\end{theo}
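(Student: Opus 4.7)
The plan is to derive both limits from a single master inequality obtained by coupling Tikhonov optimality with the lower bound of the discrepancy principle. By minimality of $\xd$ in $\mathcal{D}_m$ with $P_m\xdag$ as competitor, together with the triangle inequality,
\begin{equation*}
\|F(\xd)-\yd\|^p + \alpha f_{x_0}(\xd) \;\leq\; (\delta+\gamma_m)^p + \alpha f_{x_0}(P_m\xdag).
\end{equation*}
Inserting the lower bound $\|F(\xd)-\yd\|\geq \tau_1(\delta+\gamma_m)$ from Definition~\ref{def6} on the left yields the master estimate
\begin{equation*}
(\tau_1^p-1)(\delta+\gamma_m)^p \;\leq\; \alpha\bigl[f_{x_0}(P_m\xdag)-f_{x_0}(\xd)\bigr].
\end{equation*}

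For the first limit, I argue by contradiction: suppose that along a sequence $(\delta_k,\gamma_{m_k})\to 0$ one has $\alpha_k:=\alpha(\delta_k,y^{\delta_k},\gamma_{m_k})\geq c>0$. The Tikhonov inequality bounds $\{f_{x_0}(x_k)\}$, so Assumption~\ref{ass00} delivers a subsequence with $x_k\rightharpoonup\tilde{x}\in\domainf$; the upper bound $\|F(x_k)-y^{\delta_k}\|\leq \tau_2(\delta_k+\gamma_{m_k})\to 0$ combined with weak continuity of $F$ (Assumption~\ref{ass0}) gives $F(\tilde{x})=y$. Setting $\underline{\alpha}:=\liminf \alpha_k\geq c$ and passing to $\liminf$ in the Tikhonov inequality against an arbitrary competitor $x\in\domainf$ (approximated strongly by $x_k^{*}\in\mathcal{D}_{m_k}$) shows that $\tilde{x}$ minimizes $\|F(\cdot)-y\|^p+\underline{\alpha}f_{x_0}(\cdot)$ over $\domainf$. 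The convexity argument from Theorem~\ref{theo:conv}(ii), together with $f_{x_0}(x_0)=0$ from \eqref{condm} and Assumption~\ref{ass3}, forces $\tilde{x}=x_0$, and hence $F(x_0)=y$, contradicting the standing hypothesis $\|F(P_m x_0)-\yd\|>\tau_2(\delta+\gamma_m)$ underlying Proposition~\ref{pr7}.

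For the second limit, I use the master estimate to reduce the claim to $\limsup\bigl[f_{x_0}(P_m\xdag)-f_{x_0}(\xd)\bigr]\leq 0$. Strong convergence $P_m\xdag\to\xdag$ together with continuity of $f_{x_0}$ on the interior of its effective domain gives $f_{x_0}(P_m\xdag)\to f_{x_0}(\xdag)$. For any weakly convergent subsequence of $\{\xd\}$ with limit $\tilde{x}$, the upper discrepancy bound and weak continuity of $F$ again yield $F(\tilde{x})=y$, so $f_{x_0}(\tilde{x})\geq f_{x_0}(\xdag)$ because $\xdag$ is $f_{x_0}$-minimizing. Weak lower semi-continuity of $f_{x_0}$ then gives $\liminf f_{x_0}(\xd)\geq f_{x_0}(\tilde{x})\geq f_{x_0}(\xdag)$, and combining these two facts produces the desired $\limsup$ bound; non-negativity of the left-hand side of the master estimate finishes the argument. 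The main obstacle is the first limit: one must (i) perform a careful diagonal approximation of an arbitrary $x\in\domainf$ by elements of $\mathcal{D}_{m_k}$ to identify $\tilde{x}$ as a minimizer of the noiseless limit Tikhonov functional, and (ii) close the contradiction via the convexity trick of Theorem~\ref{theo:conv}(ii), which leans essentially on Assumption~\ref{ass3} to pin down $\tilde{x}=x_0$.
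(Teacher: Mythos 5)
Your proposal is correct and follows essentially the same route as the paper's proof: the master inequality obtained by testing Tikhonov optimality with $P_m\xdag$ and inserting the lower discrepancy bound, a contradiction argument for $\alpha\to 0$ via the noiseless limiting Tikhonov minimizer, the convexity/condition \eqref{condm}/Assumption~\ref{ass3} trick forcing $\tilde{x}=x_0$, and the nondegeneracy of $x_0$ with respect to the data; and the second limit from the master estimate once $\limsup f_{x_0}(\xd)\leq f_{x_0}(\xdag)\leq\liminf f_{x_0}(\xd)$ is established. The only cosmetic difference is that you bound $\liminf f_{x_0}(\xd)$ directly by $f_{x_0}(\xdag)$ instead of first proving $f_{x_0}(x_l)\to f_{x_0}(\tx)$ with $f_{x_0}(\tx)=f_{x_0}(\xdag)$, which is the same mechanism as in the paper.
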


\begin{proof} 
Consider the sequences $\{\delta_k\}_{k\in\N}$ and $\{\gamma_{m_k}\}_{k\in\N}$ converging monotonically to zero. Define the sequence $\{\alpha_k\}_{k\in\N}$ by setting $\alpha_k$ to be the regularization parameter $\alpha(\delta_k,\gamma_{m_k})$ satisfying Definition~\ref{def6} for each $k$. Thus, for each $\alpha_k$ we can find $x_k = x^{\delta_k}_{m_k,\alpha_k}$, a solution of Problem~\ref{tik2}. We thus define the sequence $\{x_k\}_{k\in\N}$. By the pre-compactness of the level sets of the Tikhonov functional, it follows that $\{x_k\}_{k\in\N}$ has a weakly convergent subsequence, denoted by $\{x_l\}_{l\in\N}$, with weak limit $\tx \in \domainf$.

\noindent By the weak lower semi-continuity of the norm and the weak continuity of $F$, the following estimates hold:
$$
\|F(\tx)-y\|\leq\displaystyle\liminf_{l\rightarrow\infty}\|F(x_l)-y^{\delta_{l}}\| + \delta_l \leq \lim_{l\rightarrow\infty}(\tau_2+1)(\gamma_{m_l}+\delta_l) = 0.
$$
Note that in the above estimates we have used $l$ instead of $k_l$ to easy notation. Note also that, $\tx$ is a least-square solution of Problem~\ref{prob1}. 

\noindent We also have the estimates:
$$
\begin{array}{rcl}
 \tau_1(\delta_l + \gamma_{m_l})^p + \alpha_l f_{x_0}(x_l) &\leq& 
(\delta_l + \gamma_{m_l})^p + \alpha_l f_{x_0}(P_{m_l}\xdag).
\end{array}
$$
\noindent Since $\tau_1 > 1$, it follows that $f_{x_0}(x_l) \leq f_{x_0}(P_{m_l}\xdag)$, which implies that $f_{x_0}(\tx) \leq f_{x_0}(\xdag)$.
Hence, $\tx$ is an $f_{x_0}$-minimizing solution of Problem~\ref{prob1}.

\noindent Assume that there exists $\overline{\alpha}>0$ and a subsequence $\{\alpha_{l_n}\}_{n\in\N}$ such that $\alpha_{l_n} \geq \overline{\alpha}$. Then, take the respective subsequence of minimizers $\{x_{l_n}\}_{n\in\N}$.

\noindent Define the sequence of minimizers $\{\overline{x_n}\}_{n\in\N}$, with $\overline{x_n} := x^{\delta_{l_n}}_{m_{l_n},\alpha_{l_n}}$. Since $L$ is non-decreasing, it follows that
$$
\|F(\overline{x_n})-y^{\delta_{l_n}}\| \leq \|F(x_{l_n}) - y^{\delta_{l_n}}\|\leq \tau_2(\delta + \gamma_m) \rightarrow 0.
$$
Note that, since $\xdag,\tx$ are $f_{x_0}$-minimizing solutions of Problem~\ref{prob1}, it follows that $f_{x_0}(\xdag) = f_{x_0}(\tx)$.

On the other hand, 
$$
\begin{array}{rcl}
\displaystyle\limsup_{n\rightarrow\infty}\overline{\alpha}f_{x_0}(\overline{x_n}) &\leq&
\overline{\alpha} f_{x_0}(\xdag).
\end{array}
$$
By the weak pre-compactness of the level sets $\mathcal{M}_{\alpha}(\rho)$, 
it follows that  $\{\overline{x_n}\}_{n\in\N}$ has a weakly convergent subsequence with limit $\overline{x}$. By the above estimates, it follows that $\overline{x}$ is an $f_{x_0}$-minimizing solution for Problem~\ref{tik2}. Denoting this subsequence by $\{\overline{x_n}\}_{n\in\N}$, it follows by the above estimates that
$$
\begin{array}{rcl}
 \|F(\overline{x}) - y\|^p + \overline{\alpha} f_{x_0}(\overline{x}) &\leq& \displaystyle\liminf_{n\rightarrow\infty}\left( \|F(\overline{x_n})-y^{\delta_{l_n}}\|^p + \overline{\alpha} f_{x_0}(\overline{x_n}) \right)\\
&\leq& \|F(x) - y\|^p + \overline{\alpha} f_{x_0}(x) \text{ for every } x \in X.
\end{array}
$$
Then, $\overline{x}$ is a solution of Problem~\ref{tik2} with data $y$ (noiseless) and regularization parameter $\overline{\alpha}$.

\noindent Since $f_{x_0}$ is convex and $f_{x_0}(x) = 0$ if, and only if, $x = x_0$, it follows that, for every $t \in [0,1)$ we have
$$
f_{x_0}((1-t)\overline{x} + t x_0) \leq (1-t)f_{x_0}(\overline{x}) + t f_{x_0}(x_0) = (1-t)f_{x_0}(\overline{x}).
$$
It also follows that
$$
\begin{array}{rcl}
 \overline{\alpha}f_{x_0}(\overline{x}) 
 &\leq & \|F((1-t)\overline{x} + t x_0) - y\|^p + \overline{\alpha} (1-t)f_{x_0}(\overline{x}).
\end{array}
$$
This shows that,
$$
\overline{\alpha}tf_{x_0}(\overline{x})\leq \|F((1-t)\overline{x} + t x_0) - y\|^p.
$$
Then, Assumption~\ref{ass3} implies that $f_{x_0}(\overline{x}) = 0$, and thus $\overline{x} = x_0$. This is a contradiction since
$$
\|F(x_0)-y\| \geq \|F(x_0)-\yd\|-\|\yd-y\|\geq(\tau_1 - 1)\delta>0.
$$
Therefore, the first limit in \eqref{eq:estimates1} holds.

In order to prove the second limit we proceed as follows. Since $\{x_l\}_{l\in\N}$ weakly converges to $\tx$ with $f_{x_0}(x_l)\rightarrow f_{x_0}(\tx)$, it follows that
$$
\begin{array}{rcl}
 \tau_1^p(\delta_l + \gamma_{m_l})^p + \alpha_l f_{x_0} (x_l) 
 &\leq & (\gamma_m + \delta_l)^p + \alpha_l f_{x_0}(P_{m_l}\xdag)
\end{array}
$$
The above estimate combined with the limit $f_{x_0}(P_{m_l}\xdag)\rightarrow f_{x_0}(\xdag)$ leads to
$$
(\tau_1^p-1)\displaystyle\frac{(\delta_l+\gamma_{m_l})^p}{\alpha_l} \leq f_{x_0}(P_{m_l}\xdag) - f_{x_0}(x_l)
$$
where the right hand side converges to zero when $l \rightarrow \infty$. Note that, we have used again the fact that $f_{x_0}(\xdag)=f_{x_0}(\tx)$ if $\xdag,\tx \in \mathcal{L}$.

\end{proof}

Choosing $m$ based on Equation~\eqref{def4}, we have the following corollary:

\begin{cor}[Convergence]
Let $\alpha$ satisfy Definition~\ref{def6}. Then,
\begin{equation}
 \displaystyle\lim_{\delta\rightarrow 0}\alpha(\delta,\gamma_m(\delta)) = 0 ~~\text{ and }~~ 
\displaystyle\lim_{\delta\rightarrow 0}\frac{\delta^p}{\alpha(\delta,\gamma_m(\delta))} = 0.
\label{est1a}
\end{equation}
Moreover, we have the convergence result of Theorem~\ref{tma} with this choice of $m$.
\label{conv}
\end{cor}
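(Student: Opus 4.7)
The plan is to reduce the corollary to Theorem~\ref{tma} by observing that the choice of $m$ through Equation~\eqref{def4} couples $\gamma_m$ to $\delta$ in such a way that $\gamma_m \to 0$ automatically whenever $\delta \to 0$. More precisely, the condition $\gamma_m(\delta) \leq (\lambda/\tau_2 - 1)\delta$ forces $\gamma_m(\delta) = \mathcal{O}(\delta)$, and in particular $\gamma_m(\delta) \to 0^+$ as $\delta \to 0^+$. Thus any sequence $\delta_k \to 0^+$ gives rise to a sequence $(\delta_k, \gamma_{m(\delta_k)})$ with both entries converging monotonically to zero, and Theorem~\ref{tma} applies to this sequence.

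For the first limit in \eqref{est1a}, I would simply invoke the first limit of \eqref{eq:estimates1}: along any such sequence $\delta_k \to 0^+$, we have $\alpha(\delta_k, \gamma_{m(\delta_k)}) \to 0$. For the second limit in \eqref{est1a}, I would use the trivial bound
\[
\frac{\delta^p}{\alpha(\delta,\gamma_m(\delta))} \leq \frac{(\delta+\gamma_m(\delta))^p}{\alpha(\delta,\gamma_m(\delta))},
\]
and the right-hand side tends to zero by the second limit of \eqref{eq:estimates1}.

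Finally, the convergence assertion (weak subsequential convergence of the Tikhonov minimizers $\xd$ to an $f_{x_0}$-minimizing solution of Problem~\ref{prob1}, together with $f_{x_0}(x_l) \to f_{x_0}(\xdag)$) is obtained as a direct consequence of Theorem~\ref{tma}, since the diagonal sequence with $\gamma_m$ slaved to $\delta$ via \eqref{def4} is a particular case of the joint limit considered there.

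There is no real obstacle here: once one notices that \eqref{def4} subordinates the discretization level to the noise level in such a way that $\gamma_m \to 0$ is forced by $\delta \to 0$, the corollary is essentially a specialization of Theorem~\ref{tma}. The only minor care needed is to verify that the choice $m = m(\delta)$ is actually attainable for every sufficiently small $\delta$, which is guaranteed by the defining property \eqref{def1} of the sequence $\{X_m\}$ (which ensures $\gamma_m \to 0$ as $m \to \infty$, cf.\ Lemma~\ref{lem1}) so that the required inequality $\gamma_m \leq (\lambda/\tau_2 - 1)\delta$ can always be met by taking $m$ large enough.
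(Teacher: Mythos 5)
Your proposal is correct and takes essentially the same route as the paper: both reduce the corollary to Theorem~\ref{tma} by observing that the choice \eqref{def4} forces $\gamma_m(\delta)\leq(\lambda/\tau_2-1)\delta\rightarrow 0$ as $\delta\rightarrow 0$ (attainable for small $\delta$ by Lemma~\ref{lem1}), so the diagonal limit is a special case of the joint limit $\delta,\gamma_m\rightarrow 0$ treated there. Your handling of the second limit, via the trivial bound $\delta^p/\alpha\leq(\delta+\gamma_m(\delta))^p/\alpha$ and the second limit in \eqref{eq:estimates1}, is a slightly more direct shortcut than the paper's instruction to re-run the estimates with $\tau_1(\delta+\gamma_m)$ replaced by $\tau_1\delta$ and $\delta+\gamma_m$ dominated by $(\lambda/\tau_2)\delta$, but the substance of the argument is the same.
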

\begin{proof} By Theorem~\ref{tma}, $\alpha = \alpha(\delta,\yd,\gamma_m)$ satisfies the limits:
\begin{equation}
 \displaystyle\lim_{\delta,\gamma_m\rightarrow 0}\alpha(\delta,\yd,\gamma_m) = 0 ~~\text{ and }~~ 
\displaystyle\lim_{\delta,\gamma_m\rightarrow 0}\frac{(\delta+\gamma_m)^p}{\alpha(\delta,\yd,\gamma_m)} = 0.
\label{est1}
\end{equation}
Then, following the same arguments in the proof of Equation~\eqref{eq:estimates1} in Theorem~\ref{tma} and substituting $\tau_1(\delta + \gamma_m)$ by $\tau_1\delta$ and dominating $\delta + \gamma_m$ by $\lambda/\tau_2 \delta$ based on \eqref{def4}, it follows that the limits in Equation~\eqref{est1a} hold.

Consider the sequence of positive constants $\{\delta_{k}\}_{k\in\N}$ converging monotonically to zero and define the sequence $\{m_k\}_{k\in\N}$, with $m_k := m(\delta_k,y^{\delta_k})$ satisfying \eqref{def4}. Thus, we can choose a sequence $\{x_k\}_{k\in\N}$ of solutions of Problem~\ref{tik2} with $x_k := x^{\delta_k}_{\alpha_k,m_k}$ and $\alpha_k$ satisfying Definition~\ref{def6}.
Then, the convergence of a subsequence, denoted by $\{x_l\}_{l\in\N}$, to an $f_{x_0}$-minimizing solution $\tx$ follows by similar arguments in the proof of Theorem~\ref{tma}. We just have to substitute $\tau_1(\delta + \gamma_m)$ by $\tau_1\delta$ and dominate $\delta + \gamma_m$ by $\lambda/\tau_2 \delta$ based on \eqref{def4}.
\end{proof}


Define the estimate $\eta_m := D_{\xi^\dagger}(P_m\xdag,\xdag)$. It follows from \cite[Corollary~1.2.5]{ekte} that $\eta_m\rightarrow 0$ whenever $m\rightarrow\infty$, since $P_m\xdag\rightarrow \xdag$.


\begin{theo}[Convergence Rates]
Assume that $\xd$ is a minimizer of the functional in Equation~(\ref{tik1})and the regularization parameter $\alpha = \alpha(\delta,\yd,\gamma_m)$ satisfies the discrepancy principle (\ref{def6}). Then, we have the following estimates
\begin{equation}
\|F(\xd) - \yd\| = \mathcal{O}(\delta + \gamma_m + \eta_m) ~~~\text{ and }~~~ D_{\xi^\dagger}(\xd,\xdag) = \mathcal{O}(\delta + \gamma_m + \eta_m + \phi_m).
\end{equation}
with $\xi^\dagger \in \partial f_{x_0}(\xdag)$.
 \label{mor:cr}
\end{theo}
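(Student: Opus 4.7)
\textbf{Proof plan for Theorem~\ref{mor:cr}.}

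The first estimate is immediate: by the discrepancy principle from Definition~\ref{def6}, $\|F(\xd)-\yd\|\leq \tau_2(\delta+\gamma_m)=\mathcal{O}(\delta+\gamma_m)$, so certainly $\mathcal{O}(\delta+\gamma_m+\eta_m)$. For the Bregman estimate, my plan is to exploit the minimization property against the natural comparison point $P_m\xdag\in\mathcal{D}_m$ and then use a source condition in the spirit of Assumption~\ref{ass2} (which I will treat as standing hypothesis, since without it one cannot expect a rate on $D_{\xi^\dagger}(\xd,\xdag)$).

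First, since $\xd$ is a Tikhonov minimizer in $\mathcal{D}_m$ and $P_m\xdag\in\mathcal{D}_m$, I have
$$
\|F(\xd)-\yd\|^p+\alpha f_{x_0}(\xd)\leq \|F(P_m\xdag)-\yd\|^p+\alpha f_{x_0}(P_m\xdag)\leq(\delta+\gamma_m)^p+\alpha f_{x_0}(P_m\xdag).
$$
Combining with the lower discrepancy bound $\|F(\xd)-\yd\|^p\geq \tau_1^p(\delta+\gamma_m)^p$ and $\tau_1>1$, I obtain $f_{x_0}(\xd)-f_{x_0}(P_m\xdag)\leq 0$. This is the key ``free'' bound that comes from the lower inequality in Definition~\ref{def6}.

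Next, I rewrite the Bregman distance by inserting $\pm P_m\xdag$:
$$
D_{\xi^\dagger}(\xd,\xdag)=\underbrace{f_{x_0}(\xd)-f_{x_0}(P_m\xdag)}_{\leq\,0}+D_{\xi^\dagger}(P_m\xdag,\xdag)+\langle\xi^\dagger,P_m\xdag-\xd\rangle,
$$
so that $D_{\xi^\dagger}(\xd,\xdag)\leq \eta_m+\langle\xi^\dagger,\xdag-\xd\rangle+\langle\xi^\dagger,P_m\xdag-\xdag\rangle$. The last inner product is controlled by $\|\xi^\dagger\|\,\phi_m=\mathcal{O}(\phi_m)$. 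For the middle inner product I invoke the source condition (Assumption~\ref{ass2}):
$$
\langle\xi^\dagger,\xdag-\xd\rangle\leq\beta_1 D_{\xi^\dagger}(\xd,\xdag)+\beta_2\|F(\xd)-F(\xdag)\|.
$$
Since $F(\xdag)=y$, the discrepancy principle and $\|y-\yd\|\leq\delta$ give $\|F(\xd)-F(\xdag)\|\leq \|F(\xd)-\yd\|+\|\yd-y\|\leq (\tau_2+1)\delta+\tau_2\gamma_m=\mathcal{O}(\delta+\gamma_m)$.

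Collecting everything and moving the $\beta_1 D_{\xi^\dagger}(\xd,\xdag)$ term to the left yields
$$
(1-\beta_1)\,D_{\xi^\dagger}(\xd,\xdag)\leq \eta_m+\mathcal{O}(\phi_m)+\mathcal{O}(\delta+\gamma_m),
$$
which is precisely the claimed rate $D_{\xi^\dagger}(\xd,\xdag)=\mathcal{O}(\delta+\gamma_m+\eta_m+\phi_m)$, since $\beta_1\in[0,1)$. The main obstacle I anticipate is bookkeeping the comparison between $\xdag$ and $P_m\xdag$ correctly: the Bregman distance is not symmetric, and the natural identity $f_{x_0}(P_m\xdag)-f_{x_0}(\xdag)=\eta_m+\langle\xi^\dagger,P_m\xdag-\xdag\rangle$ is what forces the $\phi_m$ term to appear alongside $\eta_m$. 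Once this decomposition is in place, the rest is a standard absorption argument made possible by $\beta_1<1$ in the source condition.
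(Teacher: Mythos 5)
Your argument is correct and follows essentially the same route as the paper's proof: minimization against $P_m\xdag$ combined with the lower discrepancy bound to get $f_{x_0}(\xd)\leq f_{x_0}(P_m\xdag)$, the Bregman decomposition producing the $\eta_m$ and $\|\xi^\dagger\|\phi_m$ terms, Assumption~\ref{ass2} to handle $\langle\xi^\dagger,\xdag-\xd\rangle$, and absorption of $\beta_1 D_{\xi^\dagger}(\xd,\xdag)$ using $\beta_1<1$. Your explicit remark that Assumption~\ref{ass2} must be taken as a standing hypothesis matches the paper, whose proof also invokes it even though the theorem statement omits it.
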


\begin{proof} The first estimate follows directly by Definition~\ref{def6}. Let Assumption~\ref{ass2} hold. Then, by Definition~\ref{def6} it follows that
$$
\begin{array}{rcl}
 \tau_1^p(\delta + \gamma_m)^p + \alpha f_{x_0}(\xd) 
 &\leq& (\delta + \gamma_m)^p + \alpha f_{x_0}(P_{m}\xdag).
\end{array}
$$
This implies that $f_{x_0}(\xd) \leq f_{x_0}(P_{m}\xdag)$ since $\tau_1 > 1$. It also implies that, for $m$ sufficiently large, since $\delta>0$ is fixed, $f_{x_0}(\xd) \leq f_{x_0}(\xdag)$.

From Assumption~\ref{ass2} and the definition of Bregman distances, we have the following estimates, with $\xi^\dagger \in \partial f_{x_0}(\xdag)$:
\begin{multline}
 D_{\xi}(\xd,\xdag) \leq f_{x_0}(\xd) - f_{x_0}(\xdag) - \langle \xi^\dagger, \xd - \xdag \rangle\\
\leq D_{\xi^\dagger}(P_{m}\xdag,\xdag) + \|\xi^\dagger\|\|P_{m}\xdag-\xdag\| + \beta_1 D_{\xi^\dagger}(\xd,\xdag) + 
\beta_2 \|F(\xd) - F(\xdag)\|\\
\leq \eta_m + \|\xi^\dagger\|\phi_m + \beta_1 D_{\xi^\dagger}(\xd,\xdag) + \beta_2(\tau_2 + 1)(\delta + \gamma_m).
\end{multline}
In other words,
\begin{equation}
D_{\xi^\dagger}(\xd,\xdag) \leq \displaystyle\frac{\eta_m + \|\xi^\dagger\|\phi_m + \beta_2(\tau_2 + 1)(\delta + \gamma_m)}{1 - \beta_1}.
\label{convrates2}
\end{equation}
\end{proof}

If $m$ satisfies \eqref{def4}, we have the following corollary:

\begin{cor}[Convergence Rates]
 Assume that Assumption~\ref{ass2} holds true, $m\in\N$ satisfy \eqref{def4} and $\alpha>0$ is chosen through Definition~\ref{def6}. Then, we have the following convergence rates:
\begin{equation}
 \|F(\xd) - \yd\| = \mathcal{O}(\delta)
\end{equation}
and
\begin{equation}
D_{\xi^\dagger}(\xd,\xdag) \leq \displaystyle\frac{(\lambda\delta)^p}{\tau_2^p(\alpha-\alpha\beta_1)} + \displaystyle\frac{\alpha\beta_2^p}{2-2\beta_1} + \displaystyle\frac{\eta_m + \beta_2\delta + \|\xi^\dagger\|\phi_m}{1-\beta_1}.
\label{est3}
\end{equation}
\label{convrates}
\end{cor}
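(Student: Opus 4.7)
The first estimate $\|F(\xd)-\yd\|\le\lambda\delta$ is an immediate reading of the upper half of the discrepancy principle~\eqref{discm2}, so all of the work lies in establishing~\eqref{est3}. The plan is to run the same variational-inequality template used in the proof of Theorem~\ref{mor:cr}, refining only the step in which the noise/discretization residual is controlled: instead of bounding $\|F(\xd)-F(\xdag)\|$ crudely by $(\tau_2+1)(\delta+\gamma_m)$, I would apply a weighted Young's inequality so that a piece of the form $\|F(\xd)-\yd\|^p/\alpha$ appears on the right and is cancelled against a negative contribution coming from Tikhonov optimality. Condition~\eqref{def4} would enter only at the very end, to convert $\delta+\gamma_m$ into $(\lambda/\tau_2)\delta$.

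Concretely, I would first combine Assumption~\ref{ass2} at $x=\xd$ with the identity $D_{\xi^\dagger}(\xd,\xdag)=f_{x_0}(\xd)-f_{x_0}(\xdag)+\langle\xi^\dagger,\xdag-\xd\rangle$ to obtain
\[
(1-\beta_1)D_{\xi^\dagger}(\xd,\xdag)\le \bigl(f_{x_0}(\xd)-f_{x_0}(\xdag)\bigr)+\beta_2\|F(\xd)-F(\xdag)\|.
\]
Since $P_m\xdag\in\mathcal{D}_m$ is admissible for Problem~\ref{tik2}, the minimality of $\xd$ yields $\alpha(f_{x_0}(\xd)-f_{x_0}(P_m\xdag))\le \|F(P_m\xdag)-\yd\|^p-\|F(\xd)-\yd\|^p$, while the definition of the Bregman distance gives $f_{x_0}(P_m\xdag)-f_{x_0}(\xdag)\le\eta_m+\|\xi^\dagger\|\phi_m$. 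Combined with the triangle inequality $\|F(\xd)-F(\xdag)\|\le\|F(\xd)-\yd\|+\delta$, these assemble into
\[
(1-\beta_1)D_{\xi^\dagger}(\xd,\xdag)\le \frac{\|F(P_m\xdag)-\yd\|^p-\|F(\xd)-\yd\|^p}{\alpha}+\eta_m+\|\xi^\dagger\|\phi_m+\beta_2\|F(\xd)-\yd\|+\beta_2\delta.
\]

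Next I would apply weighted Young's inequality to $\beta_2\|F(\xd)-\yd\|$, choosing the weight so that the resulting $\|F(\xd)-\yd\|^p$ term appears with a coefficient no larger than $1/\alpha$. In the canonical case $p=2$ this is $ab\le a^2/(2\alpha)+\alpha b^2/2$ with $a=\|F(\xd)-\yd\|$ and $b=\beta_2$; for general $p$ one uses the corresponding conjugate-exponent form. The positive $\|F(\xd)-\yd\|^p$ piece is then absorbed by the negative $\|F(\xd)-\yd\|^p/\alpha$ supplied by Tikhonov optimality and can be dropped, leaving a clean residual of shape $\alpha\beta_2^p/2$. Finally, \eqref{def4} gives $\|F(P_m\xdag)-\yd\|^p\le(\delta+\gamma_m)^p\le(\lambda\delta/\tau_2)^p$, and dividing through by $1-\beta_1$ produces exactly~\eqref{est3}.

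The delicate point is the calibration of Young's inequality: the weight must be chosen so that the $\|F(\xd)-\yd\|^p$ contribution is absorbable by the negative Tikhonov term while the dual contribution lands with the precise coefficient $\alpha/(2(1-\beta_1))$ asserted in~\eqref{est3}. For $p=2$ this is transparent through standard weighted AM/GM; for general $p$ the genuine Young residual involves $\alpha^{1/(p-1)}\beta_2^{p/(p-1)}$ rather than $\alpha\beta_2^p$, and the two coincide only when $p=2$, suggesting that the sharpest reading of~\eqref{est3} is in the Hilbert-space setting.
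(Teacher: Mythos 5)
Your argument is correct, and it is in fact more complete than what the paper offers: the paper disposes of Corollary~\ref{convrates} with the single remark that it is ``immediate'' from Theorem~\ref{mor:cr}, whose proof bounds $\beta_2\|F(\xd)-F(\xdag)\|$ crudely by $\beta_2(\tau_2+1)(\delta+\gamma_m)$ and arrives at the estimate \eqref{convrates2}; combining that with \eqref{def4} gives the $\mathcal{O}(\delta+\eta_m+\phi_m)$ rate but not literally the displayed inequality \eqref{est3}, whose characteristic $(\lambda\delta)^p/(\tau_2^p\alpha(1-\beta_1))$ and $\alpha\beta_2^p/(2-2\beta_1)$ terms can only come from retaining the negative Tikhonov residual $-\|F(\xd)-\yd\|^p/\alpha$ and spending it against a weighted Young estimate of $\beta_2\|F(\xd)-\yd\|$, exactly as you do. Your route uses only the minimality of $\xd$ over $\mathcal{D}_m$ (with competitor $P_m\xdag$), Assumption~\ref{ass2}, the triangle inequality, and \eqref{def4}; notably it does not even need the lower inequality of Definition~\ref{def6}, which the paper's Theorem~\ref{mor:cr} invokes. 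Two small points: the first assertion $\|F(\xd)-\yd\|\leq\lambda\delta$ should be attributed to the upper bound $\tau_2(\delta+\gamma_m)$ of Definition~\ref{def6} together with \eqref{def4} (which is how the paper deduces \eqref{discm2} in this setting), rather than to \eqref{discm2} directly; and your closing caveat is well taken --- the Young residual for general $p$ is of order $\alpha^{1/(p-1)}\beta_2^{p/(p-1)}$, so the constant $\alpha\beta_2^p/(2-2\beta_1)$ in \eqref{est3} is calibrated to $p=2$, a defect of the corollary as stated rather than of your derivation.
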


This is an immediate result from the previous theorem.

\begin{rem}
 If $f_{x_0}$ is $q$-coercive with respect to the norm of $X$ and $q=p$, then \eqref{est3} implies that
\begin{equation}
 \|\xd - \xdag\| \leq \left(\frac{1}{\zeta}\displaystyle\frac{(\lambda\delta)^p}{\tau_2^p(\alpha-\alpha\beta_1)} + \displaystyle\frac{\alpha\beta_2^p}{2-2\beta_1} + \frac{1}{\zeta}\displaystyle\frac{\eta_m + \beta_2\delta + \|\xi^\dagger\|\displaystyle\left(\frac{\eta_m}{\zeta}\right)^{\frac{1}{q}}}{1-\beta_1}\right)^{\frac{1}{q}}
\label{est4}
\end{equation}
\end{rem}

\begin{rem}
Note that in the proofs of the above results we have assumed the existence of the abstract quantity
$$
\gamma_m = \|F(\xd) - F(P_m \xd)\|,
$$
which is obviously unknown. However, for some classes of operators $F$, it is possible to estimate an upper bound for this quantity in terms of $I-P_m$, where $I: X\longrightarrow X$ is the identity operator. This is the case, for example, of uniformly H\"older continuous operators:
$$
\|F(x)-F(x^\prime)\| \leq C\|x-x^\prime\|^l,
$$
for every $x,x^\prime \in \domainf$. Thus, $\gamma_m \leq C\|I-P_m\|^l\|\xdag\|^l$ and there exists a constant $K>0$ such that $\|\xdag\|^l\leq K$. In this case, we can choose $m$ sufficiently large such that the inequality below is satisfied:
$$
CK\|I-P_m\|^l \leq \left(\displaystyle\frac{\lambda}{\tau_2}\right)\delta.
$$
\end{rem}

\end{document}